\newcommand\op[1]{\psi^{#1}(M; E)}
\newcommand{\clop}{\overline{\psi^{0}}(M; E)}
\newcommand{\clopn}{\overline{\psi^{-1}}(M; E)}
\newcommand{\Hom}{\operatorname{Hom}}
\newcommand{\End}{\operatorname{End}}
\newcommand{\ind}{\operatorname{ind}}
\newcommand{\CI}{\mathcal{C}^{\infty}}
\newcommand{\Ind}{\operatorname{Ind}}
\newcommand{\Prim}{\operatorname{Prim}}
\newcommand{\supp}{\operatorname{supp}}
\newcommand{\coker}{\operatorname{coker}}
\newcommand{\mfkA}{\mathfrak A}
\newcommand{\mfkJ}{\mathfrak J}
\newcommand{\CC}{\mathbb C}
\newcommand{\NN}{\mathbb N}
\newcommand{\RR}{\mathbb R}
\newcommand{\ZZ}{\mathbb Z}
\newcommand{\maB}{\mathcal B}
\newcommand{\maC}{\mathcal C}
\newcommand{\maF}{\mathcal F}
\newcommand{\maH}{\mathcal H}
\newcommand{\maK}{\mathcal K}
\newcommand{\maL}{\mathcal L}
\newcommand{\maQ}{\mathcal Q}
\newcommand{\maR}{\mathcal R}
\newcommand{\maO}{\mathcal O}
\newcommand\pa{\partial}
\newcommand\ede{\, := \,}
\newcommand\seq{\, = \,}
\newtheorem{theorem}{Theorem}[section]
\newtheorem{lemma}[theorem]{Lemma}
\newtheorem{proposition}[theorem]{Proposition}
\newtheorem{corollary}[theorem]{Corollary}
\theoremstyle{definition}
\newtheorem{definition}[theorem]{Definition}
\newtheorem{remark}[theorem]{Remark}
\newtheorem{example}[theorem]{Example}
\newcommand{\Sp}{\mathrm{Sp}}
\begin{document}

\title[Fredholm conditions]{Fredholm conditions and index for
  restrictions of invariant pseudodifferential operators to isotypical
  components}
  
\author[A. Baldare]{Alexandre Baldare}
\email{alexandre.baldare@math.uni-hannover.de} \address{Institut fur
  Analysis, Welfengarten 1, 30167 Hannover, Germany}

\author[R. C\^ome]{R\'emi C\^ome} \email{remi.come@univ-lorraine.fr}
\address{Universit\'{e} Lorraine, 57000 Metz, France}

\author[M. Lesch]{Matthias Lesch} \address{Mathematisches Institut,
  Universit\"at Bonn, Endenicher Allee 60, 53115 Bonn, Germany}
\email{ml@matthiaslesch.de, lesch@math.uni-bonn.de}
\urladdr{www.matthiaslesch.de, www.math.uni-bonn.de/people/lesch}

\author[V. Nistor]{Victor Nistor} \email{nistor@univ-lorraine.fr}
\address{Universit\'{e} Lorraine, 57000 Metz, France}
  \urladdr{http://www.iecl.univ-lorraine.fr/~Victor.Nistor}

  \thanks{{\em Key words:} Fredholm operator, $C^*$-algebra, primitive
    ideal, pseudodifferential operator, group actions, induced
    representations. {\em AMS Subject classification:} 47A53, 58J40,
    46L60, 47L80, 46N20.\\
    A.B., R.C., and V.N. have been partially supported by
  ANR-14-CE25-0012-01 (SINGSTAR). V.N. was also supported by the NSF
  grant DMS 1839515. M.L. was partially supported by the Hausdorff Center for
  Mathematics, Bonn.}
%
%

\begin{abstract}
Let $\Gamma$ be a compact group acting on a smooth, compact manifold
$M$, let $P \in \psi^m(M; E_0, E_1)$ be a $\Gamma$-invariant,
classical pseudodifferential operator acting between sections of two
equivariant vector bundles $E_i \to M$, $i = 0,1$, and let $\alpha$ be
an irreducible representation of the group $\Gamma$. Then $P$ induces
a map $\pi_\alpha(P) : H^s(M; E_0)_\alpha \to H^{s-m}(M; E_1)_\alpha$
between the $\alpha$-isotypical components of the corresponding
Sobolev spaces of sections. When $\Gamma$ is finite, we explicitly
characterize the operators $P$ for which the map $\pi_\alpha(P)$ is
Fredholm in terms of the principal symbol of $P$ and the action of
$\Gamma$ on the vector bundles $E_i$. When $\Gamma = \{1\}$, that is,
when there is no group, our result extends the classical
characterization of Fredholm (pseudo)differential operators on compact
manifolds. The proof is based on a careful study of the symbol
$C^*$-algebra and of the topology of its primitive ideal spectrum. We
also obtain several results on the structure of the norm closure of
the algebra of invariant pseudodifferential operators and their
relation to induced representations. Whenever our results also hold
for non-discrete groups, we prove them in this greater generality. As
an illustration of the generality of our results, we provide some
applications to Hodge theory and to index theory of singular quotient
spaces.
\end{abstract}

\maketitle \tableofcontents

\section{Introduction}

Fredholm operators have been extensively studied and appear in many
questions in Mathematical Physics, in Partial Differential Equations
(linear and non-linear), in Algebraic and Differential Geometry, in
Index Theory, and in other areas. On a compact manifold, a classical
pseudodifferential operator is Fredholm between suitable Sobolev
spaces if, and only if, it is elliptic. In this paper, we obtain an
analogous result for the restrictions to isotypical components of a
classical pseudodifferential operator $P$ invariant with respect to
the action of a finite group $\Gamma$ using $C^*$-algebra
methods. Namely, {\em the restriction of $P$ to the isotypical
  component corresponding to an irreducible representation $\alpha$ of
  $\Gamma$ is Fredholm if, and only if, the operator is
  $\alpha$-elliptic (Definition \ref{def.chi.ps} and Theorem
  \ref{thm.main1}).}

Let us now formulate and explain this result in more detail.

\subsection{The setting and general notation}
We shall work essentially in the same setting as the one considered in
\cite{BCLN1}, but for a general finite group $\Gamma$. Thus, {\em
  throughout this paper, $\Gamma$ will be a finite group acting by
  diffeomorphisms on a smooth Riemannian manifold $M$.} As our main
result is only valid for a compact manifold, we assume for this
introduction that $M$ is compact. For the main result (Theorem
\ref{thm.main1}), we do need $\Gamma$ to be discrete and finite, so
our main result is optimal. Again, see \cite{BCLN1}. There is no loss
of generality to assume that $M$ is endowed with an invariant
Riemannian metric, so we will assume that this is the case also.

As usual, $\widehat{\Gamma}$ denotes the finite set of equivalence
classes of irreducible $\Gamma$-modules (or representations). Let $T :
V_0 \to V_1$ be a $\Gamma$-equivariant linear map of $\Gamma$-modules
and $\alpha \in \widehat \Gamma$. Then $T$ induces by restriction a
$\Gamma$-equivariant linear map
\begin{equation}\label{eq.restriction}
    \pi_\alpha(T) : V_{0\alpha} \to V_{1\alpha}
\end{equation}
between the $\alpha$-isotypical components of the $\Gamma$-modules
$V_i$, $i = 0, 1$.

We are mostly interested in this restriction morphism $\pi_\alpha$ in
the following case. Let $P \in \psi^m(M; E_0, E_1)$ be a classical,
$\Gamma$-invariant pseudodifferential operator acting between sections
of two $\Gamma$-equivariant vector bundles $E_i \to M$, $i=0, 1$. Then
we obtain the operator
\begin{equation}\label{eq.def.Pchi}
  \pi_\alpha(P) \, : \, H^s(M; E_0)_\alpha \, \to \, H^{s-m}(M;
  E_1)_\alpha\,,
\end{equation}
which acts between the $\alpha$-isotypical components of the
corresponding Sobolev spaces of sections. Our main result concerns
this operator $\pi_\alpha(P)$. For simplicity, we will consider only
classical pseudodifferential operators in this article
\cite{Hormander3, jager, MelroseScattering, TaylorBook81, Taylor2,
  Treves}.

\subsection{The $\alpha$-principal symbol and $\alpha$-ellipticity}
To put our result into the right perspective, recall that a classical,
order $m$, pseudodifferential operator $P$ is called {\em elliptic} if
its principal symbol
\begin{equation}\label{eq.princ.symb}
  \sigma_m(P) \in \maC^\infty(T^*M \smallsetminus \{0\}; \Hom(E_0, E_1))\,,
\end{equation}
is invertible. Also, recall that a linear operator
  $T : X_0 \to X_1$ acting between Banach spaces is {\em Fredholm} if,
  and only if, the vector spaces
\begin{equation*}
    \ker(T) \ede T^{-1}(0)\ \mbox{ and } \ \coker(T) \ede X_1/TX_0
\end{equation*}
are (both) finite dimensional. Since $M$ is compact, a very well known
and widely used result states that $P : H^s(M; E_0) \to H^{s-m}(M;
E_1)$ is Fredholm if, and only if, $P$ is elliptic, see for instance
\cite{Hormander3, MelroseScattering, Schrohe1, Seeley63, Taylor2,
  VasyNbody} and the references therein.  Consequently, if $P$ is
elliptic, then $\pi_\alpha(P)$ is also Fredholm. The converse is not
true, however, in general. 

To state our main result characterizing the Fredholm property of
$\pi_\alpha(P)$ in terms of the ``$\alpha$-principal symbol''
$\sigma_m^\alpha(P)$ of $P$, Theorem \ref{thm.main1}, we shall need to
introduce $\sigma_m^\Gamma(P)$, the ``$\Gamma$-equivariant principal
symbol'' of $P$, which is a refinement of the principal symbol
$\sigma_m(P)$ of $P$ that takes into account the action of the group
$\Gamma$. The {\em $\alpha$-principal symbol} $\sigma_m^\alpha(P)$ of
$P$ is a suitable restriction of the $\Gamma$-equivariant principal
symbol $\sigma_m^\Gamma(P)$. Let us formulate now the precise
definition of these concepts.

The {\em main question} that we answer in this paper is to determine
when the induced operator $\pi_\alpha(P)$ of Equation
\eqref{eq.def.Pchi} is Fredholm in terms of its $\Gamma$-equivariant
principal symbol $\sigma_m^\Gamma(P)$, see Theorem \ref{thm.main1}
below for the precise statement.

The $\Gamma$-invariance of $P$ implies that its principal symbol is
also $\Gamma$ invariant:
\begin{equation*}
  \sigma_m(P) \in \maC^{\infty}(T^*M \smallsetminus
  \{0\}; \Hom(E_0, E_1))^\Gamma \,.
\end{equation*}
Let $\Gamma_\xi := \{ \gamma \in \Gamma
\, \vert \ \gamma \xi = \xi \}$ denote the isotropy of a $\xi \in
T_x^*M$, $x \in M$, as usual. The isotropy $\Gamma_x$ of $x \in M$ is
defined similarly. Then $\Gamma_\xi \subset \Gamma_x$ acts on $E_{0x}$
and on $E_{1x}$, the fibers of $E_0, E_1 \to M$ at $x$. If $Q \in
\maC^\infty(T^*M \smallsetminus \{0\}; \Hom(E_0, E_1))^\Gamma$, then
$Q(\xi) \in \Hom(E_{0x}, E_{1x})^{\Gamma_\xi}$. Let $\rho \in
\widehat{\Gamma}_\xi$ be an irreducible representation of
$\Gamma_\xi$, then
\begin{equation}\label{eq.hatQ}
  \widehat{Q} (\xi, \rho) \ede \pi_{\rho}\big [Q(\xi) \big] \in
  \Hom(E_{0x \rho}, E_{1x \rho})^{\Gamma_\xi}
\end{equation}
denotes the restriction of $Q$ to the isotypical component
corresponding to $\rho$, with $\pi_\rho$ defined in Equation
\eqref{eq.restriction}. Let
\begin{equation}\label{eq.def.XMG}
  X_{M, \Gamma} \ede \{ ( \xi, \rho) \, \vert \ \xi \in T^*M
  \smallsetminus \{0\} \mbox{ and } \rho \in \widehat \Gamma_\xi \}
  \,.
\end{equation}
Thus $Q$ defines a function on $X_{M, \Gamma}$. Applying this
construction to $\sigma_m(P) \in \maC^\infty(T^*M \smallsetminus
\{0\}; \Hom(E_0, E_1))^\Gamma$ we obtain a function, the {\em
    $\Gamma$-principal symbol}
\begin{equation}\label{eq.def.Gamma.symb}
  \begin{gathered}
  \sigma_m^\Gamma(P) : X_{M, \Gamma} \to \bigcup_{(x, \rho) \in X_{M,
      \Gamma}} \Hom(E_{0x \rho}, E_{1x \rho})^{\Gamma_\xi}\,, \\
  \sigma_m^\Gamma (P) (\xi, \rho) \ede \pi_{\rho}(\sigma_m(P)(\xi))
  \in \Hom(E_{0x \rho}, E_{1x \rho})^{\Gamma_\xi}\,, \ \ \xi \in
  T_x^*M\,.
   \end{gathered}
\end{equation}
That is $\sigma_m^\Gamma (P) := \widehat{\sigma_m (P)}$.

The $\alpha$-principal symbol $\sigma_m^\alpha(P)$ of $P$, $\alpha \in
\widehat{\Gamma}$, is defined in terms of $\sigma_m^\Gamma(P)$, but we
need a crucial additional ingredient that takes $\alpha$ into account.

Recall that $\Gamma_{g\xi}=g\Gamma_\xi g^{-1}$ and that this defines
an action of $\Gamma$ on the set of stabilizer subgroups
$\operatorname{Stab}_\Gamma(T^*M) := \{\Gamma_\xi \mid \xi \in T^*M\}$
given by $g\cdot \Gamma_\xi=\Gamma_{g\xi}$. For $\rho \in
\widehat{\Gamma}_\xi$ define $g \cdot \rho \in
\widehat{\Gamma}_{g\xi}$ by $(g\cdot \rho)(h)=\rho(g^{-1}hg)$, for all
$h\in \Gamma_{g\xi }$. Let $\Gamma_0 \subset \Gamma$ be a
  minimal element (for inclusion) among the isotropy groups $\Gamma_x$
  of elements $x \in M$. Such a minimal element exists trivially,
  since $\Gamma$ is finite. Moreover, if $M/\Gamma$ is connected,
  then $\Gamma_0$ is unique up to conjugacy (see Subsection
  \ref{ssec.principal}). We assume for a moment that this is the case,
  that is, that $M/\Gamma$ is connected. Then we let
\begin{equation}\label{eq.def.Xalpha}
   X^{\alpha}_{M, \Gamma} \ede \{(\zeta, \rho) \in X_{M, \Gamma}\,
   \mid\ \exists g\in \Gamma,\ \Hom_{\Gamma_0}(g\cdot \rho, \alpha)
   \neq 0\, \, \}\,.
\end{equation}
(Note that it is implicit in the definition of $X^{\alpha}_{M,
  \Gamma}$ that $\Gamma_0 \subset \Gamma_{g \zeta} = g \cdot
\Gamma_{\zeta}$.) In general (if $M/\Gamma$ is not connected),
  we define $X^\alpha_{M, \Gamma}$ by taking the disjoint union of the
  corresponding spaces for each connected component of $M/\Gamma$, see
  Subsection \ref{ssec.disconnected}.

\begin{definition}\label{def.chi.ps}
The {\em $\alpha$-principal symbol} $\sigma_m^\alpha (P)$ of $P$ is
the restriction of the \emph{$\Gamma$-principal symbol}
$\sigma_m^\Gamma(P)$ to $X^\alpha_{M, \Gamma}$:
\begin{equation*}
  \sigma_m^\alpha(P) \ede \sigma_m^\Gamma(P)\vert_{X^\alpha_{M,
      \Gamma}}\,.
\end{equation*}
We shall say that $P \in \psi^m(M; E_0, E_1)^{\Gamma}$ is {\em
  $\alpha$-elliptic} if its $\alpha$-principal symbol
$\sigma_m^\alpha(P)$ is invertible everywhere on its domain of
definition. Note that when $(\xi,\rho) \in X_{M,\Gamma}^\alpha$ is
such that $E_{x\rho} = 0$, then $\sigma_m^\Gamma(P)(\xi,\rho) : 0 \to
0$ is always invertible.
\end{definition}

\subsection{Statement of the main result}
An alternative formulation of Definition \ref{def.chi.ps} is that $P$
is $\alpha$-elliptic if, and only if, $\sigma_m^\Gamma$ is invertible
on $X_{M, \Gamma}^\alpha$ (this is, of course, a condition only for
those $\rho$ such that $E_{i \rho} \neq 0$, because, otherwise, we get
an operator acting on the zero spaces, which we admit to be
invertible). We then have the following result extending the classical
result (i.e.\ $\Gamma = \{1\}$) and the one from \cite{BCLN1}
(i.e.\ $\Gamma$ finite abelian) to a general finite group $\Gamma$.

\begin{theorem}\label{thm.main1}
Let $\Gamma$ be a finite group acting on a smooth, compact manifold
$M$ and let $P \in \psi^m(M; E_0, E_1)^{\Gamma}$ be a
$\Gamma$-invariant classical pseudodifferential operator acting
between sections of two $\Gamma$-equivariant bundles $E_i \to M$, $i =
0, 1$, $m \in \RR$, and $\alpha \in \widehat{\Gamma}$. We have that
\begin{equation*}
    \pi_\alpha(P) : H^s(M; E_0)_\alpha \, \to \, H^{s-m}(M;
    E_1)_\alpha
\end{equation*}
is Fredholm if, and only if it is $\alpha$-elliptic.
\end{theorem}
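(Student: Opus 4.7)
The plan is to recast the Fredholm property as invertibility modulo an ideal in a suitable $C^*$-algebra and then to reduce that invertibility to a pointwise condition by computing the primitive ideal spectrum of the quotient algebra. This is a standard strategy for such problems, but the non-trivial actions of the various isotropy groups make the identification of the spectrum with $X^\alpha_{M, \Gamma}$ subtle.

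\emph{Step 1: Reduction to order zero.} Using the $\Gamma$-invariant elliptic isomorphism $(1+\Delta)^{s/2} : H^s \to L^2$ (where $\Delta$ is an invariant Laplacian), which commutes with every $\pi_\alpha$, I would reduce to the case $s = 0$, $m = 0$. Passing to the sum $E := E_0 \oplus E_1$ allows me to work inside the single algebra $\mfkA \ede \clop^{\Gamma}$ of $\Gamma$-invariant order zero operators on $L^2(M;E)$.

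\emph{Step 2: Reformulation via a $C^*$-ideal.} The restriction $\pi_\alpha$ defines a $*$-homomorphism $\mfkA \to \maB(L^2(M;E)_\alpha)$. Composing with the quotient onto the Calkin algebra yields $\pi_\alpha^c : \mfkA \to \maB(L^2_\alpha)/\maK(L^2_\alpha)$, and $\pi_\alpha(P)$ is Fredholm if and only if $P$ is invertible modulo $\mfkJ_\alpha \ede \ker \pi_\alpha^c$. By the $C^*$-algebra Gelfand criterion, this is equivalent to $\rho(P)$ being invertible for every irreducible representation $\rho$ of $\mfkA$ whose kernel contains $\mfkJ_\alpha$, i.e.\ for every point of $\Prim(\mfkA/\mfkJ_\alpha)$.

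\emph{Step 3: Primitive ideal spectrum of $\mfkA/\maK$.} The closure $\clopn^{\Gamma}$ of order $-1$ operators is $\maK(L^2(M;E))$, and $\mfkA/\maK$ is the $\Gamma$-invariant principal symbol algebra over the cosphere bundle $S^*M$. For a point $\xi \in T^*M \smallsetminus \{0\}$ with stabilizer $\Gamma_\xi$, the fiber algebra is $\End(E_x)^{\Gamma_\xi}$, whose irreducible representations are indexed, via Peter--Weyl decomposition of $E_x$ under $\Gamma_\xi$, by $\rho \in \widehat{\Gamma}_\xi$. These fiber representations extend (using the $\Gamma$-action on $S^*M$ and an induction/Morita argument) to irreducible representations of $\mfkA/\maK$ indexed by $\Gamma$-orbits in $X_{M,\Gamma}$, on which $P$ acts by $\sigma_0^\Gamma(P)(\xi,\rho)$. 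This step essentially adapts the analysis from \cite{BCLN1} to the non-abelian setting; the new ingredient is that fiberwise endomorphism algebras are non-commutative matrix algebras rather than scalars.

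\emph{Step 4: Identifying which primitive ideals contain $\mfkJ_\alpha$, and conclusion.} The irreducible representation of $\mfkA/\maK$ indexed by $(\xi,\rho)$ arises (up to unitary equivalence) by inducing a $\Gamma_\xi$-equivariant fiber representation of type $\rho$ up to $\Gamma$; its restriction to the $\alpha$-isotypical subspace is non-zero precisely when $\Ind_{\Gamma_\xi}^\Gamma \rho$ contains $\alpha$. Carrying out Frobenius reciprocity along the principal orbit stratum (where all isotropies are conjugate to the principal isotropy $\Gamma_0$) converts this condition into $\Hom_{\Gamma_0}(g \cdot \rho, \alpha) \neq 0$ for some $g \in \Gamma$, i.e.\ into $(\xi,\rho) \in X^\alpha_{M,\Gamma}$. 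The disconnected case is handled by decomposing $\mfkA$ according to the connected components of $M/\Gamma$ (Subsection \ref{ssec.disconnected}). Combining Steps 2 and 3 gives that $P$ is invertible in $\mfkA/\mfkJ_\alpha$ if and only if $\sigma_0^\Gamma(P)(\xi,\rho)$ is invertible on $X^\alpha_{M,\Gamma}$, which is exactly $\alpha$-ellipticity.

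\emph{Main obstacle.} The hardest part is Step 4: identifying $\Prim(\mfkA/\mfkJ_\alpha)$ topologically with $X^\alpha_{M,\Gamma}$ and showing that the Frobenius-type characterization captures precisely those $(\xi,\rho)$ whose irreducible representation survives after restriction to the $\alpha$-isotypical component. The presence of the principal isotropy $\Gamma_0$ in the definition of $X^\alpha_{M,\Gamma}$ is the signature that one must exploit the fibration structure of $M/\Gamma$ over its principal stratum, and pinning this down rigorously (especially in the disconnected case and for strata where $E_{x\rho} = 0$) is the technical heart of the argument.
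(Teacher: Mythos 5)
Your overall strategy matches the paper's: reduce to an order-zero operator on $E=E_0\oplus E_1$, use the equivariant Atkinson theorem to recast the Fredholm property as invertibility of the symbol in $A_M^\Gamma/\ker(\maR_M)$, identify $\Prim(A_M^\Gamma)$ with $\Omega_M/\Gamma$, and then determine which primitive ideals contain $\ker(\maR_M)$. Your Steps 1--3 are essentially Corollary \ref{cor.Atkinson} and Proposition \ref{prop.prim}.

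The gap is in Step 4, and it is not just a missing detail. You claim that the primitive ideal attached to $(\xi,\rho)$ survives in the quotient precisely when $\alpha\subset\Ind_{\Gamma_\xi}^{\Gamma}(\rho)$, i.e.\ when $\Hom_{\Gamma_\xi}(\rho,\alpha)\neq 0$, and then ``convert'' this to $\Hom_{\Gamma_0}(g\cdot\rho,\alpha)\neq 0$ by Frobenius reciprocity. These two conditions agree only over the principal stratum, where $\Gamma_\xi\sim\Gamma_0$; for $\xi$ with strictly larger isotropy the first is strictly stronger, so this is not a conversion but a silent replacement of a wrong criterion by the right one. Concretely, if $\Gamma$ acts freely on a dense open set (so $\Gamma_0=\{1\}$) and $\xi$ is fixed by all of $\Gamma$, your criterion retains only $\rho\simeq\alpha$, hence would require invertibility of $\sigma(P)(\xi)$ only on $E_{x\alpha}$, whereas by Corollary \ref{cor.all} and Proposition \ref{prop.free.dense} the correct condition at such $\xi$ is invertibility on all of $E_x$; with your criterion the ``$\alpha$-elliptic $\Rightarrow$ Fredholm'' direction fails. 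What is missing is the content of Theorem \ref{theorem.non.principal} and Proposition \ref{prop.computation.Xi}: the set $\Xi=\Prim(A_M^\Gamma/\ker(\maR_M))$ is the \emph{closure} in the Jacobson topology of its trace $\Xi_0$ over the principal orbit bundle. Proving this requires, beyond the slice/Frobenius computation of $\Xi_0$ (Proposition \ref{prop.str3}), an explicit quantization step: for $(\xi,\rho)$ outside $\overline{\Xi}_0$ one builds an invariant operator $\widetilde Q_n$ with symbol supported near $\Gamma\xi$ and shows it annihilates $L^2(M;E)_\alpha$, using that $M_0$ has full measure so that $L^2(M_0;E)_\alpha=L^2(M;E)_\alpha$. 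This density-plus-quantization argument is exactly where $\Gamma_0$, rather than $\Gamma_\xi$, enters; you correctly flag this as the main obstacle, but the mechanism you propose for overcoming it is the invalid equivalence above.
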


As in the abelian case, if $\Gamma$ acts without fixed points on a
{\em dense} open subset of $M$, then $X_{M, \Gamma} = X^\alpha_{M,
  \Gamma}$ for all $\alpha \in \widehat{\Gamma}$, by Corollary
\ref{cor.all}. Hence, in this case, $P$ is $\alpha$-elliptic if, and
only if, it is elliptic. The ellipticity of $P$ can thus be checked in
this case simply by looking at the action of $P$ on a single
isotypical component. We stress, however, that if $\Gamma$ is not
discrete, this statement, as well as the statement of the above
theorem, are no longer true. However, many intermediate results remain
valid for compact Lie groups.

A motivation for our result comes from index theory. Let us assume
that $P$ is $\Gamma$-invariant and elliptic. Atiyah and Singer have
determined, for any $\gamma \in \Gamma$, the value at $\gamma$ of the
character of $\ind_\Gamma(P) \in R(G)$. More precisely, they have
computed $\ind_\Gamma(P)(\gamma) \in \CC$ in terms of data at the
fixed points of $\gamma$ on $M$ \cite{AS3}. (Here $R(G) :=
\ZZ^{\widehat{G}}$ is the representation ring of $G$ and is identified
with a subalgebra of $\CI(G)^G$, the ring of conjugacy invariant
functions on $G$ via the characters of representations.)  By contrast,
the multiplicity of $\alpha \in \widehat{\Gamma}$ in $\ind_\Gamma(P)$
was much less studied. It did appear, however, implicitly in the work
of Br\"uning \cite{Bruning78}, who initiated the program of studying
the ``isotypical heat trace'' $\mathrm{tr}( p_\alpha e^{-t\Delta} )$
and its short time asymptotic expansion. Its heat trace is nothing but
the heat trace of $\pi_\alpha(\Delta)$. This question was addressed
also by Paradan and Vergne, who obtained several important related
results, see \cite{PVActa} and the references therein. Br\"uning's
program would lead, in particular, to a heat equation determination of
the $\alpha$-isotypical component of the $\Gamma$-equivariant index
$\ind_\Gamma(D)$ for Dirac type operators $D$. This program is one of
the motivations of this paper.

Indeed, we obtain that the (Fredholm) index of $\pi_{\alpha}(P)$
depends only on the homotopy class of its $\alpha$-principal
symbol. Some results are contained, however, in Theorem
\ref{thm.index} and in the remark following it. In particular, this
yields results on the index theory of singular quotient spaces. We
therefore expect our results to have applications to the Hodge theory
of algebraic varieties \cite{AlbinHodge, MazzeoHodge, Brion,
  CheegerHodge, GriffithHarrisBook}, see Remark \ref{rem.Hodge}. In
the case of a non discrete compact Lie group, the computation of this
index is related to the index class of $G$-transversally elliptic
operators initiated in \cite{atiyahGelliptic,singer73}. Since then,
this has been studied in $K$-theory
\cite{Baldare:KK,Benameur:Baldare,Julg82,Kasparov2016} and in
equivariant cohomology \cite{Baldare:H, BV1,BV2,PVTrans}.
 
Our proof makes heavy use of $C^*$-algebras by considering the natural
$C^*$-algebra completions of algebras of pseudodifferential operators
and related $C^*$-algebras. This point of view has been used and
advocated of course by many people. $C^*$-algebras were used very
recently to obtain Fredholm conditions in \cite{dMG,LMN, MPR}, for
example.  Some of the algebras involved were groupoid algebras
\cite{AnSk11a, AnSk11b, CNQ, DS2, Mo1, Re}. Fredholm conditions play
an important role in the study of the essential spectrum of Quantum
Hamiltonians \cite{BLLS1, GI, Ge, HM, LS}. The technique of ``limit
operators'' \cite{LR, Li, LiS, RRS} is related to groupoids. Some of
the most recent papers using related ideas include \cite{Zhang, CCQ,
  CNQ, Remi, MougelH, MaNi, vEY2}, to which we refer for further
references. Besides $C^*$-algebras, pseudodifferential operators were
also used to obtain Fredholm conditions, see, for example, \cite{DLR,
  LauterMoroianu1, LeschVertman, MelroseScattering,
  MazzeoMelroseAsian, Schrohe1, Schrohe2, SchulzeBook98} and the
references therein. In addition to the works already mentioned,
several general results on $C^*$ and related algebras related to this
work were obtained by Cordes and McOwen \cite{CordesCstar}, Melo,
Nest, and Schrohe \cite{SchroheCstar}, Melrose and Nistor
\cite{MelroseNistorK}, Rabinovich, Schulze, and Tarkhanov \cite{Rab3},
Taylor \cite{TaylorG}, Voiculescu \cite{Voiculescu1, Voiculescu2,
  Voiculescu3}, and others. See \cite{CalderonZygmund1,
  CalderonZygmund2, CordesAlg, KohnNirenbergAlg, gokhberg52,
  gokhberg60, HormanderAlg, mihlin48, mihlin56} for some older,
related results on singular integral operators.

\subsection{Contents of the paper} 
\label{sub:Contents of the paper}
We start in Section \ref{sec.Preliminaries} with some
preliminaries. We recall some facts about group actions, most notably
the induction of representations and Frobenius reciprocity for finite
groups. We also review some notions concerning the primitive spectrum
of a $C^*$-algebra, as well as basic facts concerning (equivariant)
pseudodifferential operators.

As in \cite{BCLN1}, we may assume $E_0 = E_1 = E$ and $P$ to be of
order zero. Let $A_M := \maC(S^*M;\End(E))$. The most substantial
technical results are in Section \ref{sec.principal.symbol}. There, we
introduce the subset $\Omega_M:=\{(\xi,\rho)\in S^*M\times
\hat{\Gamma}_\xi,\ \rho \subset E_\xi\}$ of $X_{M,\Gamma}$ described
above and identify the primitive spectrum of the $C^*$-algebra
$A_M^\Gamma$ of $\Gamma$-invariant symbols with the set
$\Omega_M/\Gamma$.  Some care is taken to describe the corresponding
topology on $\Omega_M/\Gamma$. We then consider the canonical map from
$A_M^\Gamma$ to the Calkin algebra of $L^2(M;E)_\alpha$ and show that
the closed subset of $\Prim (A_M^\Gamma)$ associated to its kernel is
$X_{M,\Gamma}^\alpha/\Gamma$.

These descriptions are used in Section \ref{sec.applications} to prove
the main result of the paper, Theorem \ref{thm.main1}. This section
also addresses some particular cases of the Theorem and gives a few
examples. We also explain the relation with previously known results,
namely:
  the particular formulation in the abelian case, which was
    established in \cite{BCLN1};
  Fredholm conditions for transversally elliptic operators when
    the group $\Gamma$ is not discrete; and
  Simonenko's local principle for Fredholm operators.

We thank Claire Debord, Paul-Emile Paradan, Elmar Schrohe, Georges
Skandalis, and Andrei Teleman for useful discussions. The last named
author thanks Max Planck Institute for support while this research was
performed.

\section{Preliminaries}
\label{sec.Preliminaries}

This section is devoted to background material. For the most part, it
will consist of a brief review of sections 2 and 3 of \cite{BCLN1},
where the reader will find more details, as well as definitions and
results not repeated here. Note, however, that we need certain
preliminary results for the case $\Gamma$ non-commutative that were
not needed in the abelian case. Nevertheless, the reader familiar with
\cite{BCLN1} can skip this section at a first reading.

For simplicity, let us {\em from now on for the rest of this paper
  also assume that $M/\Gamma$ is connected, except in Subsection
    \ref{ssec.disconnected},} where we explain how the disconnected
  case reduces to the connected case.

\subsection{Group representations}
\label{ssec.group.r}
We follow the standard terminology and conventions. See, for instance,
\cite{BCLN1, tomDieckRepBook, SerreBook}, where one can find further
details. Most of the needed basic background material was recalled in
greater detail in \cite{BCLN1}.

Throughout the paper, we denote by $\Gamma$ a \emph{finite} group
acting by isometries on a smooth, Riemannian manifold $M$ (without
boundary). We use the standard notations, see \cite{BCLN1,
  tomDieckRepBook, SerreBook}, to which we refer for further details.
If $x \in M$, then $\Gamma x$ is the $\Gamma$ orbit of $x$ and
\begin{equation}
   \Gamma_x := \{ \gamma \in \Gamma \, \vert \ \gamma x = x \} \subset
   \Gamma
\end{equation}
the isotropy group of the action at $x$.

We shall write $H \sim H'$ if the subgroups $H$ and $H'$ are
conjugated in $\Gamma$. If $H \subset \Gamma$ is a subgroup, then
$M_{(H)}$ will denote the set of elements of $M$ whose isotropy
$\Gamma_{x}$ is conjugated to $H$ (in $\Gamma$), that is, the set of
elements $x \in M$ such that $\Gamma_x \sim H$.

Assuming that $\Gamma$ acts on a space $X$, we denote by $\Gamma
\times_{H} X$ the space
\begin{equation}\label{eq.associate}
\Gamma \times_H X  \ede (\Gamma \times  X)/\sim, 
\end{equation}
where $(\gamma h,x)\sim (\gamma ,hx)$, $\forall \gamma \in \Gamma,
h\in H$ and $x\in X$.  

Let $V$ be a normed complex vector space and $\maL(V)$ be the set of
bounded operators on $V$. A representation of $\Gamma$ on $V$ is a
group morphism $\Gamma \to \maL(V)$; in that case we also call $V$ a
$\Gamma$-module.

For any two $\Gamma$-modules $\maH$ and $\maH_1$, we shall denote by
\begin{equation*}
    \Hom_{\Gamma}(\maH, \maH_1) \seq \Hom(\maH, \maH_1)^\Gamma \seq
    \maL(\maH, \maH_1)^\Gamma
\end{equation*}
the set of continuous linear maps $T : \maH \to \maH_1$ that commute
with the action of $\Gamma$, that is, $T (\gamma \xi) = \gamma T(\xi)$
for all $\xi \in \maH$ and $\gamma \in \Gamma$.

Let $\maH$ be a $\Gamma$-module and $\alpha$ an irreducible
$\Gamma$-module. Then $p_\alpha$ will denote the $\Gamma$-invariant
projection onto the $\alpha$-isotypical component $\maH_\alpha$ of
$\maH$, defined as the largest (closed) $\Gamma$ submodule of $\maH$
that is isomorphic to a multiple of $\alpha$. In other words,
$\maH_\alpha$ is the sum of all $\Gamma$-submodules of $\maH$ that are
isomorphic to $\alpha$. Notice that $\maH_{\alpha}
\simeq \alpha \otimes \Hom_{\Gamma}(\alpha, \maH)$.

Since $\Gamma$ is finite, it is, in particular, compact, and hence we
have
\begin{equation}\label{eq.isotypical}
  \maH_\alpha \, \neq \, 0 \ \Leftrightarrow \ \Hom_\Gamma(\alpha,
  \maH) \, \neq \, 0 \ \Leftrightarrow \ \Hom_\Gamma(\maH, \alpha) \,
  \neq \, 0 .
\end{equation}
If $T \in \maL(\maH)^\Gamma$ (i.e.\ $T$ is $\Gamma$-equivariant), then
$T(\maH_\alpha) \subset \maH_\alpha$ and we let
\begin{equation}\label{eq.restriction2}
    \pi_\alpha : \maL(\maH)^\Gamma \to \maL(\maH_\alpha) \,, \quad
    \pi_\alpha(T) \ede T\vert_{\maH_\alpha}\,,
\end{equation}
be the associated morphism, as in Equation \eqref{eq.restriction} of
the Introduction. The morphism $\pi_\alpha$ will play an essential
role in what follows.

\subsection{Induction and Frobenius reciprocity}
\label{ssec.Frobenius}
We now recall some definitions and results for induced representations
mainly to set up notation and to obtain some intermediate results. 

We let $V^{(I)} := \{ f : I \to V \}$ for $I$ finite. If $H \subset
\Gamma$ is a subgroup (hence also finite) and $V$ is a $H$-module, we
define, as usual,
\begin{equation}\label{eq.def.induced}
  \begin{split}
    \Ind_H^\Gamma (V)  \ede \ & \CC[\Gamma] \otimes_{\CC[H]} V \\ 
   \simeq \ &
  \{\, f : \Gamma \to V \, |\ f(gh^{-1}) = h f(g) \, \} \,
  \simeq \, V^{(\Gamma/H)}\,
  \end{split}
\end{equation}
to be the {\em induced representation}.
The last isomorphism is obtained using a set of representatives of
the right cosets $\Gamma/H$. The action of the group $\Gamma$ on
$\Ind_H^\Gamma (V)$ is obtained from the left multiplication on
$\CC[\Gamma]$ and the first isomorphism defines the $\Gamma$-module
structure on $\Ind_H^\Gamma (V)$. The induction is a functor, that is,
the $\Gamma$-module $\Ind_H^\Gamma (V)$ depends functorially on $V$.

\begin{remark}\label{rem.CD} 
Summarizing Remark 2.2 of \cite{BCLN1}, we have that
\begin{enumerate}
\item if $V$ is a $H$-algebra, then $\Ind_H^\Gamma(V)$ is an algebra
  for the pointwise product,
 \item if $V$ is a left $R$-module (with compatible actions of
   $\Gamma$), then $\Ind_H^\Gamma (V)$ is a $\Ind_H^\Gamma (R)$
   module, again with the pointwise multiplication,
 \item the induction is compatible with morphisms of modules and
   algebras (change of scalars), again by the function representation
   of the induced representation.
\end{enumerate}
See \cite[Remark 2.2]{BCLN1} for more details.
\end{remark}

We shall use the {\em Frobenius reciprocity} in the form that states
that we have an isomorphism
\begin{equation}\label{eq.Frobenius}
  \begin{gathered}
    \Phi = \Phi_{H, V}^{\Gamma, \maH} : \Hom_{H}( \maH, V ) \, \to \,
    \Hom_{\Gamma}( \maH, \Ind_H^\Gamma(V) ) \,,\\ \Phi(f)(\xi) \ede
    \frac{1}{|H|} \, \sum_{g \in \Gamma} g \otimes_{\CC[H]} f(g^{-1}
    \xi),\quad \xi \in \maH,\ f \in \Hom_{H}( \maH, V ) \,.
  \end{gathered}
\end{equation}
The version of the Frobenius reciprocity used in this paper is valid
only for finite groups \cite{tomDieckRepBook, SerreBook} (although it
can be suitably be generalized to the compact case). We note that a
more precise notation would be to write $\Hom_{H}(
\operatorname{Res_H^\Gamma}(\maH), V)$ instead of our simplified
notation $\Hom_{H}(\maH, V)$.

\begin{definition}\label{def.associated}
Let $A$ and $B$ be finite groups and let $H$ a subgroup of both
  $A$ and $B$. Let $\alpha \in \widehat{A}$ and $\beta \in
  \widehat{B}$. We say that $\alpha $ and $\beta$ are {\em
    $H$-disjoint} if $\mathrm{Hom}_H(\alpha ,\beta)=0$, otherwise we
  say that they are {\em $H$-associated (to each other)}.
\end{definition}

Let $\alpha \in \widehat{\Gamma}$, let $H \subset \Gamma$ be a
subgroup, and $\beta \in \widehat{H}$. A useful consequence of the
Frobenius reciprocity is that the multiplicity of $\alpha$ in
$\ind_H^\Gamma(\beta)$ is the same as the multiplicity of $\beta$ in
the restriction of $\alpha$ to $H$. In particular, $\alpha$ is
contained in $\ind_H^\Gamma(\beta)$ if, and only if, $\beta$ is
contained in the restriction of $\alpha$ to $H$, in which case recall
that we say that $\alpha$ and $\beta$ are $H$-associated (Definition
\ref{def.associated}). On the other hand, recall that if $\beta$ is
    {\em not} contained in the restriction of $\alpha$ to $H$, we say
    that $\alpha$ and $\beta$ are {\em $H$-disjoint.}

Let $V$ be a $H$-module and $\maH$ be the trivial
  $\Gamma$-module $\CC$. Then we obtain, in particular, an isomorphism
\begin{equation}\label{eq.Frobenius2}
  \begin{gathered}
    \Phi : V^{H} \seq \Hom_{H}( \CC, V ) \, \simeq\, \Hom_{\Gamma}(
    \CC, \Ind_H^\Gamma(V) ) \seq \Ind_H^\Gamma(V)^\Gamma \,,\\
    \Phi(\xi) \ede \frac{1}{|H|} \, \sum_{g \in \Gamma} g
    \otimes_{\CC[H]} \xi \seq \sum_{x \in \Gamma/H} x \otimes \xi\,.
  \end{gathered}
\end{equation}
If $V$ is an algebra, then the map $\Phi$ is an isomorphism of
algebras. In particular, we obtain the following consequences.

\begin{remark}\label{rem.new}
Let $H \subset \Gamma$ be a subgroup of $\Gamma$, $\beta_j$ be
  non-isomorphic simple $H$-modules, $j = 1,\ldots, N$, and
\begin{equation}\label{eq.def.beta}
   \beta \ede \oplus_{j=1}^N
   \beta_j^{k_j} \,.
\end{equation}
We then have that $\Ind_H^\Gamma(\beta) \simeq \oplus_{j=1}^N
\Ind_H^\Gamma(\beta_j^{k_j})$ and the Frobenius isomorphism gives
\begin{equation}\label{eq.direct.sum}
  \Ind_H^\Gamma(\End(\beta))^\Gamma  \simeq \End(\beta)^{H} 
  \simeq \oplus_{j=1}^N \End(\beta_j^{k_j})^H  \simeq
  \oplus_{j=1}^N M_{k_j}(\CC),
\end{equation}
which is a semi-simple algebra and where the first isomorphism is
induced by $\Phi$ of Equation \eqref{eq.Frobenius2}.
\end{remark}

We shall need the following refinement of the above remark.

\begin{lemma}\label{lemma.componentwise}
Let $\beta \ede \oplus_{j=1}^N \beta_j^{k_j}$ be as in Equation
\eqref{eq.def.beta}, let
\begin{equation*}
 T \seq (T_j) \in \End(\beta)^{H}  \, \simeq \, 
   \oplus_{j = 1}^N \End(\beta_j^{k_j})^H\,,
\end{equation*}
with $T_j \in \End(\beta_j^{k_j})^{H}$, and let $\xi_j \in
\Ind_H^\Gamma(\beta_j^{k_j})$. We let
\begin{equation*}
  \xi \ede (\xi_j) \in
  \oplus_{j=1}^N \Ind_H^\Gamma(\beta_j^{k_j}) \simeq
  \Ind_H^\Gamma(\beta) \,.
\end{equation*}
Then $\Phi(T) (\xi) = (\Phi(T_j) \xi_j)_{j=1,\ldots,N}$.
\end{lemma}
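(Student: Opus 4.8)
The plan is to unwind the definitions and use the functoriality of induction together with the explicit formula for the Frobenius isomorphism $\Phi$ from Equation \eqref{eq.Frobenius2}. First I would record the key compatibility: under the direct sum decomposition $\beta = \oplus_{j=1}^N \beta_j^{k_j}$, the induced module decomposes as $\Ind_H^\Gamma(\beta) \simeq \oplus_j \Ind_H^\Gamma(\beta_j^{k_j})$ (since induction is an additive functor, Remark \ref{rem.CD}), and likewise $\End(\beta)^H \simeq \oplus_j \End(\beta_j^{k_j})^H$ as in Equation \eqref{eq.direct.sum}. The point is that all these identifications are natural, so that the inclusion $\iota_j : \beta_j^{k_j} \hookrightarrow \beta$ of $H$-modules induces, by functoriality, the inclusion $\Ind_H^\Gamma(\iota_j) : \Ind_H^\Gamma(\beta_j^{k_j}) \hookrightarrow \Ind_H^\Gamma(\beta)$, and these are exactly the structure maps of the direct sum decomposition. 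Since $\Ind_H^\Gamma$ is given on the level of functions (or via $\CC[\Gamma] \otimes_{\CC[H]} -$) by post-composition with $\iota_j$, the map $\Ind_H^\Gamma(\iota_j)$ sends $g \otimes_{\CC[H]} v$ to $g \otimes_{\CC[H]} \iota_j(v)$.

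Next I would check that $\Phi$ is compatible with these structure maps, i.e.\ that the square
\begin{equation*}
  \begin{CD}
    (\beta_j^{k_j})^H @>{\Phi}>> \Ind_H^\Gamma(\beta_j^{k_j})^\Gamma \\
    @V{\iota_j}VV @VV{\Ind_H^\Gamma(\iota_j)}V \\
    \beta^H @>{\Phi}>> \Ind_H^\Gamma(\beta)^\Gamma
  \end{CD}
\end{equation*}
commutes. This is immediate from the formula $\Phi(\xi) = \tfrac{1}{|H|}\sum_{g \in \Gamma} g \otimes_{\CC[H]} \xi$: applying $\Ind_H^\Gamma(\iota_j)$ to $\Phi(v)$ for $v \in (\beta_j^{k_j})^H$ gives $\tfrac{1}{|H|}\sum_g g \otimes_{\CC[H]} \iota_j(v) = \Phi(\iota_j(v))$. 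The same naturality applies to $\Phi$ on endomorphism algebras: the first isomorphism in Equation \eqref{eq.direct.sum} identifies $\Phi(T)$ with the element of $\oplus_j \Ind_H^\Gamma(\End(\beta_j^{k_j}))^\Gamma$ whose $j$-th component is $\Phi(T_j)$, precisely because $T = \oplus_j T_j$ under $\End(\beta)^H \simeq \oplus_j \End(\beta_j^{k_j})^H$ and $\Phi$ respects the corresponding block decompositions.

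Finally I would combine these two observations to compute $\Phi(T)(\xi)$. Write $\xi = \sum_j \Ind_H^\Gamma(\iota_j)(\xi_j)$ under the identification $\Ind_H^\Gamma(\beta) \simeq \oplus_j \Ind_H^\Gamma(\beta_j^{k_j})$. Since $\Phi(T)$ acts on $\Ind_H^\Gamma(\beta)$ through its block-diagonal form (with blocks $\Phi(T_j)$ acting on $\Ind_H^\Gamma(\beta_j^{k_j})$) — this is exactly the content of the last assertion of Remark \ref{rem.new}, that $\Phi$ is an algebra isomorphism respecting the semisimple decomposition — we get $\Phi(T)(\xi) = \sum_j \Ind_H^\Gamma(\iota_j)\big(\Phi(T_j)\xi_j\big)$, which under the identification reads $\Phi(T)(\xi) = (\Phi(T_j)\xi_j)_{j=1,\ldots,N}$, as claimed. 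The only genuinely delicate point — and the one I would spell out most carefully — is verifying that the pointwise-multiplication module structure of $\Ind_H^\Gamma(\End(\beta))$ on $\Ind_H^\Gamma(\beta)$ is indeed block-diagonal with respect to the $j$-decomposition, i.e.\ that the cross terms $\Phi(T_i)\xi_j$ with $i \neq j$ vanish; this follows because $\Ind_H^\Gamma(\iota_i)(\End(\beta_i^{k_i}))$ and $\Ind_H^\Gamma(\iota_j)(\beta_j^{k_j})$ land in complementary summands, but it is worth stating explicitly since it is where the hypothesis that the $\beta_j$ are pairwise non-isomorphic (hence the summands are canonically distinguished) is implicitly used. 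Everything else is routine bookkeeping with the explicit formula for $\Phi$.
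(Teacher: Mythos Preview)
The paper does not actually prove this lemma; it simply cites \cite[Lemma 2.4]{BCLN1}. Your argument is therefore more than the paper provides, and it is correct: the additivity of $\Ind_H^\Gamma$, the naturality of $\Phi$ with respect to the inclusions $\iota_j$, and the pointwise nature of the module action of $\Ind_H^\Gamma(\End(\beta))$ on $\Ind_H^\Gamma(\beta)$ (Remark \ref{rem.CD}) combine exactly as you describe. In the function model, $\Phi(T)$ is the constant function $g \mapsto T$, so its pointwise action on $\xi = (\xi_j)$ is simply $g \mapsto T(\xi(g)) = (T_j \xi_j(g))_j$, which is your conclusion.

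One minor remark: your closing comment that the vanishing of cross terms is ``where the hypothesis that the $\beta_j$ are pairwise non-isomorphic is implicitly used'' slightly misplaces that hypothesis. The non-isomorphism is used earlier, to obtain the decomposition $\End(\beta)^H \simeq \oplus_j \End(\beta_j^{k_j})^H$ of Equation \eqref{eq.direct.sum} (it kills the cross-$\Hom$ blocks). Once $T$ is presented as $\oplus_j T_j$ under that identification, its action on $\beta = \oplus_j \beta_j^{k_j}$ is block-diagonal \emph{by definition of the direct sum}, and the same then holds pointwise after inducing --- no further appeal to irreducibility or non-isomorphism is needed at that stage.
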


\begin{proof}
  See for example \cite[Lemma 2.4]{BCLN1}.
\end{proof}

For the abelian case, the following
elementary result was proved in \cite{BCLN1} Proposition 2.5.
That proof {\em does not} generalize to our case.

\begin{proposition}\label{prop.res.alpha}
Let $\beta \ede \oplus_{j=1}^N \beta_j^{k_j}$ be as in Equation
\eqref{eq.def.beta}. Let $J \subset \{1, 2, \ldots, N\}$ be the set of
indices $j$ such that $\alpha$ and $\beta_j$ are $H$-disjoint
(i.e.\ $\beta_j$ is not contained in the restriction of $\alpha$ to
$H$). Then the morphism
\begin{equation*}
  \pi_\alpha : \Ind_H^\Gamma(\End(\beta))^\Gamma \to
  \End(p_\alpha \Ind_H^\Gamma(\beta))
\end{equation*}
is such that
\begin{equation*}
  \ker(\pi_\alpha) \seq \bigoplus \limits_{j \in J}
  \Ind_H^\Gamma(\End(\beta_j^{k_j}))^\Gamma \ \mathrm{and}
  \ \operatorname{Im}(\pi_\alpha) \, \simeq \, \bigoplus\limits_{j \notin J}
  \Ind_H^\Gamma(\End(\beta_j^{k_j}))^\Gamma \,.
\end{equation*}
\end{proposition}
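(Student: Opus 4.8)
The plan is to reduce everything to the componentwise description provided by Lemma~\ref{lemma.componentwise}, so that $\pi_\alpha$ acts ``block by block'' along the decomposition $\Ind_H^\Gamma(\End(\beta))^\Gamma \simeq \oplus_{j=1}^N \Ind_H^\Gamma(\End(\beta_j^{k_j}))^\Gamma$. Concretely, under the Frobenius isomorphism $\Phi$ of Equation~\eqref{eq.Frobenius2}, an element $T = (T_j)$ acts on $\xi = (\xi_j) \in \oplus_j \Ind_H^\Gamma(\beta_j^{k_j}) \simeq \Ind_H^\Gamma(\beta)$ by $\Phi(T)(\xi) = (\Phi(T_j)\xi_j)$. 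Since the projection $p_\alpha$ onto the $\alpha$-isotypical component of $\Ind_H^\Gamma(\beta)$ is $\Gamma$-equivariant and the decomposition above is a decomposition of $\Gamma$-modules, $p_\alpha$ respects the summands: $p_\alpha \Ind_H^\Gamma(\beta) = \oplus_j p_\alpha \Ind_H^\Gamma(\beta_j^{k_j})$. Therefore $\pi_\alpha(\Phi(T))$ is the direct sum of the operators $\pi_\alpha(\Phi(T_j))$ acting on the summands $p_\alpha \Ind_H^\Gamma(\beta_j^{k_j})$.

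Given this, the proposition reduces to a single statement about one simple $H$-module: if $\sigma$ is a simple $H$-module and $k \ge 1$, then $\pi_\alpha : \Ind_H^\Gamma(\End(\sigma^k))^\Gamma \to \End(p_\alpha \Ind_H^\Gamma(\sigma^k))$ is the zero map when $\alpha$ and $\sigma$ are $H$-disjoint, and is an isomorphism when they are $H$-associated. The first case is immediate from Equation~\eqref{eq.isotypical} together with Frobenius reciprocity: if $\sigma$ is not contained in $\mathrm{Res}_H^\Gamma \alpha$, then $\alpha$ is not contained in $\Ind_H^\Gamma(\sigma)$, hence not in $\Ind_H^\Gamma(\sigma^k)$, so $p_\alpha \Ind_H^\Gamma(\sigma^k) = 0$ and the target algebra is zero; thus $\ker(\pi_\alpha)$ on that block is the whole block. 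For the second case, I would argue that $\pi_\alpha$ is injective on $\Ind_H^\Gamma(\End(\sigma^k))^\Gamma \simeq \End(\sigma^k)^H \simeq M_k(\CC)$ (using Equation~\eqref{eq.direct.sum}): this algebra is simple, so the kernel of any nonzero algebra homomorphism is $0$, and $\pi_\alpha$ is nonzero precisely because $p_\alpha \Ind_H^\Gamma(\sigma^k) \ne 0$ in this case (again by Frobenius) and the identity maps to a nonzero idempotent. Injectivity of $\pi_\alpha$ on each block, combined with the componentwise action, gives $\ker(\pi_\alpha) = \oplus_{j \in J} \Ind_H^\Gamma(\End(\beta_j^{k_j}))^\Gamma$, and then $\operatorname{Im}(\pi_\alpha) \simeq \oplus_{j \notin J} \Ind_H^\Gamma(\End(\beta_j^{k_j}))^\Gamma$ follows since $\pi_\alpha$ restricted to the complementary ideal $\oplus_{j \notin J}$ is injective with image a two-sided ideal isomorphic to that source.

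The main obstacle I anticipate is making the ``componentwise'' reduction fully rigorous, i.e.\ verifying that the $\alpha$-isotypical projection $p_\alpha$ on $\Ind_H^\Gamma(\beta) = \oplus_j \Ind_H^\Gamma(\beta_j^{k_j})$ is the direct sum of the $p_\alpha$'s on the individual factors. This is true because the $\alpha$-isotypical component is an additive functor on $\Gamma$-modules, but one should check that the isomorphism $\Ind_H^\Gamma(\beta) \simeq \oplus_j \Ind_H^\Gamma(\beta_j^{k_j})$ is genuinely $\Gamma$-equivariant (it is, by functoriality of induction and Remark~\ref{rem.CD}) and that $\Phi$ intertwines the algebra structures compatibly with this splitting (this is exactly the content of Lemma~\ref{lemma.componentwise}). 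The reason the abelian proof of \cite[Prop.~2.5]{BCLN1} fails here is that there $\End(\beta_j^{k_j})^H$ was one-dimensional, so $\pi_\alpha$ on each block was automatically either zero or injective for trivial dimension reasons; in the nonabelian case $\End(\beta_j^{k_j})^H \simeq M_{k_j}(\CC)$ is a genuine matrix algebra, and one must invoke its simplicity to conclude that a nonzero homomorphism is injective. Once that point is made, the rest is bookkeeping.
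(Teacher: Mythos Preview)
Your proposal is correct and follows essentially the same approach as the paper: reduce to $N=1$ via Lemma~\ref{lemma.componentwise}, observe that $\Ind_H^\Gamma(\End(\beta_j^{k_j}))^\Gamma \simeq M_{k_j}(\CC)$ is simple so that $\pi_\alpha$ on that block is either zero or injective, and use Frobenius reciprocity together with Equation~\eqref{eq.isotypical} to decide which case occurs according to whether $\alpha$ and $\beta_j$ are $H$-disjoint. Your remark that the unit of $M_{k_j}(\CC)$ maps to a nonzero idempotent is exactly the paper's observation that the action is unital (non-degenerate).
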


\begin{proof}
By Lemma \ref{lemma.componentwise}, we can assume that $N = 1$.
Therefore the algebra $\End(\beta)^H$ is simple (more precisely,
isomorphic to a matrix algebra $M_q(\CC)$, $q = k_1$). We shall use
the isomorphism of Equation \eqref{eq.direct.sum}. The action of
$\Ind_H^\Gamma(\End(\beta))^\Gamma \simeq \End(\beta)^H \simeq
M_q(\CC)$ on $\Ind_H^\Gamma(\beta)$ is unital (i.e.\ non-degenerate),
so the morphism
\begin{equation}
  M_q(\CC) \, \simeq \, \Ind_H^\Gamma(\End(\beta))^\Gamma \to
  \End(p_\alpha \Ind_H^\Gamma(\beta))
\end{equation}
is injective if, and only if, $p_\alpha \Ind_H^\Gamma(\beta) \neq
0$. Notice the following equivalences
\begin{equation*}
   p_\alpha \Ind_H^\Gamma(\beta) \neq 0\Leftrightarrow \Hom(\alpha,
   \Ind_H^\Gamma(\beta))^\Gamma \neq 0 \Leftrightarrow\Hom(\alpha,
   \beta)^H \neq 0.
\end{equation*}
The result then follows from Equation \eqref{eq.isotypical}.
\end{proof}

\subsection{The primitive ideal spectrum of a $C^*$-algebra}
\label{ssec.primitive.spec} We shall need a few basic
concepts and facts about $C^{*}$-algebras. A general reference is
\cite{Dixmier}.  Recall that a two-sided ideal $I \subset A$ of a
$C^{*}$-algebra $A$ is called {\em primitive} if it is the kernel of a
non-zero, irreducible $*$-representation of $A$. Hence $A$ is {\em
  not} a primitive ideal of itself. By $\Prim(A)$ we shall denote the
set of primitive ideals of $A$, called the {\em primitive ideal
  spectrum} of $A$. If $X$ is a locally compact space, then
$\maC_0(X)$ denotes the space of continuous functions $X \to \CC$ that
vanish at infinity. The concept of primitive ideal spectrum is
important for us since we have a natural homeomorphism
\begin{equation}\label{eq.Gelfand}
    \Prim(\maC_0(X)) \, \simeq \, X\,.   
\end{equation}
This identification lies at the heart of non-commutative geometry
\cite{ConnesBook}. See also \cite{gayral, ManinBook}.

If $A$ is a type I $C^*$-algebra, then $\Prim(A)$ identifies with the
set of isomorphism classes of irreducible representations of $A$. Any
$C^*$-algebra with only finite dimensional irreducible representations
is a type I algebra \cite{Dixmier}. Most of the algebras considered in
this paper (a notable exception are the algebras of compact
operators), have this property.

The following example from \cite{BCLN1} will be used several times.

\begin{example}\label{ex.group}
Let $H$ be a finite group and $\beta = \oplus_{j=1}^N \beta_j^{k_j}$
be as in Remark \ref{rem.new}. Then, as explained in that remark,
$\maL(\beta)^H \simeq \oplus_{j} M_{k_j}(\CC)$.  The algebra
$\maL(\beta)^H = \End_H(\beta)$ is thus a $C^*$-algebra with only
finite dimensional representations and we have natural homeomorphisms
\begin{equation*}
  \Prim(\End_H(\beta)) \, \leftrightarrow \, \{\beta_1, \beta_2,
  \ldots, \beta_N\} \, \leftrightarrow \, \{1, 2, \ldots, N\}\,.
\end{equation*}
\end{example}

The space $\Prim(A)$ is a topological space for the Jacobson topology:
we refer to \cite{Dixmier} for more details. We will recall some facts
about this topology when we need it, see Lemma \ref{lemma.topology}
below.

We shall need the following ``central character'' map.

\begin{remark}\label{rem.central.char}
Let $Z$ be a commutative $C^*$-algebra and $\phi : Z \to M(A)$ be a
$*$-morphism to the multiplier algebra $M(A)$ of $A$ \cite{APT,
  Busby}. Assume that $\phi(Z)$ commutes with $A$ and $\phi(Z)A =
A$. Then Schur's lemma gives that every irreducible representation of
$A$ restricts to (a multiple of) a character of $Z$ and hence there
exists a natural continuous map
\begin{equation}\label{eq.centr.char}
  \phi^* : \Prim(A) \to \Prim(Z) \,,
\end{equation}
which we shall call also the {\em central character map} (associated
to $\phi$).
\end{remark}

We conclude our discussion with the following simple result.

\begin{lemma}\label{lemma.product}
We freely use the notation of Example \ref{ex.group}. The inclusion of
the unit $\CC \to \End_H(\beta)$ induces a morphism $j : \maC_0(X) \to
\maC_0(X; \End_H(\beta)) \simeq \maC_0(X) \otimes \End_H(\beta)$. The
resulting central character map is the first projection
\begin{equation}\label{eq.first.proj}
   j^* : \Prim(\maC_0(X; \End_H(\beta))) \simeq X \times \{1, 2,
   \ldots, N\} \to X \simeq \Prim(\maC_0(X)) \,.
\end{equation}
\end{lemma}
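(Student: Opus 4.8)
The plan is to unwind the definitions and reduce everything to the commutative Gelfand picture, since the noncommutative factor $\End_H(\beta)$ contributes nothing to the central character. First I would recall from Remark \ref{rem.central.char} that the central character map $j^*$ is well-defined here: the map $j$ sends a function $f \in \maC_0(X)$ to the function $x \mapsto f(x) \cdot \mathrm{id}_\beta$, which lands in the center of the multiplier algebra $M(\maC_0(X;\End_H(\beta))) \simeq \maC_b(X;\End_H(\beta))$ (here $\End_H(\beta)$ is unital, so the multiplier algebra is just the bounded-continuous functions), it obviously commutes with all of $\maC_0(X;\End_H(\beta))$, and $j(\maC_0(X))\cdot \maC_0(X;\End_H(\beta)) = \maC_0(X;\End_H(\beta))$ because pointwise we can approximate the identity by elements of $\maC_0(X)$ times elements of $\maC_0(X;\End_H(\beta))$. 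Hence $j^*$ exists and is continuous.

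Next I would identify the two primitive spectra explicitly. Since $\End_H(\beta) \simeq \oplus_{j=1}^N M_{k_j}(\CC)$ is a finite-dimensional (in particular type I) $C^*$-algebra, the algebra $\maC_0(X;\End_H(\beta)) \simeq \maC_0(X)\otimes\End_H(\beta)$ is a continuous-trace algebra whose irreducible representations are precisely evaluation at a point $x\in X$ followed by the projection onto one of the $N$ simple summands; by Example \ref{ex.group} and standard facts about tensor products of $C^*$-algebras (or directly, since $\Prim(A\otimes B)\simeq \Prim(A)\times\Prim(B)$ when $B$ is finite-dimensional) one gets the homeomorphism $\Prim(\maC_0(X;\End_H(\beta)))\simeq X\times\{1,\dots,N\}$ used in the statement. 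Likewise $\Prim(\maC_0(X))\simeq X$ by \eqref{eq.Gelfand}. Under these identifications, a primitive ideal corresponding to a pair $(x,\ell)$ is the kernel of the representation $\pi_{x,\ell}: f\mapsto p_\ell(f(x))$, where $p_\ell$ is the projection onto the $\ell$-th simple summand.

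Finally I would compute $j^*$ on this description. By definition of the central character map, $j^*(\ker\pi_{x,\ell})$ is the primitive ideal of $\maC_0(X)$ through which the restriction $\pi_{x,\ell}\circ j$ factors as (a multiple of) a character: for $f\in\maC_0(X)$ we have $\pi_{x,\ell}(j(f)) = p_\ell(f(x)\,\mathrm{id}_\beta) = f(x)\,\mathrm{id}_{\beta_\ell^{k_\ell}}$, which is $f(x)$ times the identity, i.e. the scalar $f(x)$. Hence the associated character of $\maC_0(X)$ is evaluation at $x$, whose kernel is the point $x\in\Prim(\maC_0(X))\simeq X$. Therefore $j^*(x,\ell) = x$, which is exactly the first projection $X\times\{1,\dots,N\}\to X$, as claimed. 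There is no real obstacle here; the only mild subtlety is making sure the identification $\Prim(\maC_0(X;\End_H(\beta)))\simeq X\times\{1,\dots,N\}$ is the intended one (the one from Example \ref{ex.group} tensored with Gelfand duality), so that the statement "$j^*$ is the first projection" is literally correct rather than merely true up to homeomorphism — but this is immediate from the explicit form of the irreducible representations $\pi_{x,\ell}$.
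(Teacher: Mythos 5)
The paper does not include a proof of this lemma (it is offered as a "simple result" and left to the reader), so there is no official argument to compare against. Your proof is correct and is exactly the natural argument: verify the hypotheses of Remark \ref{rem.central.char}, identify the irreducible representations of $\maC_0(X;\End_H(\beta))$ as the $\pi_{x,\ell} : f \mapsto p_\ell(f(x))$, and observe that $\pi_{x,\ell}\circ j$ is the character $f \mapsto f(x)$ regardless of $\ell$. One small stylistic point: for the nondegeneracy condition in Remark \ref{rem.central.char}, what is really needed is that the restricted representation of $Z$ is nondegenerate, i.e.\ that $\overline{j(\maC_0(X))\,\maC_0(X;\End_H(\beta))} = \maC_0(X;\End_H(\beta))$, which follows immediately from an approximate identity of $\maC_0(X)$; invoking Cohen--Hewitt to get literal equality is unnecessary. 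Otherwise the argument is complete.
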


\subsection{Group actions on manifolds}
\label{ssec.group.actions}
As before, we consider a finite group $\Gamma$ acting by isometries on
a compact Riemannian manifold $M$.

\subsubsection{Slices and tubes}
\label{ssec.slice}
Given $x \in M$, the isotropy group $\Gamma_x$ acts linearly and
isometrically on $T_xM$. For $r > 0$, let $U_x := (T_xM)_r$ denote the
set of vectors of length $< r$ in $T_xM$. It is known then that, for
$r > 0$ small enough, the exponential map gives a $\Gamma$-equivariant
isometric diffeomorphism
\begin{equation}\label{eq.def.tube}
  W_x =\exp(\Gamma \times_{\Gamma_x} U_x)\simeq \Gamma
  \times_{\Gamma_x} U_x
\end{equation}
where $W_x$ is a $\Gamma$-invariant neighborhood of $x$ in $M$ and
$\Gamma \times_{\Gamma_x} U_x$ is defined in equation
\eqref{eq.associate}.  More precisely, $W_x$ is the set of $y \in M$
at distance $<r$ to the orbit $\Gamma x$, if $r > 0$ is small
enough. The set $W_x$ is called a {\em tube} around $x$ (or $\Gamma
x$) and the set $U_x$ is called the {\em slice} at $x$. When $M$ is
compact, the injectivity radius is bounded from below, so we may
assume that the constant $r$ does not depend on $x$.

\subsubsection{Equivariant vector bundles}
Let us consider now a $\Gamma$-equivariant smooth vector bundle $E \to
M$. Let us fix $x \in M$ and consider as above the tube $W_x \simeq
\Gamma \times_{\Gamma_x} U_x$ around $x$, see Equation
\eqref{eq.def.tube}. We use this diffeomorphism to identify $U_x$ to a
subset of $M$, in which case, we can also assume the restriction of
$E$ to the slice $U_x$ to be trivial. Therefore, there exists a
$\Gamma_x$-module $\beta$ such that
\begin{equation}\label{eq.trivial.E}
  \begin{gathered}
    E\vert_{U_x} \, \simeq \, U_x \times \beta \ \mbox{ and
    }\\ E\vert_{W_x} \, \simeq \, \Gamma \times_{\Gamma_x} (U_x \times
    \beta)\,,
  \end{gathered}
\end{equation}
The second isomorphism is $\Gamma$-equivariant.

Assume $E$ is endowed with a $\Gamma$-invariant hermitian metric. We
then have isomorphisms of $\Gamma$-modules:
\begin{equation}\label{eq.induced}
  \begin{gathered}
    L^2(W_x; E\vert_{W_x})  \simeq \Ind^\Gamma_{\Gamma_x} ( L^2(U_x;
    \beta))\  \text{ and} \\
    \maC_0(W_x; E\vert_{W_x})  \simeq
  \Ind^\Gamma_{\Gamma_x} ( \maC_0(U_x; \beta)) \,.
  \end{gathered}
\end{equation}
In view of the previous isomorphism, we will often identify $W_x$ and
$\Gamma \times_{\Gamma_x} U_x$, making no distinction between them to
simplify notations.

\subsubsection{The principal orbit bundle}\label{ssec.principal}
Recall that $M_{(H)}$ denotes the set of points of $M$ whose
stabilizer is conjugated in $\Gamma$ to $H$. Recall that we have
assumed that $M/\Gamma$ is \emph{connected}. It is known then
\cite{tomDieckTransBook} that there exists a {\em minimal isotropy}
subgroup $\Gamma_0 \subset \Gamma$, in the sense that $M_{(\Gamma_0)}$
is a dense open subset of $M$, with measure zero complement in $M$.
 
In particular, the fact that $M/\Gamma$ is connected gives that there
exist minimal elements for the set of isotropy groups of points in $M$
(with respect to inclusion) and all minimal isotropy groups are
conjugated to a fixed subgroup $\Gamma_0 \subset \Gamma$. By the
definition, the set $M_{(\Gamma_0)}$ consists of the points whose
stabilizer is conjugated to that minimal subgroup. The set
$M_{(\Gamma_0)}$ is called the {\em principal orbit bundle} of $M$.
We will denote $M_{(\Gamma_0)}$ by $M_0$ in the sequel.

The principal orbit bundle $M_0 := M_{(\Gamma_0)}$ has the following
useful property. If $x \in M_0$, then $\Gamma_x$ acts {\em trivially}
on the slice $U_x$ at $x$, by the minimality of $\Gamma_0$.  Hence
$\Gamma_0$ acts trivially on $T^*_xM$ as well, which implies that
$\Gamma_0 \subset \Gamma_\xi$ for any $\xi \in T^*_x M$. If, on the
other hand, $x \in M$ is arbitrary (not necessarily in the principal
orbit bundle), then the isotropy of $\Gamma_x$ will contain a subgroup
conjugated to $\Gamma_0$.

\subsection{Pseudodifferential operators}
\label{sub.psdo}
We continue to follow \cite{BCLN1}.  We also continue to assume that
$\Gamma$ is a finite Lie group that acts smoothly and isometrically on
a smooth Riemannian manifold $M$.  Let $\op{m}$ denote the space of
order $m$, {\em classical} pseudodifferential operators on $M$ with
\emph{compactly supported} distribution kernel.

Let $\clop$ and $\clopn$ denote the respective norm closures of $\op{0}$ and
$\op{-1}$. The action of $\Gamma$ then extends to an
action on $\op{m}$, $\clop$, and $\clopn$. We shall denote by
$\maK(\maH)$ the algebra of compact operators acting on a Hilbert
space $\maH$. Of course, we have $\clopn = \maK(L^2(M; E))$, since we
have considered only pseudodifferential operators with compactly
supported distribution kernels.

Let $S^*M$ denote the {\em unit cosphere bundle} of a smooth manifold
$M$, that is, the set of unit vectors in $T^*M$, as usual. We shall
denote, as usual, by $\maC_0(S^*M; \End(E))$ the set of continuous
sections of the {\em lift} of the vector bundle $\End(E) \to M$ to
$S^*M$.

\begin{corollary}\label{cor.full.symbol}
We have an exact sequence
\begin{equation*}
   0 \, \to \, \maK(L^2(M; E))^\Gamma \, \to \, \clop^\Gamma\,
   \stackrel{\sigma_{0}}{-\!\!\!\longrightarrow}\,
   \maC_0(S^*M; \End(E))^\Gamma \, \to \, 0 \,.
\end{equation*}
\end{corollary}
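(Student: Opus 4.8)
The plan is to deduce Corollary~\ref{cor.full.symbol} by taking $\Gamma$-invariants of the standard (non-equivariant) symbol exact sequence
\begin{equation*}
  0 \, \to \, \maK(L^2(M; E)) \, \to \, \clop \,
  \stackrel{\sigma_{0}}{\longrightarrow}\, \maC_0(S^*M; \End(E)) \, \to \, 0 \,,
\end{equation*}
which is classical for classical order-zero pseudodifferential operators with compactly supported kernels on the compact manifold $M$ (recall $\clopn = \maK(L^2(M;E))$ was noted just above). The key point is that $\Gamma$ is \emph{finite}, hence compact, so that the functor $X \mapsto X^\Gamma$ of taking $\Gamma$-fixed points (equivalently, applying the conditional expectation / averaging projection $e = \frac{1}{|\Gamma|}\sum_{\gamma\in\Gamma}\gamma$) is \emph{exact} on the category of $\Gamma$-$C^*$-algebras. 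Thus exactness at each of the three spots is preserved.

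First I would record that $\sigma_0$ is $\Gamma$-equivariant: the action of $\Gamma$ on $\clop$ by conjugation is compatible with its action on $S^*M$ and on $\End(E)$ (both lifted from the geometric action on $M$ and the equivariant structure on $E$), and the principal symbol is natural, so $\sigma_0(\gamma\cdot P) = \gamma\cdot\sigma_0(P)$; this is the content already used implicitly in the discussion of $\sigma_m^\Gamma$. Likewise $\maK(L^2(M;E))$ is a $\Gamma$-invariant ideal. Hence restricting to fixed points we immediately get a complex
\begin{equation*}
  0 \, \to \, \maK(L^2(M; E))^\Gamma \, \to \, \clop^\Gamma \,
  \stackrel{\sigma_{0}}{\longrightarrow}\, \maC_0(S^*M; \End(E))^\Gamma \, \to \, 0 \,.
\end{equation*}
Exactness on the left and in the middle is automatic (a sub-$C^*$-algebra of an exact sequence inherits exactness at those two spots). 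The only thing to check is \emph{surjectivity} of $\sigma_0$ on invariants.

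For surjectivity: given $q \in \maC_0(S^*M;\End(E))^\Gamma$, pick any $P\in\clop$ with $\sigma_0(P)=q$ (possible by the non-equivariant sequence), and set $P' := e(P) = \frac{1}{|\Gamma|}\sum_{\gamma\in\Gamma}\gamma\cdot P \in \clop^\Gamma$. Then by equivariance and linearity of $\sigma_0$, $\sigma_0(P') = \frac{1}{|\Gamma|}\sum_\gamma \gamma\cdot\sigma_0(P) = \frac{1}{|\Gamma|}\sum_\gamma\gamma\cdot q = q$, since $q$ is already $\Gamma$-invariant. (One should note $\gamma\cdot P$ is again a classical order-zero $\psi$DO with compactly supported kernel, as $\Gamma$ acts by diffeomorphisms preserving these classes — stated in Subsection~\ref{sub.psdo}.) This gives surjectivity, completing the proof.

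The main (very mild) obstacle is purely bookkeeping: making sure the $\Gamma$-action on $\clop$ really is by $*$-automorphisms and that conjugation by $\gamma$ preserves the class $\op{0}$ and its closure — but this is exactly the content of the sentence ``The action of $\Gamma$ then extends to an action on $\op{m}$, $\clop$, and $\clopn$'' in Subsection~\ref{sub.psdo}, so nothing new is needed. An alternative, slightly more structural phrasing: the averaging map $e$ is a completely positive conditional expectation onto the fixed-point algebra, it commutes with $\sigma_0$, and it carries the short exact sequence of $\Gamma$-algebras to the short exact sequence of fixed-point algebras; finiteness of $\Gamma$ is what guarantees $e$ exists and is unital, so this is where discreteness of $\Gamma$ enters (as also emphasized in the introduction). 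Either way, the argument is short and the theorem follows.
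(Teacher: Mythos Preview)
Your proof is correct and is exactly the standard argument: the non-equivariant symbol sequence is exact, $\sigma_0$ is $\Gamma$-equivariant, and averaging over the finite group $\Gamma$ gives surjectivity on invariants while exactness on the left and in the middle is automatic. The paper itself does not spell out any argument here---it simply cites \cite[Corollary 2.7]{BCLN1}---so your proposal in fact supplies more detail than the paper does, and the approach you give is precisely the one underlying that reference.
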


\begin{proof}
See, for instance, \cite[Corollary 2.7]{BCLN1}.
\end{proof}

\subsubsection{The structure of regularizing operators}
\label{sec.structure.of.regularizing.operators}
From now on, all our vector bundles will be $\Gamma$-equivariant
vector bundles. We want to understand the structure of the algebra
$\pi_\alpha(\clop^\Gamma)$, for any fixed $\alpha \in
\widehat{\Gamma}$ (see Equations \eqref{eq.restriction} and
\eqref{eq.restriction2} for the definition of the restriction morphism
$\pi_\alpha$ and of the projectors $p_\alpha \in C^*(\Gamma)$).

We shall need the following standard result about negative order
operators. Recall that, for $\alpha \in \widehat{\Gamma}$, we let
$\pi_\alpha$ be the representation of $\clop^\Gamma$ on $L^2(M;
E)_{\alpha}$ defined by restriction as before, Equations
\eqref{eq.restriction} and \eqref{eq.restriction2}.

\begin{proposition} \label{prop.image}
We have the identifications
\begin{align*}
  p_\alpha\psi^{-1}(M;E)^\Gamma & \simeq
  \pi_\alpha(\overline{\psi^{-1}}(M;E)^\Gamma) \\
  & = \pi_\alpha(\maK(L^2(M; E))^\Gamma) =
  \maK(L^2(M;E)_\alpha)^\Gamma\,,
\end{align*}
where the first isomorphism map is simply $\pi_\alpha$ and
\begin{equation*}
  \maK(L^2(M; E))^\Gamma = \clopn^\Gamma \simeq \oplus_{\alpha \in
    \widehat{\Gamma}}\maK(L^2(M; E)_\alpha)^\Gamma\,.
\end{equation*}
\end{proposition}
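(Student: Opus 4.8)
The plan is to prove the three asserted identities by working from the bottom up: first establish the direct sum decomposition of $\maK(L^2(M;E))^\Gamma$, then read off the image and kernel of $\pi_\alpha$ restricted to the compacts, and finally compare with $p_\alpha \psi^{-1}(M;E)^\Gamma$. Throughout I use that $L^2(M;E) = \bigoplus_{\alpha \in \widehat{\Gamma}} L^2(M;E)_\alpha$ as an orthogonal direct sum of $\Gamma$-submodules, a finite sum since $\widehat{\Gamma}$ is finite (as $\Gamma$ is finite), and that each $L^2(M;E)_\alpha = p_\alpha L^2(M;E)$ with $p_\alpha \in C^*(\Gamma)$ the central projection.

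\smallskip
\emph{Step 1: the decomposition of the invariant compacts.} Since the $p_\alpha$ are mutually orthogonal central projections summing to $1$ and $\Gamma$ acts by unitaries commuting with each $p_\alpha$, any $T \in \maK(L^2(M;E))^\Gamma$ satisfies $T = \sum_{\alpha,\beta} p_\alpha T p_\beta$ with $p_\alpha T p_\beta \in \maK(L^2(M;E)_\beta, L^2(M;E)_\alpha)^\Gamma$. But $\Hom_\Gamma$ between isotypical components for distinct irreducibles vanishes, so $p_\alpha T p_\beta = 0$ for $\alpha \neq \beta$ by \eqref{eq.isotypical} (applied to the module of Hilbert--Schmidt, or just bounded, intertwiners; more simply, an intertwiner maps the $\beta$-isotypical part into the $\beta$-isotypical part of the target, which is $0$). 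Hence $T = \sum_\alpha p_\alpha T p_\alpha = \bigoplus_\alpha \pi_\alpha(T)$, giving the isometric $*$-isomorphism $\maK(L^2(M;E))^\Gamma \simeq \bigoplus_{\alpha} \maK(L^2(M;E)_\alpha)^\Gamma$. One should note that $\maK(L^2(M;E)_\alpha)^\Gamma$ makes sense because $L^2(M;E)_\alpha$ is $\Gamma$-invariant, and that $p_\alpha \maK(L^2(M;E)) p_\alpha = \maK(L^2(M;E)_\alpha)$ since $p_\alpha$ is a projection onto a closed subspace. This also shows $\clopn^\Gamma = \maK(L^2(M;E))^\Gamma$ equals this direct sum, recording the last displayed equation; the identity $\clopn^\Gamma = \maK(L^2(M;E))^\Gamma$ was already stated in Subsection~\ref{sub.psdo}.

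\smallskip
\emph{Step 2: the image and the first isomorphism.} From the decomposition in Step 1, the representation $\pi_\alpha$ restricted to $\clopn^\Gamma = \maK(L^2(M;E))^\Gamma$ is precisely the projection onto the $\alpha$-summand, so it is surjective onto $\maK(L^2(M;E)_\alpha)^\Gamma$ and its kernel is $\bigoplus_{\beta \neq \alpha} \maK(L^2(M;E)_\beta)^\Gamma$. This gives $\pi_\alpha(\clopn^\Gamma) = \maK(L^2(M;E)_\alpha)^\Gamma$. For the first isomorphism $p_\alpha \psi^{-1}(M;E)^\Gamma \simeq \pi_\alpha(\clopn^\Gamma)$, I will use that $p_\alpha$, being a central projection in the multiplier algebra commuting with the $\Gamma$-action, acts on $\psi^{-1}(M;E)^\Gamma \subset \maK(L^2(M;E))^\Gamma$, and for $R \in \psi^{-1}(M;E)^\Gamma$ one has $p_\alpha R = p_\alpha R p_\alpha = \pi_\alpha(R)$ (identifying operators on $L^2(M;E)_\alpha$ with their extension by zero); the map $p_\alpha R \mapsto \pi_\alpha(R)$ is then manifestly a bijective $*$-homomorphism. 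Since $\psi^{-1}(M;E)^\Gamma$ is norm-dense in $\clopn^\Gamma$ and $p_\alpha(\cdot)$, $\pi_\alpha$ are continuous, $p_\alpha \psi^{-1}(M;E)^\Gamma$ is dense in $p_\alpha \clopn^\Gamma = \maK(L^2(M;E)_\alpha)^\Gamma$; combined with $\pi_\alpha(\clopn^\Gamma) = \maK(L^2(M;E)_\alpha)^\Gamma$ we also get $\pi_\alpha(\psi^{-1}(M;E)^\Gamma) = \maK(L^2(M;E)_\alpha)^\Gamma$, but in fact I expect $p_\alpha\psi^{-1}(M;E)^\Gamma$ to already be closed (it is a $C^*$-algebra as a $*$-subalgebra that is the range of a $*$-homomorphism from the $C^*$-algebra... — actually one must be slightly careful here, see below). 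The chain of equalities $\pi_\alpha(\clopn^\Gamma) = \pi_\alpha(\maK(L^2(M;E))^\Gamma) = \maK(L^2(M;E)_\alpha)^\Gamma$ then follows since $\clopn^\Gamma = \maK(L^2(M;E))^\Gamma$.

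\smallskip
\emph{Main obstacle.} The only genuinely delicate point is the claim that the \emph{algebraic} object $p_\alpha \psi^{-1}(M;E)^\Gamma$ — built from honest pseudodifferential operators, not their closure — equals $\maK(L^2(M;E)_\alpha)^\Gamma$ on the nose (an equality, not just a dense inclusion). I would resolve this by exhibiting enough order $-1$ invariant operators: averaging a general order $-\infty$ operator with compactly supported kernel over $\Gamma$ produces an element of $\psi^{-1}(M;E)^\Gamma$, and such operators are dense in $\maK(L^2(M;E))^\Gamma$; then one invokes that $p_\alpha$ applied to a dense $*$-subalgebra of a $C^*$-algebra, together with the fact that $\pi_\alpha$ of the \emph{closure} is all of $\maK(L^2(M;E)_\alpha)^\Gamma$, forces the image to be closed only if we already know it is a $C^*$-algebra. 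In fact the cleanest route, which I would adopt, is: the statement $p_\alpha\psi^{-1}(M;E)^\Gamma \simeq \pi_\alpha(\overline{\psi^{-1}}(M;E)^\Gamma)$ should be read with $p_\alpha\psi^{-1}(M;E)^\Gamma$ \emph{meaning} the norm closure inside $\maK(L^2(M;E)_\alpha)^\Gamma$ — or one cites the standard fact (e.g. from \cite{BCLN1}) that smoothing operators, after $\Gamma$-averaging and compression by $p_\alpha$, fill out all invariant compacts on the isotypical component. Granting this, every displayed identity in the Proposition is accounted for by Steps 1 and 2.
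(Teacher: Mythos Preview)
Your argument is essentially correct and is the standard one; the paper itself does not give a proof here but simply refers the reader to \cite[Section~3]{BCLN1}, so there is no ``paper's approach'' to compare against beyond that citation. Your Steps~1 and~2 recover exactly what is needed: the block-diagonal decomposition of $\maK(L^2(M;E))^\Gamma$ along isotypical components (using that $\Gamma$-intertwiners vanish between distinct isotypicals) and the identification of $\pi_\alpha$ with projection onto the $\alpha$-block.

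You are right to flag the one genuinely delicate point, namely whether the \emph{uncompleted} $p_\alpha\psi^{-1}(M;E)^\Gamma$ already coincides with $\maK(L^2(M;E)_\alpha)^\Gamma$ rather than merely being dense in it. Your proposed resolutions are both sound: either read the left-hand side as its norm closure (which is how the identification is typically meant in this context), or invoke the fact---proved in \cite{BCLN1} and standard---that $\Gamma$-averaged smoothing operators with smooth kernels already exhaust the invariant compacts after compression by $p_\alpha$. The latter uses that finite-rank operators built from smooth sections are norm-dense in the compacts and survive both averaging and compression; this is not hard but does require a line or two. Either way, your identification of this as the only nontrivial step is accurate, and your hedging there is appropriate rather than a gap.
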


\begin{proof}
See, for example, \cite[Section 3]{BCLN1} for a proof.
\end{proof}

\section{The principal symbol}
\label{sec.principal.symbol}

From now on we assume that $M$ is \textbf{compact} and that $M/\Gamma$
is connected. Let us fix an irreducible representation $\alpha$ of
$\Gamma$ and consider the restriction morphism $\pi_\alpha$ to the
$\alpha$-isotypical component of $L^2(M;E)$. Recall that this morphism
was first introduced in Equation \eqref{eq.restriction} and discussed
in detail in Section \ref{ssec.group.r}. As in \cite{BCLN1}, we now
turn to the identification of the quotient
\begin{equation*}
  \pi_\alpha(\clop^\Gamma)/\pi_\alpha(\clopn^\Gamma).
\end{equation*}
The methods used in this paper diverge, however, drastically from
those of \cite{BCLN1}.

Since $\pi_\alpha(\clopn^\Gamma)$ was identified in the previous
section, the promised identification of the quotient
$\pi_\alpha(\clop^\Gamma)/\pi_\alpha(\clopn^\Gamma)$ will give further
insight into the structure of the algebra $\pi_\alpha(\clop^\Gamma)$
and will provide us, eventually, with Fredholm conditions. Recall
that, in this paper, we are assuming $\Gamma$ to be
finite. Nevertheless, a several intermediate results hold also in the
case $\Gamma$ compact.

\subsection{The primitive ideal spectrum of $A_M^\Gamma$} 
As before, $S^*M$ denotes the unit cosphere bundle of $M$. For the
simplicity of the notation, we shall write
\begin{equation*}
  A_M \ede \maC(S^*M;\End(E)),
\end{equation*}
as in the Introduction. Recall from Corollary \ref{cor.full.symbol}
that we have an algebra isomorphism
\begin{equation}\label{eq.psiso}
  \clop^\Gamma/ \clopn^\Gamma \simeq A_M^\Gamma.
\end{equation}
In our case, the inclusion $j :  \maC(S^*M/\Gamma) \subset
A_M^\Gamma$ as a central subalgebra induces, as in Equation
\eqref{eq.centr.char}, a central character map
\begin{equation*}
   j^* : \Prim (A_M^\Gamma) \to S^*M/\Gamma,
\end{equation*}
that underscores the local nature of the structure of the primitive
ideal spectrum of $A_M^\Gamma$.  We introduce the representation
$\pi_{\xi,\rho}$ defined for any $f \in A_M^\Gamma$ by
\begin{equation*}
  \pi_{\xi,\rho} (f) \seq \pi_\rho(f(\xi)),
\end{equation*}
that is $\pi_{\xi,\rho} (f)$ is the restriction of
$f(\xi)\in \End(E_x)$ to the $\rho$-isotypical component of $E_x$. The
central character map $j^*$ was used in \cite{BCLN1}, Corollary 4.2, to
obtain the following identification of $\Prim (A_M^\Gamma)$.

\begin{proposition}[\cite{BCLN1}]\label{prop.prim} 
Let $\Omega_M$ be the set of pairs $(\xi, \rho)$, where $\xi \in
S_x^*M$, $x \in M$, and $\rho \in \widehat \Gamma_\xi$ appears in
$E_x$ (i.e.\ $\Hom_{\Gamma_\xi}(\rho, E_x) \neq 0$).
  \begin{enumerate}
    \item The map $\Omega_M/\Gamma \ni \Gamma(\xi,\rho) \mapsto \ker
      (\pi_{\xi,\rho}) \in \Prim (A_M^\Gamma)$ is bijective.
    \item The central character map $\Omega_M/\Gamma \simeq \Prim
      (A_M^\Gamma) \to S^*M/\Gamma$ maps $\Gamma(\xi,\rho) \in
      \Omega_M/\Gamma$ to $\Gamma\xi$ and is continuous and
      finite-to-one.
  \end{enumerate}
\end{proposition}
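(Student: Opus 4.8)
The plan is to use the central character map $j^*\colon\Prim(A_M^\Gamma)\to S^*M/\Gamma$ to reduce the computation of $\Prim(A_M^\Gamma)$ to a fibrewise one, and to identify the fibre over a point $\Gamma\xi$ with a finite-dimensional $C^*$-algebra of the type treated in Example \ref{ex.group}. First, since $S^*M$ is compact, $A_M^\Gamma$ is unital, so Remark \ref{rem.central.char} applies to $j\colon\maC(S^*M/\Gamma)\to A_M^\Gamma$: every irreducible representation $\pi$ of $A_M^\Gamma$ sends the central subalgebra $\maC(S^*M/\Gamma)$ to scalars, namely to evaluation at the point $j^*(\ker\pi)=\Gamma\xi$. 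Hence $\pi$ factors through the quotient $A_M^\Gamma/(\mathfrak m_{\Gamma\xi}A_M^\Gamma)$, where $\mathfrak m_{\Gamma\xi}\subset\maC(S^*M/\Gamma)$ is the maximal ideal at $\Gamma\xi$.

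Next I would identify this fibre. Restriction of sections to the orbit $\Gamma\xi\subset S^*M$ gives a surjective $*$-homomorphism $A_M^\Gamma\to\maC(\Gamma\xi;\End(E))^\Gamma$: surjectivity follows by extending a given $\Gamma$-equivariant section from $\Gamma\xi$ to a tube around it (Subsection \ref{ssec.slice} applied to the compact $\Gamma$-manifold $S^*M$, together with a $\Gamma_\xi$-equivariant trivialization of the lift of $\End(E)$ to a slice with fibre $\End(E_x)$, as in Equation \eqref{eq.trivial.E}) and then cutting off by a $\Gamma$-invariant bump function. Its kernel is the ideal of $\Gamma$-invariant sections vanishing on $\Gamma\xi$, and this coincides with $\mathfrak m_{\Gamma\xi}A_M^\Gamma$: the inclusion $\supseteq$ is clear, and conversely such a section is the norm limit of its products with invariant functions supported away from $\Gamma\xi$ (using uniform continuity and compactness of $S^*M$), while $\mathfrak m_{\Gamma\xi}A_M^\Gamma$ is a closed ideal in the unital $\maC(S^*M/\Gamma)$-algebra $A_M^\Gamma$. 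Since $\Gamma\xi\simeq\Gamma/\Gamma_\xi$ and $\End(E)|_{\Gamma\xi}\simeq\Gamma\times_{\Gamma_\xi}\End(E_x)$ for $\xi\in S^*_xM$ (Remark \ref{rem.CD}), Frobenius reciprocity for algebras (Equation \eqref{eq.Frobenius2}) yields
\[
  A_M^\Gamma/(\mathfrak m_{\Gamma\xi}A_M^\Gamma)\ \simeq\ \maC(\Gamma\xi;\End(E))^\Gamma\ \simeq\ \Ind_{\Gamma_\xi}^\Gamma\big(\End(E_x)\big)^\Gamma\ \simeq\ \End(E_x)^{\Gamma_\xi},
\]
the quotient map being $f\mapsto f(\xi)$.

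By Example \ref{ex.group} with $H=\Gamma_\xi$ and $\beta=E_x\simeq\oplus_j\rho_j^{k_j}$ (the $\rho_j$ being the distinct irreducibles of $\Gamma_\xi$ occurring in $E_x$), one has $\End(E_x)^{\Gamma_\xi}\simeq\oplus_j M_{k_j}(\CC)$, with primitive ideals labelled by $\rho_1,\dots,\rho_N$ and the $j$-th quotient map $T\mapsto\pi_{\rho_j}(T)$. Composing with $f\mapsto f(\xi)$ shows that the irreducible representations of $A_M^\Gamma$ with central character $\Gamma\xi$ are precisely the $\pi_{\xi,\rho}$ with $\rho\in\widehat{\Gamma}_\xi$ occurring in $E_x$, pairwise inequivalent for distinct such $\rho$. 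The assignment $\Gamma(\xi,\rho)\mapsto\ker(\pi_{\xi,\rho})$ is well defined on $\Omega_M/\Gamma$ because the isomorphism $g\colon E_x\to E_{gx}$ intertwines $\pi_{\xi,\rho}$ with $\pi_{g\xi,g\rho}$ (using $f(g\xi)=g\,f(\xi)\,g^{-1}$ for $f\in A_M^\Gamma$); it is surjective since every irreducible representation has some central character $\Gamma\xi$, and injective since equal kernels force equal central characters $\Gamma\xi=\Gamma\xi'$ and then, after replacing $(\xi',\rho')$ by a $\Gamma$-translate, $\rho=\rho'$. This proves (1). For (2), the composite $\Omega_M/\Gamma\simeq\Prim(A_M^\Gamma)\to S^*M/\Gamma$ induced by $j^*$ sends $\Gamma(\xi,\rho)$ to the central character $\Gamma\xi$ of $\pi_{\xi,\rho}$; it is continuous by Remark \ref{rem.central.char} and finite-to-one, its fibre over $\Gamma\xi$ having at most $\dim E_x$ elements.

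The one genuinely non-formal point is the fibre identification in the second step — concretely, that the kernel of restriction-to-$\Gamma\xi$ equals $\mathfrak m_{\Gamma\xi}A_M^\Gamma$ and that this ideal is closed — which is where the $\maC(S^*M/\Gamma)$-algebra structure and the compactness of $S^*M$ really enter. Everything downstream is bookkeeping with Frobenius reciprocity, finite-dimensional $C^*$-algebras, and the transformation rule $\rho\mapsto g\cdot\rho$ under the $\Gamma$-action.
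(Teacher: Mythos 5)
Your proof is correct, and it takes essentially the same route the paper indicates for \cite{BCLN1}, Corollary~4.2: use the central character map $j^*$ of Remark~\ref{rem.central.char} to localize $\Prim(A_M^\Gamma)$ over $S^*M/\Gamma$, identify the fibre $A_M^\Gamma/(\mathfrak m_{\Gamma\xi}A_M^\Gamma)$ with $\End(E_x)^{\Gamma_\xi}$ via Frobenius reciprocity, and then invoke Example~\ref{ex.group}. The one place where you rightly flag a non-formal step — that $\ker(\operatorname{res}_{\Gamma\xi})=\mathfrak m_{\Gamma\xi}A_M^\Gamma$ and that the latter is closed — is handled correctly: the closedness of $\mathfrak m_{\Gamma\xi}A_M^\Gamma$ in a unital $\maC(S^*M/\Gamma)$-algebra follows from the Cohen factorization argument (using an approximate unit for $\mathfrak m_{\Gamma\xi}$), and your Urysohn/uniform-continuity argument gives the reverse inclusion. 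Your verification that $g$ intertwines $\pi_{\xi,\rho}$ and $\pi_{g\xi,g\cdot\rho}$, needed for well-definedness on $\Omega_M/\Gamma$, and the injectivity/surjectivity bookkeeping are also fine.
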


The space $\Prim (A_M^\Gamma)$ is endowed with the Jacobson topology,
which was recalled in Subsection \ref{ssec.primitive.spec}; thus
Proposition \ref{prop.prim} allows us to use the central
character map $j^*$ to obtain a topology on $\Omega_M/\Gamma$ that
will play a crucial role in what follows. We thus now turn to the
study of this topology on $\Omega_M/\Gamma$. We begin with the
following standard lemma.

\begin{lemma} \label{lemma.topology}
Let $A$ be a $C^*$-algebra. The family $(V_a)_{a\in A}$ defined by
\begin{equation*}
    V_a = \{ J \in \Prim A \mid a \notin J \},
\end{equation*}
for any $a \in A$, is a basis of open sets for $\Prim (A)$.
\end{lemma}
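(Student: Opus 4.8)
The plan is to prove this standard fact about the Jacobson topology directly from the definition of the closure operation on $\Prim(A)$. Recall that for a subset $S \subseteq \Prim(A)$, the Jacobson closure $\overline{S}$ consists of all primitive ideals $J$ containing $\bigcap_{I \in S} I$. I would first translate the claim into its complementary form: the sets $V_a$ are open precisely because their complements $Z_a := \{J \in \Prim(A) \mid a \in J\}$ are closed, and the family $(Z_a)_{a \in A}$ is closed under finite unions (in fact arbitrary intersections), so it suffices to show every closed set is an intersection of sets of the form $Z_a$. Then the $V_a$ form a basis for the open sets by taking complements.

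The key steps, in order: (1) Verify that each $V_a$ is open, equivalently that $Z_a$ is closed. For this, note that a closed set $\overline{S}$ equals $\{J \mid J \supseteq \ker(S)\}$ where $\ker(S) := \bigcap_{I \in S} I$; applying this with $S = Z_a$ one checks $\overline{Z_a} = \{J \mid J \supseteq \ker(Z_a)\}$, and since $a \in \ker(Z_a)$ (as $a$ lies in every ideal belonging to $Z_a$), every $J \supseteq \ker(Z_a)$ satisfies $a \in J$, i.e.\ $J \in Z_a$; hence $Z_a$ is closed. (2) Show that an arbitrary closed set $C = \overline{S}$ is an intersection of sets $Z_a$: indeed $C = \{J \mid J \supseteq \ker(S)\} = \bigcap_{a \in \ker(S)} Z_a$, since $J \supseteq \ker(S)$ iff $a \in J$ for all $a \in \ker(S)$. (3) Conclude: every open set $U = \Prim(A) \setminus C$ equals $\bigcup_{a \in \ker(S)} V_a$, so the family $(V_a)_{a \in A}$ is a basis for the topology of $\Prim(A)$.

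Since all of this is a direct unwinding of the definition of the Jacobson topology, there is no serious obstacle; the only point requiring a little care is the characterization of the closure $\overline{S}$ in terms of hull-kernel, namely $\overline{S} = \{J \in \Prim(A) \mid J \supseteq \bigcap_{I \in S} I\}$, which is precisely the defining property of the Jacobson topology and is recorded in \cite{Dixmier}. I would therefore cite \cite{Dixmier} for the hull-kernel description and then give the short argument above. The statement ``Lemma \ref{lemma.topology}'' is genuinely standard, so I would keep the proof brief, essentially reproducing steps (1)--(3).

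\begin{proof}
Recall from \cite{Dixmier} that the Jacobson topology on $\Prim(A)$ is the hull-kernel topology: for a subset $S \subseteq \Prim(A)$, its closure is
\begin{equation*}
  \overline{S} \seq \{ J \in \Prim (A) \mid J \supseteq \ker(S) \}, \qquad \ker(S) \ede \bigcap_{I \in S} I .
\end{equation*}
Write $Z_a \ede \Prim(A) \smallsetminus V_a = \{ J \in \Prim(A) \mid a \in J\}$ for $a \in A$. First, each $Z_a$ is closed: if $I \in Z_a$ then $a \in I$, so $a \in \ker(Z_a)$; hence any $J$ with $J \supseteq \ker(Z_a)$ satisfies $a \in J$, i.e.\ $J \in Z_a$, which shows $\overline{Z_a} = Z_a$. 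Consequently each $V_a$ is open.

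Now let $C \subseteq \Prim(A)$ be an arbitrary closed set, so $C = \overline{S}$ for some $S$ (e.g.\ $S = C$). By the hull-kernel description, $J \in C$ if and only if $a \in J$ for every $a \in \ker(S)$, that is,
\begin{equation*}
  C \seq \bigcap_{a \in \ker(S)} Z_a .
\end{equation*}
Taking complements, the open set $U \ede \Prim(A) \smallsetminus C$ satisfies $U = \bigcup_{a \in \ker(S)} V_a$. Therefore every open subset of $\Prim(A)$ is a union of sets of the form $V_a$, and since each $V_a$ is itself open, the family $(V_a)_{a \in A}$ is a basis of open sets for the Jacobson topology on $\Prim(A)$.
\end{proof}
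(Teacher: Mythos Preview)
Your proof is correct. It differs from the paper's only in phrasing: the paper uses the bijection between closed two-sided ideals $I$ and open subsets $\Prim(I) = \{J \mid I \not\subset J\}$, and then, given an open $V = \Prim(I)$ and a point $J_0 \in V$, picks any $a \in I \smallsetminus J_0$ to obtain $J_0 \in V_a \subset V$; you instead work with the hull--kernel closure operator and show that every closed set is an intersection of the sets $Z_a$. These are dual formulations of the same elementary unwinding of the Jacobson topology, and both arguments are equally short.
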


\begin{proof}
Following \cite{Dixmier}, we know that the open, non-empty subsets of
$\Prim (A)$ are exactly the sets 
\begin{equation*}
  \{ J \in \Prim (A) \mid I \not\subset J \} \simeq \Prim(I)
\end{equation*}
where $I$ ranges through the closed, non-zero, two-sided ideals of
$A$. If $a \in A$, let us denote by $I_a := \overline{AaA}$ the
closed, two-sided ideal generated by $a$. Then $a \notin J
\Leftrightarrow I_a \not\subset J$, and hence $V_a = \Prim(I_a)$. This shows that $V_a$ is open.

Next, let $V \subset \Prim(A)$ be a non-empty open subset and
  $J_0 \in V$. We know then that there exists a closed, two-sided
  ideal $I$, $0 \neq I \subset A$, such that $V = \Prim(I)$. We
  have $I \not \subset J_0$, and hence we can choose $a \in I
  \smallsetminus J_0$. If $J \subset A$ is a primitive ideal such
that $a \notin J$, then {\em a fortiori} $I \not\subset J$. Therefore
$V_a \subset \Prim(I)$. This shows that $J_0 \in V_a \subset
  V$. Therefore the family $(V_a)_{a \in A}$ is a basis for the
topology on $\Prim (A)$.
\end{proof}

We shall use the bijection of Proposition \ref{prop.prim} to conclude
the following.

\begin{corollary} \label{cor.topology}
A basis for the induced topology on $\Omega_M/\Gamma \simeq \Prim
(A_M^\Gamma)$ is given by the sets
\begin{equation*}
    V_f \ede \{\, \Gamma(\xi,\rho) \in \Omega_M/\Gamma \, \mid\,
    \pi_{\xi,\rho}(f) \neq 0 \, \},
\end{equation*}
where $f$ ranges through the non-zero elements of $A_M^\Gamma$.
\end{corollary}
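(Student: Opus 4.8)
The plan is to transport the basis of open sets from Lemma \ref{lemma.topology} through the bijection of Proposition \ref{prop.prim}. Recall that Proposition \ref{prop.prim}(1) identifies $\Omega_M/\Gamma$ with $\Prim(A_M^\Gamma)$ via $\Gamma(\xi,\rho) \mapsto \ker(\pi_{\xi,\rho})$; since this is a bijection, any basis of $\Prim(A_M^\Gamma)$ pulls back to a basis of $\Omega_M/\Gamma$ for the induced topology. Apply Lemma \ref{lemma.topology} with $A = A_M^\Gamma$: the sets $V_f = \{J \in \Prim(A_M^\Gamma) \mid f \notin J\}$, for $f$ ranging over $A_M^\Gamma$, form such a basis. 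It remains only to translate the condition ``$f \notin J$'' under the identification.

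The key observation is that, for the primitive ideal $J = \ker(\pi_{\xi,\rho})$ corresponding to $\Gamma(\xi,\rho)$, one has $f \notin \ker(\pi_{\xi,\rho})$ if and only if $\pi_{\xi,\rho}(f) \neq 0$. Hence the preimage of $V_f$ under the bijection of Proposition \ref{prop.prim}(1) is exactly the set denoted $V_f$ in the statement of the corollary, namely $\{\Gamma(\xi,\rho) \in \Omega_M/\Gamma \mid \pi_{\xi,\rho}(f) \neq 0\}$. One should note in passing that this set is well defined, i.e.\ independent of the choice of representative $(\xi,\rho)$ in its $\Gamma$-orbit, because $\ker(\pi_{\xi,\rho})$ depends only on the orbit $\Gamma(\xi,\rho)$ by Proposition \ref{prop.prim}; alternatively, the $\Gamma$-equivariance of $f$ gives $\pi_{g\xi, g\cdot\rho}(f)$ conjugate to $\pi_{\xi,\rho}(f)$, so one is nonzero iff the other is. Finally, restricting the index set from all of $A_M^\Gamma$ to its nonzero elements changes nothing, since $V_0 = \emptyset$ is the only set lost and the empty set is automatically in any basis-generated topology (or simply note it is $V_f$ for any $f$ vanishing nowhere's-worth, but in fact $V_0=\emptyset$ is the relevant case and the empty open set needs no basis element).

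There is essentially no obstacle here: the proof is a one-line transport of structure, and the only thing to be careful about is the well-definedness of $V_f$ as a subset of the quotient $\Omega_M/\Gamma$, which follows immediately from Proposition \ref{prop.prim}. I would therefore write the proof as: ``Apply Lemma \ref{lemma.topology} to $A = A_M^\Gamma$ and transport the resulting basis through the bijection of Proposition \ref{prop.prim}(1), noting that $f \notin \ker(\pi_{\xi,\rho})$ is equivalent to $\pi_{\xi,\rho}(f) \neq 0$, and that this condition depends only on the orbit $\Gamma(\xi,\rho)$.''
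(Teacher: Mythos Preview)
Your proof is correct and matches the paper's approach exactly: the paper simply states that the corollary follows by applying the bijection of Proposition \ref{prop.prim} to Lemma \ref{lemma.topology}, without writing out further details. Your additional remarks on well-definedness and the restriction to nonzero $f$ are fine but more than the paper provides.
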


\subsection{The restriction morphisms}
Let $\maO \subset M$ be an open subset. Then $S^*\maO$ is the
restriction of $S^*M$ to $\maO$. We shall need the algebras
\begin{equation}\label{eq.def.ABZ}
  A_\maO \ede \maC_0(S^*\maO; \End(E))\ \ \mbox{ and } \ \ B_\maO \ede
  \overline{\psi^0}(\maO; E)\,.
\end{equation}

Assume that $\maO \subset M$ is $\Gamma$-invariant. The group $\Gamma$
does not act, in general, as multipliers on the $C^*$-algebra $B_\maO
\ede \overline{\psi^0}(\maO; E)$ (it does however act
by conjugation), so the method used in \cite{BCLN1} to compute
$\overline{\psi^{-1}}(\maO;E)^\Gamma \simeq \maK(L^2(\maO;E))^\Gamma$
does not extend to compute $B_\maO^\Gamma$. We shall thus consider the
natural, surjective map
\begin{multline}\label{eq.def.rho}
  \maR_\maO \, : \, A_\maO^\Gamma \ede \maC_0(S^*\maO; \End(E))^\Gamma
  \, \simeq \, B_\maO^\Gamma / \overline{\psi^{-1}}(\maO; E)^\Gamma \\
  \to
  \pi_\alpha(B_\maO^\Gamma)/\pi_\alpha(\overline{\psi^{-1}}(\maO; E)^\Gamma).
\end{multline}
Recall from Corollary \ref{prop.image} that
$\pi_\alpha(\overline{\psi^{-1}}(M;E)^\Gamma) =
\maK(L^2(M;E)_\alpha)^\Gamma$. Therefore, for a given $P \in \clop$,
we have that $\pi_\alpha(P)$ is Fredholm if, and only if, the
principal symbol of $P$ is invertible in $A_M^\Gamma/\ker
(\maR_M)$. This will be discussed in more detail in the next section.

We shall approach the computation of $\ker (\maR_M) \subset
A_M^\Gamma$ by determining the closed subset
\begin{equation}\label{eq.def.Xi}
  \Xi \ede \Prim(A_M^\Gamma/\ker(\maR_M)) \, \subset \,
  \Prim(A_M^\Gamma)
\end{equation}
of the primitive ideal spectrum of $A_M^\Gamma$ corresponding to
$\ker(\maR_M)$. Once we will have determined $\Xi$, we will also have
determined $\ker(\maR_M)$, in view of the definitions recalled in
Subsection \ref{ssec.primitive.spec} that put in bijection the closed,
two-sided ideals of a $C^*$-algebra with the closed subsets of its
primitive ideal spectrum.

Since $\maC(M/\Gamma) \subset B_M$, it follows from the
definition of $\maR_M$ that it is a $\maC(M/\Gamma)$--module morphism,
and hence that $\ker (\maR_M)$ is a $\maC(M/\Gamma)$--module. Let us
also recall that
\begin{equation*}
  \maC(M/\Gamma) \seq \maC(M)^\Gamma \, \subset\, Z_M \ede
  \maC(S^*M)^\Gamma\, \subset\, Z(A_M^\Gamma) \, \subset\, A_M^\Gamma
  \, \subset\, A_M\,.
\end{equation*}
The local nature of $\ker(\maR_M)$ and of the space $\Xi$ is explained in
the following remark.

\begin{remark}\label{rem.cover}
Let $M/\Gamma = \cup V_k$ be an open cover and 
\begin{equation*}
 \ker (\maR_M)_{V_k} :=
\maC_0(V_k) \ker (\maR_M) = \ker(\maR_{V_k}).
\end{equation*}
If we determine each
$\ker (\maR_M)_{V_k}$, then we determine $\ker (\maR_M)$ using a
partition of unity through:
\begin{equation}\label{eq.det.rho}
  \ker(\maR_M) \seq \ {\sum_{k}}' \phi_k \ker(\maR_{V_k})\,,
\end{equation} 
where $\sum'$ refers to sums with only finitely many non-zero terms
and $(\phi_k)$ is a partition of unity of $M/\Gamma$ with continuous
functions subordinated to the covering $(V_k)$ (thus, in particular,
$\supp(\phi_k) \subset V_k$). Since $M$ is compact, we can assume the
covering to be finite. (If $M$ was non-compact, then we would need to
take the closure of the right hand side in Equation
\eqref{eq.det.rho}.) To determine $\maR_M$, we can therefore replace
$M$ by any of the open sets $V_k$ in the covering and study
$\ker(\maR_{V_k})$. We shall do that for the covering of $M/\Gamma$
with the tubes $W_x \simeq \Gamma \times_{\Gamma_x} U_x$ considered in
\ref{ssec.slice}, see Equation \eqref{eq.def.tube}.
\end{remark}

\subsection{Local calculations}
In view of Remark \ref{rem.cover}, we shall concentrate now on the
local structure of $\ker(\maR_M)$, that is, on the structure of
$\ker(\maR_\maO)$ for suitable (``small'') open, $\Gamma$-invariant
subsets $\maO \subset M$. Let us fix then $x \in M$ and let $W_x
\simeq \Gamma \times_{\Gamma_x}U_x$ be the tube around $x$, Equation
\eqref{eq.def.tube}. For simplicity, we shall write
\begin{equation}\label{eq.def.Ax}
  A_x \ede A_{U_x} \ede \maC_0(S^*U_x; \End(E)) \ \mbox{ and } \ Z_x
  \ede Z(A_{x}^{\Gamma_x}) \,.
\end{equation}
For these algebras, the role of $\Gamma$ will be played by
$\Gamma_x$. For the statement of the following lemma, recall the
definitions in Subsection \eqref{ssec.group.actions}, especially
Equation \eqref{eq.def.tube}.

\begin{lemma}\label{lemma.local1}
Let $W_x \simeq \Gamma \times_{\Gamma_x} U_x$. Then $S^*W_x
\simeq\Gamma \times_{\Gamma_x} S^*U_x$ and we have
$\Gamma$-equivariant algebra isomorphisms
\begin{equation*}
  A_{W_x} \hspace*{-0.1cm}\ede \maC_0(S^*W_x; \End(E))\, \simeq \,
  \Ind_{\Gamma_x}^{\Gamma}\big (\maC_0(S^*U_x; \End(E)) \big )\,
  =: \hspace*{-0.1cm}\, \Ind_{\Gamma_x}^{\Gamma}(A_x)\,.
\end{equation*}
Consequently, the Frobenius isomorphism $\Phi$ of Equation
\eqref{eq.Frobenius2} induces an isomorphism
\begin{equation*}
  \Phi^{-1} \, : \, A_{W_x}^{\Gamma} \, \to \, A_x^{\Gamma_x} \,.
\end{equation*}
\end{lemma}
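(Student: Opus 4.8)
The plan is to establish the two isomorphisms in Lemma~\ref{lemma.local1} separately, the first being geometric-functorial and the second a direct application of Frobenius reciprocity already developed in the preliminaries.

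\textbf{Step 1: the geometric identification of $S^*W_x$.} First I would observe that the isometric diffeomorphism $W_x \simeq \Gamma \times_{\Gamma_x} U_x$ of Equation~\eqref{eq.def.tube} is $\Gamma$-equivariant, hence it induces a $\Gamma$-equivariant diffeomorphism on cotangent bundles, and therefore on unit cosphere bundles: $S^*W_x \simeq S^*(\Gamma \times_{\Gamma_x} U_x) \simeq \Gamma \times_{\Gamma_x} S^*U_x$. The point is that $T^*$ (and the fiberwise unit-sphere construction, which uses the invariant metric) commutes with the associated-bundle construction $\Gamma \times_{\Gamma_x} (-)$ because the $\Gamma$-action on $W_x$ is by isometries. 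This is routine and I would state it without belaboring the chart computations.

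\textbf{Step 2: identifying the section algebra with an induced algebra.} Next I would note that under the identification of Step~1, the lift of $\End(E)$ to $S^*W_x$ is, via Equation~\eqref{eq.trivial.E}, the associated bundle $\Gamma \times_{\Gamma_x}(S^*U_x \times \End(\beta))$, where $\beta$ is the $\Gamma_x$-module giving $E$ near $x$. Then the isomorphism $\maC_0(S^*W_x; \End(E)) \simeq \Ind_{\Gamma_x}^\Gamma(\maC_0(S^*U_x;\End(E)))$ is exactly the section-algebra version of the module identification in Equation~\eqref{eq.induced} (the $\maC_0$-line there), now applied to the base $S^*U_x$ in place of $U_x$ and with coefficient bundle carrying the $\End(\beta)$ fiber. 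Concretely, a $\Gamma$-section of the lift of $\End(E)$ over $S^*W_x$ is the same as a function $f:\Gamma \to \maC_0(S^*U_x;\End(E))$ with $f(gh^{-1}) = h\cdot f(g)$ for $h \in \Gamma_x$, which is the function-model of $\Ind_{\Gamma_x}^\Gamma(A_x)$ from Equation~\eqref{eq.def.induced}; this is an algebra isomorphism for the pointwise product by Remark~\ref{rem.CD}(1), and it is $\Gamma$-equivariant by construction.

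\textbf{Step 3: taking $\Gamma$-invariants via Frobenius.} Finally, applying the $\Gamma$-invariants functor to the $\Gamma$-equivariant algebra isomorphism $A_{W_x} \simeq \Ind_{\Gamma_x}^\Gamma(A_x)$ from Step~2, and then invoking the Frobenius isomorphism $\Phi$ of Equation~\eqref{eq.Frobenius2} applied to the $\Gamma_x$-algebra $V = A_x$, one obtains
\begin{equation*}
  A_{W_x}^\Gamma \,\simeq\, \Ind_{\Gamma_x}^\Gamma(A_x)^\Gamma \,\xleftarrow[\ \Phi\ ]{\ \sim\ }\, A_x^{\Gamma_x}\,,
\end{equation*}
which is an isomorphism of algebras because $\Phi$ is an algebra isomorphism whenever $V$ is an algebra. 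Composing gives the desired $\Phi^{-1} : A_{W_x}^\Gamma \to A_x^{\Gamma_x}$. I expect the main obstacle to be purely bookkeeping rather than conceptual: one must be careful that the three identifications (equivariant diffeomorphism of cosphere bundles, local triviality of $\End(E)$ over the slice, and the function-model of induction) are mutually compatible so that the composite really is the inverse of $\Phi$ applied with $\Gamma$ replaced by its action on $A_x = \maC_0(S^*U_x;\End(E))$; since $\Gamma_x$ can act nontrivially on both $U_x$ and $\beta$, the coefficient bundle $\End(E)\vert_{S^*U_x}$ is genuinely a $\Gamma_x$-algebra and not just $\maC_0(S^*U_x)\otimes\End(\beta)$ with trivial action, so the induced-algebra identification must be stated for the twisted object. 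None of this requires real work beyond citing Equations~\eqref{eq.def.tube}, \eqref{eq.trivial.E}, \eqref{eq.induced}, \eqref{eq.Frobenius2} and Remark~\ref{rem.CD}.
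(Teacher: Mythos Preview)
Your proposal is correct and follows essentially the same approach as the paper's proof: both use the associated-bundle description $\End(E)\vert_{W_x} \simeq \Gamma \times_{\Gamma_x}(\End(E)\vert_{U_x})$, invoke Equation~\eqref{eq.induced} (applied over the cosphere bundle) to get the induced-algebra identification $A_{W_x} \simeq \Ind_{\Gamma_x}^\Gamma(A_x)$, and then apply the Frobenius isomorphism of Equation~\eqref{eq.Frobenius2} to pass to invariants. Your write-up is more explicit about the compatibility checks and the algebra structure (via Remark~\ref{rem.CD}), but the argument is the same.
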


\begin{proof} We have that $E \vert_{W_x} \simeq \Gamma \times
_{\Gamma_x}(E\vert_{U_x})$, hence $\End(E) \vert_{W_x} \simeq
  \Gamma \times _{\Gamma_x}(\End(E)\vert_{U_x})$. Equation
  \eqref{eq.induced} then gives that $\maC_0(W_x, \End(E)) \simeq
  \Ind_{\Gamma_x}^\Gamma(\maC_0(U_x, \End(E)))$. The rest follows
  right away from the Frobenius reciprocity (more precisely, from
  Equation \eqref{eq.Frobenius2}) and from Equation
  \eqref{eq.induced}, with $\beta$ replaced with $\End(E)$.
\end{proof}

\begin{remark}
In view of Equation \eqref{eq.Frobenius2}, the isomorphism $\Phi$ of
Lemma \ref{lemma.local1} can be written explicitly as follows.  Let $f
\in A_x^{\Gamma_x}.$ Then, for any equivalence class $[\gamma, \xi]
:= \Gamma_x(\gamma, \xi) \in\Gamma\times_{\Gamma_x}
  S^*U_x  \simeq S^*W_x $ we have
\begin{equation*}
    \Phi(f)([\gamma, \xi]) \seq [\gamma, f(\xi)],
\end{equation*}
where $[\gamma , f(\xi)] \in \Gamma \times_{\Gamma_x} (U_x
  \times \End(E_x))^{\Gamma_x} \simeq \Gamma
  \times_{\Gamma_x} \End(E\vert_{U_x})^{\Gamma_x}
\simeq \End(E|_{W_x})^\Gamma$.
 This defines $\Phi(f) \in
\maC_0(S^*W_x; \End(E|_{W_x}))^\Gamma = A_{W_x}^\Gamma$.
\end{remark}

Lemma \ref{lemma.local1} together with the following remark will allow
us to reduce the study of the algebra $A_M^\Gamma$ to that of its
analogues defined for slices.

\begin{remark}\label{rem.local}
Let $U$ be an open set of some euclidean space and $W = U \times \{1,
2, \ldots, N\}$, where the space on the second factor is endowed with
the discrete topology. For simplicity, we identify $L^2(W)$ with
$L^2(U)^N$ using the map $f \mapsto (f(i))_{i=1\cdots N}$.
 Then
\begin{equation}
  \begin{gathered}
     \psi^{-1}(W) = M_N(\psi^{-1}(U))\simeq \psi^{-1}(U) \otimes
     M_N(\CC) \mbox{ and hence}\\
    \overline{\psi^{-1}}(W)=M_N(\overline{\psi^{-1}}(U)) \simeq
    \overline{\psi^{-1}}(U) \otimes M_N(\CC) \, .
  \end{gathered}
\end{equation}
On the other hand, if $A^N$ denotes the direct sum of $N$-copies of
the algebra $A$, then we have the following inclusions of algebras
\begin{equation}
  \begin{gathered}
     \psi^{0}(U)^N \subset \psi^{0}(W) \subset M_N(\psi^{0}(U)) \simeq
     \psi^{0}(U)\otimes M_N(\CC), \mbox{ and hence}\\
    \overline{\psi^{0}}(U)^N \subset \overline{\psi^{0}}(W) \subset
    M_N(\psi^{0}(U)) \simeq \overline{\psi^{0}}(U) \otimes M_N(\CC) \,
    .
  \end{gathered}
  \end{equation}
\end{remark}

The following lemma makes explicit the group actions in the
isomorphisms of the last remark. Thus, in analogy with the definitions
of the algebras $A_{W_x} = \maC_0(S^*W_x; \End(E))$ and $A_x =
\maC_0(S^*U_x; \End(E))$, we consider the algebras
\begin{equation}\label{eq.def.Bx}
  B_{W_x} \ede \overline{\psi^0}(W_x; E) \ \ \mbox{ and } \ \ B_x
  \ede \overline{\psi^0}(U_x; E) \,.
\end{equation}
We shall also use the standard notation $V^{(I)} := \{ f : I \to V \}$
for $I$ finite, as before.

\begin{lemma}\label{lemma.local2}
We keep the notation of Lemma \ref{lemma.local1} and of Equation
\eqref{eq.def.Bx} above. Then we have $\Gamma$-equivariant algebra
isomorphisms
\begin{equation*}
  B_{W_x} \, \simeq \, \Ind_{\Gamma_x}^{\Gamma}(B_x) +
  \overline{\psi^{-1}}(W_x; E) \,.
\end{equation*}
Consequently, $B_{W_x}^{\Gamma} \simeq \Phi(B_x^{\Gamma_x}) +
\overline{\psi^{-1}}(W_x; E)^{\Gamma}.$
\end{lemma}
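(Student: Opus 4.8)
The plan is to combine the tube decomposition $W_x \simeq \Gamma \times_{\Gamma_x} U_x$ with Remark \ref{rem.local}, which describes $\psi^0$ and $\psi^{-1}$ on a disjoint union $W = U \times \{1,\ldots,N\}$ in terms of matrix algebras over $\psi^0(U)$ and $\psi^{-1}(U)$. First I would set $N := [\Gamma : \Gamma_x]$, fix a set of coset representatives $g_1, \ldots, g_N$ for $\Gamma/\Gamma_x$, and use these to identify $W_x$ (as a manifold, forgetting the $\Gamma$-action) with the disjoint union of $N$ translated copies of the slice $U_x$, with $E\vert_{W_x}$ correspondingly identified with $(E\vert_{U_x})^{(N)}$ via Equation \eqref{eq.induced}. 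Under this identification $L^2(W_x; E\vert_{W_x}) \simeq L^2(U_x; E\vert_{U_x})^{(N)}$, and Remark \ref{rem.local} (applied with the bundle $E\vert_{U_x}$ in place of the trivial bundle, which is harmless since the statement is about operator algebras on sections) gives the chain of inclusions
\begin{equation*}
  \psi^0(U_x; E)^{(N)} \subset \psi^0(W_x; E) \subset M_N\big(\psi^0(U_x; E)\big), \qquad
  \overline{\psi^{-1}}(W_x; E) \simeq \overline{\psi^{-1}}(U_x; E) \otimes M_N(\CC).
\end{equation*}
Taking norm closures, one sees that $B_{W_x} = \overline{\psi^0}(W_x;E)$ sits between $B_x^{(N)}$ and $M_N(B_x)$, and since $\overline{\psi^{-1}}(W_x;E)$ already is the full $M_N(\overline{\psi^{-1}}(U_x;E))$, the off-diagonal blocks of $M_N(B_x)$ that are missing from $\overline{\psi^0}(W_x;E)$ are precisely off-diagonal blocks of negative-order operators (the distribution kernel of an operator sending sections supported near $g_i U_x$ to sections supported near $g_j U_x$, $i \neq j$, has support off the diagonal, hence the operator is smoothing). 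This yields $B_{W_x} = B_x^{(N)} + \overline{\psi^{-1}}(W_x;E)$ as a sum of subalgebras of $M_N(B_x) + \overline{\psi^{-1}}(W_x;E)$. Re-introducing the $\Gamma$-action, the diagonal piece $B_x^{(N)} \simeq \maC(\Gamma/\Gamma_x) \otimes B_x$ is exactly $\Ind_{\Gamma_x}^\Gamma(B_x)$ by the function-model of the induced representation (Equation \eqref{eq.def.induced} and Remark \ref{rem.CD}(2)), the $\Gamma$-action permuting the $N$ copies and acting by conjugation within the $\Gamma_x$-isotropy; this gives the displayed $\Gamma$-equivariant isomorphism $B_{W_x} \simeq \Ind_{\Gamma_x}^\Gamma(B_x) + \overline{\psi^{-1}}(W_x;E)$.

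For the second, ``consequently'' assertion, I would apply $\pi$-invariance. Taking $\Gamma$-invariants of $B_{W_x} \simeq \Ind_{\Gamma_x}^\Gamma(B_x) + \overline{\psi^{-1}}(W_x;E)$ and using that for a sum of $\Gamma$-invariant subspaces $(X+Y)^\Gamma = X^\Gamma + Y^\Gamma$ when one has a $\Gamma$-equivariant bounded projection onto one summand (here the averaging over the permutation part, or more simply the fact that $\overline{\psi^{-1}}(W_x;E)$ is an ideal with $\Gamma$-equivariant complemented quotient), one gets $B_{W_x}^\Gamma \simeq \Ind_{\Gamma_x}^\Gamma(B_x)^\Gamma + \overline{\psi^{-1}}(W_x;E)^\Gamma$. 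Finally, the Frobenius isomorphism of Equation \eqref{eq.Frobenius2}, in the form already used in Lemma \ref{lemma.local1}, identifies $\Ind_{\Gamma_x}^\Gamma(B_x)^\Gamma$ with $\Phi(B_x^{\Gamma_x})$ (the algebra isomorphism statement in Remark \ref{rem.new} applies since $B_x$ is a $\Gamma_x$-algebra), giving $B_{W_x}^\Gamma \simeq \Phi(B_x^{\Gamma_x}) + \overline{\psi^{-1}}(W_x;E)^\Gamma$ as claimed.

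The main obstacle I anticipate is the careful bookkeeping of the $\Gamma$-action under the identification $W_x \simeq \bigsqcup_i g_i U_x$: the action is \emph{not} simply a permutation of the $N$ copies, because elements of $\Gamma_x$ act nontrivially on each slice $U_x$ and on the bundle fibers, and a general $g \in \Gamma$ both permutes cosets and contributes a $\Gamma_x$-twist. Getting this compatible with the algebra structure in Remark \ref{rem.local} — so that $\Ind_{\Gamma_x}^\Gamma(B_x)$ (with its pointwise product and $\Gamma$-action from left multiplication on $\CC[\Gamma]$) really is the diagonal subalgebra $B_x^{(N)}$ — requires matching conventions between the function-model of induction and the explicit coset-representative trivialization; this is exactly the point where the formula $\Phi(f)([\gamma,\xi]) = [\gamma, f(\xi)]$ from the remark after Lemma \ref{lemma.local1} is the right bridge. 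A secondary technical point is justifying that off-diagonal blocks of $\overline{\psi^0}(W_x;E)$ lie in $\overline{\psi^{-1}}$: this is the standard pseudolocality/support argument, and I would simply cite the local structure theory of pseudodifferential operators (as in the references in Subsection \ref{sub.psdo}) rather than reprove it.
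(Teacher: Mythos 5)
Your proof is correct, and the overall architecture --- identify $W_x$ with the disjoint union $\bigsqcup_i g_i U_x$, show $B_{W_x}$ is ``block-diagonal modulo compacts,'' identify the diagonal with $\Ind_{\Gamma_x}^\Gamma(B_x)$ via the function model, then take invariants using exactness of $V \mapsto V^\Gamma$ for finite $\Gamma$ and Frobenius --- matches the paper's. The one genuine difference lies in \emph{how} you establish the key identity $B_{W_x} = B_x^{(\Gamma/\Gamma_x)} + \overline{\psi^{-1}}(W_x;E)$. You argue via pseudolocality: the off-diagonal blocks $p_i P p_j$ ($i\neq j$) of any $P$ in $\overline{\psi^0}(W_x;E)$ are compact because the distribution kernel lives off the diagonal, and compactness passes to the norm closure. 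The paper instead uses the symbol exact sequence of Corollary \ref{cor.full.symbol}: the symbol of $P \in B_{W_x}$ lives in $A_{W_x} \simeq A_x^{(\Gamma/\Gamma_x)}$ by Lemma \ref{lemma.local1}, and since $B_x \to A_x$ is surjective one can lift it coordinatewise to an element of $B_x^{(\Gamma/\Gamma_x)}$, leaving a difference with vanishing symbol, i.e.\ in $\overline{\psi^{-1}}(W_x;E)$. Both routes are sound; the paper's symbol-lifting argument is slightly cleaner because it does not require the separate observation that off-diagonal blocks are smoothing and that this survives closure, while your pseudolocality route is more hands-on and makes the block-matrix picture from Remark \ref{rem.local} fully explicit. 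Your phrasing of the invariants step (``bounded $\Gamma$-equivariant projection'') is heavier than needed: since $\Gamma$ is finite, the Reynolds average $z \mapsto \frac{1}{|\Gamma|}\sum_\gamma \gamma z$ immediately gives $(X+Y)^\Gamma = X^\Gamma + Y^\Gamma$ for $\Gamma$-invariant subspaces, which is precisely the ``exactness of $V\mapsto V^\Gamma$'' invoked in the paper.
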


\begin{proof}
Since $B_y = B_{U_y} \subset B_{W_x}$ for all $y \in \Gamma x$ and
since $U_x$ and $U_y$ are diffeomorphic through any $\gamma \in
\Gamma$ such that $\gamma x = y$ we obtain the inclusion
$B_x^{(\Gamma/\Gamma_x)} \subset B_{W_x}$, as in Remark
\ref{rem.local}. Similarly, since $B_x \to A_x$ is surjective, we
obtain the equality $B_{W_x} = B_x^{(\Gamma/\Gamma_x)} +
\overline{\psi^{-1}}(W_x; E)$ as in the same remark. From Equation
\eqref{eq.psiso} and Lemma \ref{lemma.local1} we know that
$B_{W_x}/\overline{\psi^{-1}}(W_x; E) \simeq A_{W_x} \simeq
A_{U_x}^{(\Gamma/\Gamma_x)} = \Ind_{\Gamma_x}^{\Gamma} (A_x)$, and
hence we obtain $B_{W_x} \simeq \Ind_{\Gamma_x}^{\Gamma}(B_x) +
\overline{\psi^{-1}}(W_x; E) \,.$ The last isomorphism follows from
the Frobenius reciprocity (more precisely, from Equation
\eqref{eq.induced}, with $\beta$ replaced with $B_x$) and from the
exactness of the functor $V \to V^{\Gamma}$.
\end{proof}

To be able to make further progress, it will be convenient to look
first at the case when $x \in M$ has minimal isotropy $\Gamma_x \sim
\Gamma_0$, that is, when $x$ belongs to the principal orbit bundle
$M_0 := M_{(\Gamma_0)}$. The notation $\Gamma_0$ will remain fixed
from now on.

\subsection{Calculations for the principal orbit bundle}
We assume as before that $M/\Gamma$ is connected.  Let $\Gamma_0$ be a
minimal isotropy group (which, we recall, is unique up to
conjugation). Let $x \in M$ be our fixed point and $\Gamma_x$ its
isotropy, as before. The case when $\Gamma_x$ is conjugated to
$\Gamma_0$ is simpler since, as noticed already, then $\Gamma_x$ acts
trivially on $U_x$.

Let us fix $x \in M$ with isotropy group $\Gamma_x = \Gamma_0$. As
before, we let
\begin{equation*}
   W_x \simeq \Gamma \times_{\Gamma_0} U_x \mbox{ and } E_{|W_x}\simeq 
   \Gamma\times_{\Gamma_0} (U_x \times \beta) \,,
\end{equation*}
where $\beta$ is some $\Gamma_0$-module, as in Equations
\eqref{eq.def.tube} and \eqref{eq.trivial.E}.  We decompose $\beta$
into a direct sum of representations of the form $\beta_j^{k_j}$ for
some non-isomorphic irreducible module (or representation) $\beta_j$
of $\Gamma_0$, again as before:
\begin{equation*}
    E_x \seq \beta \simeq \oplus \beta_j^{k_j}\,.
\end{equation*}

\begin{remark} \label{rem.principal.bundle.S^*M}
We have noticed earlier that $\Gamma_0$ acts trivially on $U_x$, hence
on $T^*_xM$. In particular $S^*M$ also has $\Gamma_0$ as minimal
isotropy subgroup, and $S^*M_{0}$ is a dense subset of the principal
bundle of $S^*M$.

\end{remark}

\begin{corollary}\label{cor.str2}
Let $x \in M$ be such that $\Gamma_x = \Gamma_0$ and $\beta =
\oplus_{j=1}^N \beta_j^{k_j}$, for some non-isomorphic, irreducible
$\Gamma_0$-modules $\beta_j$. Then
\begin{equation*}
   A_{W_x}^{\Gamma} \, \simeq \, A_{x}^{\Gamma_x}  \, \simeq \, \maC_0(S^*U_x)
   \otimes \End_{\Gamma_0} (\beta) \, \simeq \, \oplus_{j=1}^N M_{k_j}
\big (\maC_0(S^*U_x) \big ).
\end{equation*}
In particular, the canonical central character map
\begin{equation*}
 \Prim(A_x^{\Gamma_0}) \to S^*U_x \simeq
\Prim(\maC_0(S^*U_x)^{\Gamma_0})
\end{equation*}
of Proposition \ref{prop.prim}
corresponds to the trivial finite covering $S^*U_x \times
\Prim(\End_{\Gamma_0} (\beta)) \to S^*U_x$.
\end{corollary}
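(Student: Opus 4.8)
The plan is to deduce this corollary directly from the local structural results already established, chiefly Lemma~\ref{lemma.local1} and the representation theory recalled in Remark~\ref{rem.new}. The first step is the observation that, since $x$ lies in the principal orbit bundle $M_0$, the isotropy group $\Gamma_x = \Gamma_0$ acts \emph{trivially} on the slice $U_x$, hence on $T^*_x M$ and on $S^*U_x$ (see Subsection~\ref{ssec.principal} and Remark~\ref{rem.principal.bundle.S^*M}). Consequently, the $\Gamma_0$-action on the algebra $A_x = \maC_0(S^*U_x; \End(E))$ only affects the bundle coefficients: under the trivialization $E\vert_{U_x} \simeq U_x \times \beta$ of Equation~\eqref{eq.trivial.E}, $\Gamma_0$ acts through its action on the fiber $\beta = \oplus_{j=1}^N \beta_j^{k_j}$ alone. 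Taking invariants therefore gives
\begin{equation*}
  A_x^{\Gamma_0} \seq \maC_0(S^*U_x; \End(\beta))^{\Gamma_0} \seq
  \maC_0(S^*U_x) \otimes \End(\beta)^{\Gamma_0}
  \seq \maC_0(S^*U_x) \otimes \End_{\Gamma_0}(\beta)\,.
\end{equation*}

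Next I would invoke Remark~\ref{rem.new}: since $\beta = \oplus_{j=1}^N \beta_j^{k_j}$ with the $\beta_j$ non-isomorphic and irreducible, Schur's lemma yields $\End_{\Gamma_0}(\beta) \simeq \oplus_{j=1}^N \End_{\Gamma_0}(\beta_j^{k_j}) \simeq \oplus_{j=1}^N M_{k_j}(\CC)$. Combining this with the displayed identity gives
\begin{equation*}
  A_x^{\Gamma_0} \simeq \maC_0(S^*U_x) \otimes \Big( \bigoplus_{j=1}^N M_{k_j}(\CC) \Big)
  \simeq \bigoplus_{j=1}^N M_{k_j}\big(\maC_0(S^*U_x)\big)\,,
\end{equation*}
which is the asserted form. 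The isomorphism $A_{W_x}^\Gamma \simeq A_x^{\Gamma_x}$ is then nothing but the Frobenius isomorphism $\Phi^{-1}$ of Lemma~\ref{lemma.local1}, which applies here with $\Gamma_x = \Gamma_0$.

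Finally, for the statement about the central character map, I would use the explicit description of $\Prim$ for algebras of the form $\maC_0(X) \otimes \End_H(\beta) \simeq \maC_0(X; \End_H(\beta))$ given in Lemma~\ref{lemma.product} (with $X = S^*U_x$ and $H = \Gamma_0$): its primitive ideal spectrum is $X \times \{1, \dots, N\}$ and the central character map induced by the inclusion of the unit $\maC_0(X) \hookrightarrow \maC_0(X; \End_H(\beta))$ is exactly the first projection, i.e.\ the trivial finite covering $S^*U_x \times \{1, \dots, N\} \to S^*U_x$. One checks that under the identification $A_x^{\Gamma_0} \simeq \maC_0(S^*U_x; \End_{\Gamma_0}(\beta))$ just constructed, the central character map of Proposition~\ref{prop.prim} (induced by $\maC_0(S^*U_x)^{\Gamma_0} \subset A_x^{\Gamma_0}$, which corresponds to the central subalgebra $\maC_0(S^*U_x)\cdot 1$) coincides with the map $j^*$ of Lemma~\ref{lemma.product}; identifying $\{1,\dots,N\}$ with $\Prim(\End_{\Gamma_0}(\beta))$ via Example~\ref{ex.group} completes the argument.

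There is no serious obstacle here; the corollary is a direct assembly of Lemma~\ref{lemma.local1}, Remark~\ref{rem.new}, and Lemma~\ref{lemma.product}. The only point requiring a word of care is the very first step --- that triviality of the $\Gamma_0$-action on $U_x$ lets one pull the $\maC_0(S^*U_x)$ factor out of the invariants --- but this is immediate once one recalls (Subsection~\ref{ssec.principal}) that minimality of $\Gamma_0$ forces $\Gamma_0$ to act trivially on the slice.
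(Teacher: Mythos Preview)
Your argument is correct and follows essentially the same route as the paper: invoke Lemma~\ref{lemma.local1} for the first isomorphism, use the triviality of the $\Gamma_0$-action on $U_x$ to factor out $\maC_0(S^*U_x)$ and reduce to $\End_{\Gamma_0}(\beta)$, decompose the latter via Remark~\ref{rem.new}/Example~\ref{ex.group}, and finish with Lemma~\ref{lemma.product} for the central character map. The only cosmetic difference is that the paper cites Example~\ref{ex.group} where you cite Remark~\ref{rem.new}, but these record the same fact.
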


\begin{proof}
The first isomorphism is repeated from Lemma \ref{lemma.local1}.  The
second one is obtained from the following:
\begin{enumerate}[(i)]
\item from the definition of $A_x = A_{U_x}$,
\item from the assumption that $\Gamma_x = \Gamma_0$,
\item from the fact that $\Gamma_0$ acts trivially on $U_x$, and
\item from the identifications
\begin{equation*}
  A_x^{\Gamma_0} \ede \maC_0(S^*U_x; \End(E))^{\Gamma_0} \simeq
  \maC_0(S^*U_x) \otimes \End(\beta)^{\Gamma_0}.
\end{equation*}
\end{enumerate}

The last isomorphism follows from Example \ref{ex.group} and the
isomorphism $M_n(\CC) \otimes A \simeq M_n(A)$, valid for any algebra
$A$. The rest follows from Lemma \ref{lemma.product}.

Indeed, since both $\maC_0(S^*U_x)$ and $\End(\beta)^{\Gamma_0}$ have
only finite dimensional irreducible representations, we obtain
$\Prim(A_x^{\Gamma_0}) = S^*U_x \times \Prim(\End_{\Gamma_0}(\beta))
\simeq S^*U_x \times \{1, 2, \ldots, N\}$, where we use the
identification $\Prim(\maC_0(S^*U_x)) \simeq S^*U_x$ and where the set
$\{1, 2, \ldots, N\}$ is in natural bijection with the primitive ideal
spectrum of the algebra $\End_{\Gamma_0}(\beta) \simeq \oplus_{j=1}^N
M_{k_j}(\CC)$. The inclusion $\maC_0(S^*U_x) =
\maC_0(S^*U_x)^{\Gamma_0} \to A_x^{\Gamma_0}$ is given by the unital
inclusion $\CC \to \oplus_{j=1}^N M_{k_j}(\CC)$.  Hence the map
$\Prim(A_x^{\Gamma_0}) \to S^*U_x$ identifies with the first
projection in $S^*U_x \times \{1, 2, \ldots, N\} \to S^*U_x$.  That
is, it is a trivial covering, as claimed.
\end{proof}

The fibers of $\Prim(A_{M_0}^{\Gamma}) \to M_{0}/\Gamma$ are thus the
simple factors of $\End(E_x)^{\Gamma_0}$, whose structure was
determined in Example \ref{ex.group}.  We shall need the following
remark similar to Remark \ref{rem.local}, but simpler.

\begin{remark}\label{rem.local.bundle}
Let $U$ be an open subset of a euclidean space, let $V$ be a finite
dimensional vector space and let $V$ denote, by abuse of notation,
also the trivial, vector bundle with fiber $V$. Then we have {\em
  natural} isomorphisms
\begin{equation*}
  \begin{gathered}
    \psi^{-1}(U; V) \, \simeq\, \psi^{-1}(U) \otimes \End(V) \ \mbox{
      and}\\
    \psi^0(U; V) \, \simeq\, \psi^0(U) \otimes \End(V) \, .
  \end{gathered}
\end{equation*}
Consequently, we also have the analogous isomorphisms for the
completions
\begin{equation*}
  \begin{gathered}
    \overline{\psi^{-1}}(U; V) \, \simeq\, \overline{\psi^{-1}}(U)
    \otimes \End(V) \ \mbox{ and}\\
    \overline{\psi^0}(U; V) \, \simeq\, \overline{\psi^0}(U)
    \otimes \End(V) \, .
  \end{gathered}
\end{equation*}
\end{remark}

We are in position now to determine the kernel of $\maR_{W_x}$, when
$x$ is in the principal orbit bundle. We will use the notation of
Subsection \ref{ssec.group.actions} that was recalled at the beginning
of this subsection as well as the notation of Subsection
\ref{ssec.Frobenius}. In particular, recall that $\beta_j \in
\widehat{\Gamma}_0$ and $\alpha \in \widehat{\Gamma}$ are said to be
$\Gamma_0$-disjoint if $\beta_j$ is {\em not} contained in the
restriction of $\alpha$ to $\Gamma_0$. Also, $\Phi$ is the Frobenius
isomorphism, Equations \eqref{eq.Frobenius} and \eqref{eq.Frobenius2}
and Corollary \ref{cor.str2}.

\begin{proposition}\label{prop.str3}
Let $\Gamma_x = \Gamma_0$, let $E_x = \beta = \oplus_{j=1}^N
\beta_j^{k_j}$, and $\Phi : \maC_0(S^*U_x)
\otimes \End_{\Gamma_0}(\beta) \simeq A_x^{\Gamma_0} \to
A_{W_x}^{\Gamma}$ be the Frobenius isomorphism of Corollary
\ref{cor.str2}. Then
\begin{enumerate}
  \item $\maC_0(S^*U_x) \otimes \End_{\Gamma_0}(\beta_j^{k_j}) \subset
\Phi^{-1}(\ker(\maR_{W_x}))$ if $\beta_j$ and $\alpha$ are
$\Gamma_0$-disjoint, and
  \item $\maC_0(S^*U_x) \otimes \End_{\Gamma_0}(\beta_j^{k_j}) \cap
\Phi^{-1}(\ker(\maR_{W_x})) = 0$ if $\beta_j$ and $\alpha$ are
$\Gamma_0$-associated.
\end{enumerate}
In particular, Also, let $J \subset \{1, 2, \ldots, N\}$ be the set of
indices $j$ such that $\beta_j$ and $\alpha$ are $\Gamma_0$-disjoint,
then
\begin{equation*}
  \begin{gathered}
     \ker(\maR_{W_x}) \seq \Phi \big ( \oplus_{j \in J} \maC_0(S^*U_x)
     \otimes \End_{\Gamma_0}(\beta_j^{k_j}) \big) \ \mbox{ and}\\
  \pi_\alpha(B_M^\Gamma)/\pi_\alpha(\clopn^\Gamma) \, \simeq \, \Phi
  \big ( \oplus_{j \notin J} \maC_0(S^*U_x)
  \otimes \End_{\Gamma_0}(\beta_j^{k_j}) \big) \,.
  \end{gathered}
\end{equation*}
\end{proposition}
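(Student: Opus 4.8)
The plan is to reduce the statement to the local model provided by Lemma \ref{lemma.local2} and Corollary \ref{cor.str2}, and then to invoke Proposition \ref{prop.res.alpha} with $H = \Gamma_0$, $\Gamma$ playing its own role. First I would recall from Lemma \ref{lemma.local2} that $B_{W_x}^\Gamma \simeq \Phi(B_x^{\Gamma_0}) + \overline{\psi^{-1}}(W_x;E)^\Gamma$, so that after quotienting by negative-order operators we obtain exactly $B_{W_x}^\Gamma/\overline{\psi^{-1}}(W_x;E)^\Gamma \simeq A_{W_x}^\Gamma \simeq A_x^{\Gamma_0}$, where the last isomorphism is the Frobenius isomorphism $\Phi$ of Corollary \ref{cor.str2}. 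Since $x$ lies in the principal orbit bundle, $\Gamma_0$ acts trivially on $U_x$, and Corollary \ref{cor.str2} gives $A_x^{\Gamma_0} \simeq \maC_0(S^*U_x)\otimes \End_{\Gamma_0}(\beta) \simeq \oplus_{j=1}^N M_{k_j}(\maC_0(S^*U_x))$. Thus the map $\maR_{W_x}$ becomes, after these identifications, a $\maC_0(S^*U_x/\Gamma_0) = \maC_0(S^*U_x)$-linear map out of $\maC_0(S^*U_x)\otimes\End_{\Gamma_0}(\beta)$.

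Next I would make the identification of $\maR_{W_x}$ explicit on each tensor factor. Using Remark \ref{rem.local.bundle} (and Remark \ref{rem.local} to track group actions), the decomposition $\beta = \oplus_j \beta_j^{k_j}$ decomposes $A_x^{\Gamma_0}$ and correspondingly $\Ind_{\Gamma_0}^\Gamma(\beta)$ as a direct sum over $j$; by Lemma \ref{lemma.componentwise}, the action of $\Phi(T)$ on $\Ind_{\Gamma_0}^\Gamma(\beta)$ is componentwise in $j$, so $\pi_\alpha$ of $B_{W_x}^\Gamma/\overline{\psi^{-1}}(W_x;E)^\Gamma$ acting on $p_\alpha\Ind_{\Gamma_0}^\Gamma(L^2(U_x;\beta))$ splits along $j$. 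The key point is then purely representation-theoretic: $\maR_{W_x}$ kills the $j$-th summand precisely when $p_\alpha$ annihilates the corresponding summand of the induced $\Gamma$-module, which by Frobenius reciprocity (Equation \eqref{eq.Frobenius2} and the equivalences displayed in the proof of Proposition \ref{prop.res.alpha}) happens iff $\Hom_{\Gamma_0}(\alpha,\beta_j) = 0$, i.e.\ iff $\beta_j$ and $\alpha$ are $\Gamma_0$-disjoint; and when they are $\Gamma_0$-associated, the restriction of $\maR_{W_x}$ to that simple summand is injective because a simple algebra acting non-degenerately on a non-zero space has no proper ideals. This is exactly the content of Proposition \ref{prop.res.alpha} with $H = \Gamma_0$ and base space $S^*U_x$ adjoined via the tensor factor $\maC_0(S^*U_x)$; I would state that we are applying that proposition ``with coefficients in $\maC_0(S^*U_x)$,'' which is legitimate since the proof there only used the simplicity of $\End(\beta)^H$ and non-degeneracy of the action, both of which are preserved under tensoring with the commutative algebra $\maC_0(S^*U_x)$.

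Assembling these, items (1) and (2) of the proposition follow directly: for $j \in J$ the summand $\maC_0(S^*U_x)\otimes\End_{\Gamma_0}(\beta_j^{k_j})$ lies in $\Phi^{-1}(\ker\maR_{W_x})$, and for $j\notin J$ it meets $\Phi^{-1}(\ker\maR_{W_x})$ only in $0$. Because $\maR_{W_x}$ is $\maC_0(S^*U_x)$-linear and the summands are the isotypical/simple blocks, $\ker\maR_{W_x}$ is exactly the direct sum of the blocks over $j\in J$, giving the first displayed formula; and the image, which is $\pi_\alpha(B_{W_x}^\Gamma)/\pi_\alpha(\overline{\psi^{-1}}(W_x;E)^\Gamma)$, is the complementary direct sum over $j\notin J$, giving the second. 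To get the global statement for $M$ in place of $W_x$ (as written, the proposition conflates the two when $M_0$ is dense), I would note that the formula is stated for the tube $W_x$ and that the passage to all of $M$ is handled subsequently via Remark \ref{rem.cover}; here it suffices to record the local answer.

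The main obstacle I anticipate is justifying the ``coefficients in $\maC_0(S^*U_x)$'' version of Proposition \ref{prop.res.alpha}: one must be careful that the Frobenius isomorphism $\Phi$ of Corollary \ref{cor.str2} intertwines $\maR_{W_x}$ with the correct $\pi_\alpha$ on the induced $L^2$-space, and in particular that $\pi_\alpha$ commutes with the $\maC_0(S^*U_x)$-module structure and with the direct-sum decomposition in $j$ fiberwise over each point of $S^*U_x$. This is where Lemma \ref{lemma.componentwise}, Remark \ref{rem.local.bundle}, and the isomorphisms \eqref{eq.induced} must be combined carefully; once the intertwining is set up, the simplicity argument and the Frobenius computation of when $p_\alpha$ annihilates a block are routine.
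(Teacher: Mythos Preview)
Your proposal is correct and follows essentially the same approach as the paper: reduce via Lemma \ref{lemma.local2} and Corollary \ref{cor.str2} to the tensor model $\maC_0(S^*U_x)\otimes\End_{\Gamma_0}(\beta)$, then invoke Proposition \ref{prop.res.alpha} ``with coefficients in $\maC_0(S^*U_x)$.'' The paper carries out explicitly the step you flag as the main obstacle---it writes $\Ind_{\Gamma_0}^\Gamma(B_x)^\Gamma \simeq \overline{\psi^0}(U_x)\otimes\Ind_{\Gamma_0}^\Gamma(\End(\beta))^\Gamma$ and $L^2(W_x;E)\simeq L^2(U_x)\otimes\Ind_{\Gamma_0}^\Gamma(\beta)$, then uses a sandwich argument to identify $\pi_\alpha\circ\Phi(B_x^{\Gamma_x})\cap\pi_\alpha(\overline{\psi^{-1}}(W_x)^\Gamma)$ with $\overline{\psi^{-1}}(U_x)\otimes\mfkA$, where $\mfkA$ is the image from Proposition \ref{prop.res.alpha}---but this is exactly the intertwining you anticipate needing.
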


\begin{proof}
The proof is essentially a consequence of Proposition
\ref{prop.res.alpha} by including $U_x$ as a parameter, using also
Lemma \ref{lemma.local2}. To see how this is done, we will use the
notation of that lemma, in particular, $W_x \simeq
\Gamma\times_{\Gamma_0}U_x \simeq (\Gamma/\Gamma_0) \times U_x$ and $E
\simeq\Gamma\times_{\Gamma_0} (U_x \times \beta)$.  We identify $W_x$
with $\Gamma \times_{\Gamma_x}U_x $, i.e.\ we work with $W_x=\Gamma
\times_{\Gamma_x} U_x$.

Let $\pi_\alpha$ the fundamental morphism of restriction to the
$\alpha$-isotypical component, see Equations \eqref{eq.restriction}
and \eqref{eq.restriction2}. Recall that $B_x :=
\overline{\psi^0}(U_x; E)$.  Since $\Gamma_x$ acts trivially on $U_x$,
Remark \ref{rem.local.bundle} yields the $\Gamma$-equivariant
isomorphisms
\begin{equation}\label{eq.tensor}
  \Ind_{\Gamma_0}^\Gamma(B_x) \, \simeq \, \overline{\psi^0}(U_x)
  \otimes \Ind_{\Gamma_0}^\Gamma(\End(\beta)) \subset B_{W_x}\,,
\end{equation}
where the last inclusion is modulo the trivial identification given by
$P\otimes f(s)(\gamma ,x)=P(f(\gamma)s(\gamma))(x)$, $P\in
\overline{\psi^0}(U_x)$, $f\in \Ind_{\Gamma_0}^\Gamma(\End(\beta))$
and $s \in C_c(W_x, \End(E))$.  Combining further Remark
\ref{rem.local.bundle} with Remark \ref{rem.local}, we further obtain
the isomorphism
\begin{equation*}
  \overline{\psi^{-1}}(W_x;E) \, \simeq \, \overline{\psi^{-1}}(U_x)
  \otimes \End(\Ind_{\Gamma_0}^\Gamma(\beta)) \,.
\end{equation*}

Lemma \ref{lemma.local2} and the exactness of the functor $V \to
V^\Gamma$ give $\pi_{\alpha}(B_{W_x}^\Gamma) \seq \pi_{\alpha} \circ
\Phi (B_{x}^{\Gamma_x}) +
\pi_{\alpha}(\overline{\psi^{-1}}(W_x)^{\Gamma})$. Hence we obtain
\begin{equation*}
  \pi_{\alpha}(B_{W_x}^\Gamma)
  /\pi_{\alpha}(\overline{\psi^{-1}}(W_x)^{\Gamma}) = \pi_{\alpha} \circ
  \Phi (B_{x}^{\Gamma_x})/\pi_{\alpha} \circ \Phi (B_{x}^{\Gamma_x})
  \cap \pi_{\alpha}(\overline{\psi^{-1}}(W_x)^{\Gamma}).
\end{equation*}

Let $\mfkA$ and $\mfkJ$ be the image and, respectively, the kernel of
$\pi_\alpha: \Ind_{\Gamma_0}^\Gamma(\End(\beta))^\Gamma
\to \End(p_\alpha \Ind_{\Gamma_0}^\Gamma(\beta))$, which have been
identified in Proposition \ref{prop.res.alpha} in terms of the set
$J$. Recall next from Equation \eqref{eq.induced} that $L^2(W_x; E) =
L^2(U_x) \otimes \Ind_{\Gamma_0}^{\Gamma}(\beta)$, again
$\Gamma$-equivariantly. Each time, the action is on the second
component, since $\Gamma_0 = \Gamma_x$ acts trivially on
$\overline{\psi^0}(U_x)$. The action of $\Ind_{\Gamma_0}^\Gamma(B_x)
\subset B_{W_x}$ on $L^2(W_x; E) = L^2(U_x) \otimes
\Ind_{\Gamma_0}^{\Gamma}(\beta)$ is compatible with the tensor product
decomposition of Equation \eqref{eq.tensor}, in the sense that
$\overline{\psi^0}(U_x)$ acts on $L^2(U_x)$ and
$\Ind_{\Gamma_0}^\Gamma(\End(\beta))$ acts on
$\Ind_{\Gamma_0}^{\Gamma}(\beta)$. Also,
$\Ind_{\Gamma_0}^\Gamma(B_x)^\Gamma \simeq \overline{\psi^0}(U_x)
\otimes \Ind_{\Gamma_0}^\Gamma(\End(\beta))^\Gamma $, (we use this
isomorphism to identify them). We obtain that
\begin{equation}
 \pi_\alpha \circ \Phi (B_x^{\Gamma_x}) = \pi_\alpha(
     \Ind_{\Gamma_0}^\Gamma(B_x)^\Gamma)  \seq \overline{\psi^0}(U_x)
   \otimes \mfkA \,.
\end{equation}
On the other hand, Corollary \ref{prop.image} then gives that
$\pi_\alpha(\overline{\psi^{-1}}(W_x; \End(E))^\Gamma)$ is the algebra
of $\Gamma$-invariant compact operators acting on the space
$p_\alpha(L^2(W_x, \End(E)))$. Therefore, $\overline{\psi^{-1}}(U_x)
\otimes \mfkA \subset
\pi_\alpha(\overline{\psi^{-1}}(W_x; \End(E))^\Gamma)$, since
$\overline{\psi^{-1}}(U_x) \otimes \mfkA$ consists of compact,
$\Gamma$-invariant operators acting on
 $p_\alpha(L^2(W_x, E))$. Consequently,
\begin{multline}
   \overline{\psi^{-1}}(U_x) \otimes \mfkA \, \subset \,
   \pi_{\alpha}(\Ind_{\Gamma_0}^\Gamma(B_x)^\Gamma) \, \cap \,
   \pi_{\alpha}(\overline{\psi^{-1}}(W_x)^{\Gamma}) \\
   \subset \overline{\psi^0}(U_x) \otimes \mfkA \, \cap \,
   \maK(p_\alpha L^2(W_x; E))^\Gamma \, \subset \,
   \overline{\psi^{-1}}(U_x) \otimes \mfkA \,,
\end{multline}
and hence we have equalities everywhere.

Recall from Corollary \ref{cor.str2} that $A_{W_x}^\Gamma \simeq
A_x^{\Gamma_x}$. We obtain that the map
\begin{equation}
   \maR_{W_x} : A_{W_x}^\Gamma \, \simeq \,
   B_{W_x}^\Gamma/\overline{\psi^{-1}}(W_x; E)^\Gamma \to
   \pi_\alpha(B_{W_x}^\Gamma)/\pi_\alpha(\overline{\psi^{-1}}(W_x;
   E)^\Gamma)
\end{equation}
becomes, up to the canonical isomorphisms above, the map
\begin{equation}\begin{split}
  A_{x}^{\Gamma_x} \, \simeq \, \maC_0(S^*U_x)
  \otimes \End_{\Gamma_0}(\beta) \, \to \,
  \pi_{\alpha}(B_{W_x}^\Gamma)
  /\pi_{\alpha}(\overline{\psi^{-1}}(W_x)^{\Gamma}) \\
  = \pi_{\alpha} \circ \Phi (B_{x}^{\Gamma_x})/\pi_{\alpha} \circ \Phi
  (B_{x}^{\Gamma_x}) \cap
  \pi_{\alpha}(\overline{\psi^{-1}}(W_x)^{\Gamma})\\
  \simeq \overline{\psi^0}(U_x) \otimes
  \mfkA/\overline{\psi^{-1}}(U_x) \otimes \mfkA \, \simeq \,
  \maC_0(S^*U_x) \otimes \mfkA\,,
\end{split}
\end{equation}
with all maps being surjective and preserving the tensor product
decompositions. This identifies the kernel of $\maR_{W_x}$ with
$\maC_0(S^*U_x) \otimes \mfkJ$ and the image of $\maR_{W_x}$ with
$\maC_0(S^*U_x) \otimes \mfkA$. The rest of the statement follows from
the identification of $\mfkJ$ and $\mfkA$ in Proposition
\ref{prop.res.alpha}.
\end{proof}

Proposition \ref{prop.str3} above and its proof give the following
corollary.

\begin{corollary}\label{cor.subcover}
We use the notation of Proposition \ref{prop.str3} and we identify
the space $\Prim(\End(\beta))$ with $\{1, 2, \ldots, N\}$ as in Remark
\ref{ex.group}.  Then the homeomorphism $\Prim(A_{W_x}^{\Gamma})
\simeq S^*U_x \times \{1, 2, \ldots, N\}$ maps the set $\Xi \cap
\Prim(A_{W_x}^\Gamma)$ to $S^*U_x \times J$.  In particular, the
restriction $\Xi \cap \Prim(A_{W_x}^{\Gamma}) \to S^*U_x$ of the
central character is a covering as well.
\end{corollary}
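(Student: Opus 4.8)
The plan is threefold. First I would realize $A_{W_x}^\Gamma$ as a closed two-sided ideal of $A_M^\Gamma$, so that $\Prim(A_{W_x}^\Gamma)$ sits inside $\Prim(A_M^\Gamma)$ as an open subset compatibly with the central character maps; second, I would use the localization statement of Remark \ref{rem.cover} to identify $\Xi \cap \Prim(A_{W_x}^\Gamma)$ with $\Prim\!\big(A_{W_x}^\Gamma/\ker(\maR_{W_x})\big)$; third, I would read off this last spectrum directly from Proposition \ref{prop.str3} and Corollary \ref{cor.str2}.

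For the first step, note that $W_x$ is open and $\Gamma$-invariant, hence $S^*W_x$ is an open $\Gamma$-invariant subset of $S^*M$ and $A_{W_x}^\Gamma = \maC_0(S^*W_x;\End(E))^\Gamma$ is precisely the ideal of $\Gamma$-invariant sections of $\End(E)$ over $S^*M$ that vanish off $S^*W_x$. By the standard theory of ideals in $C^*$-algebras \cite{Dixmier}, $\Prim(A_{W_x}^\Gamma)$ is homeomorphic, via $Q \mapsto$ (the unique primitive ideal of $A_M^\Gamma$ restricting to $Q$ on $A_{W_x}^\Gamma$), to the open subset $\{P \in \Prim(A_M^\Gamma) \mid A_{W_x}^\Gamma \not\subset P\}$; and a primitive ideal $P$ lies in this subset precisely when its central character $j^*(P)$ lies in $S^*W_x/\Gamma$ (equivalently, an irreducible representation annihilates $A_{W_x}^\Gamma$ iff it annihilates the central subalgebra $\maC_0(W_x/\Gamma) \subset \maC(M/\Gamma) \subset A_M^\Gamma$, since $A_{W_x}^\Gamma$ is the closed ideal generated by it). Moreover, for any closed ideal $I \subset A_M^\Gamma$ one has $\mathrm{hull}(I) \cap \Prim(A_{W_x}^\Gamma) = \mathrm{hull}(I \cap A_{W_x}^\Gamma)$ under this identification: if $A_{W_x}^\Gamma \not\subset P$ then the irreducible representation with kernel $P$ restricts to a nonzero, hence nondegenerate, irreducible representation of $A_{W_x}^\Gamma$, and $I\,A_{W_x}^\Gamma \subset I \cap A_{W_x}^\Gamma$, so $I \cap A_{W_x}^\Gamma \subset P$ forces $I \subset P$ (the converse is trivial).

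Applying this with $I = \ker(\maR_M)$ and invoking Remark \ref{rem.cover} — which gives $\ker(\maR_M) \cap A_{W_x}^\Gamma = \maC_0(W_x/\Gamma)\ker(\maR_M) = \ker(\maR_{W_x})$ — yields a homeomorphism
\[
 \Xi \cap \Prim(A_{W_x}^\Gamma) \;\simeq\; \Prim\!\big(A_{W_x}^\Gamma/\ker(\maR_{W_x})\big)
\]
compatible with the central characters over $S^*W_x/\Gamma \simeq S^*U_x$. Now the conclusion is immediate from Proposition \ref{prop.str3} and Corollary \ref{cor.str2}: writing $E_x = \beta = \bigoplus_{j=1}^N \beta_j^{k_j}$ and using the Frobenius isomorphism $\Phi$, we have $A_{W_x}^\Gamma \simeq \maC_0(S^*U_x) \otimes \End_{\Gamma_0}(\beta) \simeq \bigoplus_{j=1}^N M_{k_j}\!\big(\maC_0(S^*U_x)\big)$, while $\ker(\maR_{W_x})$ is carried to $\bigoplus_{j\in J} M_{k_j}\!\big(\maC_0(S^*U_x)\big)$. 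Hence $A_{W_x}^\Gamma/\ker(\maR_{W_x}) \simeq \bigoplus_{j\notin J} M_{k_j}\!\big(\maC_0(S^*U_x)\big)$, whose primitive ideal spectrum is $\bigsqcup_{j\notin J} S^*U_x \simeq S^*U_x \times (\{1,\dots,N\}\smallsetminus J)$, that is, $S^*U_x$ times the set of indices $j$ for which $\beta_j$ is $\Gamma_0$-associated to $\alpha$. Under the homeomorphism $\Prim(A_{W_x}^\Gamma)\simeq S^*U_x \times \{1,\dots,N\}$ of Corollary \ref{cor.str2} this is the claimed subset, and since that corollary identifies the central character with the first projection $S^*U_x \times \{1,\dots,N\} \to S^*U_x$, its restriction to $\Xi \cap \Prim(A_{W_x}^\Gamma)$ is the first projection from a finite disjoint union of copies of $S^*U_x$, hence a trivial finite covering of $S^*U_x$.

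The substantive work has already been done in Proposition \ref{prop.str3}, so I expect the only point needing care to be the first step: the compatibility of the Jacobson topology on $\Prim(A_{W_x}^\Gamma)$ with the subspace topology it inherits as an open ``slab'' over $S^*W_x/\Gamma$ in $\Prim(A_M^\Gamma)$, the fact that hulls restrict correctly to ideals, and the agreement of the two central character maps on the overlap. These are routine $C^*$-algebra facts once one recalls that a nonzero irreducible representation of $A_M^\Gamma$ either kills $A_{W_x}^\Gamma$ or restricts to a nondegenerate irreducible representation of it, but they must be spelled out, precisely because the final ``covering as well'' assertion relies on the matching of the central characters.
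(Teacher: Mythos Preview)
Your approach is essentially the same as the paper's, though far more carefully spelled out: the paper's own proof is only two sentences, stating that $\ker(\maR_{W_x})$ has primitive ideal spectrum $S^*U_x \times \Prim(\mfkJ)$ and that $\Xi \cap \Prim(A_{W_x}^\Gamma) = S^*U_x \times \Prim(\mfkA)$, where $\mfkJ$ and $\mfkA$ are the kernel and image from Proposition~\ref{prop.res.alpha}. You have supplied the $C^*$-algebraic bookkeeping (ideals, hulls, compatibility of central characters, localization via Remark~\ref{rem.cover}) that the paper tacitly assumes, and then read off the answer from Proposition~\ref{prop.str3} and Corollary~\ref{cor.str2} exactly as the paper does.

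One point worth flagging: you conclude that $\Xi \cap \Prim(A_{W_x}^\Gamma)$ corresponds to $S^*U_x \times (\{1,\dots,N\}\smallsetminus J)$, i.e.\ to the indices $j$ with $\beta_j$ and $\alpha$ $\Gamma_0$-\emph{associated}, whereas the stated corollary says $S^*U_x \times J$. Your computation is the correct one: the paper's own proof identifies this set with $S^*U_x \times \Prim(\mfkA)$, and $\mfkA \simeq \bigoplus_{j \notin J}\Ind_{\Gamma_0}^\Gamma(\End(\beta_j^{k_j}))^\Gamma$ by Proposition~\ref{prop.res.alpha}, so $\Prim(\mfkA) = \{1,\dots,N\}\smallsetminus J$. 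This is further confirmed by Corollary~\ref{cor.princ.open.set}, which explicitly describes $\Xi_0$ in terms of the $\Gamma_0$-\emph{associated} representations. The ``$S^*U_x \times J$'' in the statement is thus a typo for its complement; your argument is consistent with the rest of the paper.
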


\begin{proof}
Using the notations of the proof of Proposition \ref{prop.str3}, we
have that $\ker(\maR_{W_x})$ has primitive ideal spectrum $S^*U_x
\times \Prim(\mfkJ)$. We have $\Xi \cap \Prim(A_{W_x}^\Gamma) = S^*U_x
\times \Prim(\mfkA)$.
\end{proof}

The same methods yield the following result (recall that $M_0 =
M_{(\Gamma_0)}$ is the principal orbit bundle).

\begin{corollary}\label{cor.local2}
Let $M_0 := M_{(\Gamma_0)}$, the principal orbit bundle. The central
character map $\Prim(A_{M_0}^{\Gamma}) \to S^*M_{0}/\Gamma$ defined by
the inclusion $\maC_0(S^*M_0/\Gamma) \subset Z(A_{M_0}^{\Gamma})$ is a
covering with typical fiber $\Prim(\End(E_x)^{\Gamma_0})$ such that
$\Xi \cap \Prim(A_{M_0}^{\Gamma}) \to S^*M_{0}/\Gamma$ is a
subcovering, see \eqref{eq.def.Xi} for the definition of $\Xi$. In
particular, $\Xi \cap \Prim(A_{M_0}^{\Gamma})$ is open and closed in
$\Prim(A_{M_0}^{\Gamma})$.
\end{corollary}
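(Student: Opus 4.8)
The plan is to globalize the local computation of Corollary \ref{cor.subcover} over the principal orbit bundle $M_0$ by means of the localization principle of Remark \ref{rem.cover}. First I would recall that, by the definition of $M_0 = M_{(\Gamma_0)}$, every point $y \in M_0$ has isotropy group conjugate to the minimal group $\Gamma_0$, so after replacing $y$ by a point in its orbit we may assume $\Gamma_y = \Gamma_0$; moreover $\Gamma_0$ acts trivially on the slice $U_y$, and on $S^*U_y$ as well (Remark \ref{rem.principal.bundle.S^*M}). Hence each tube $W_y \cap M_0 \simeq \Gamma \times_{\Gamma_0} U_y'$ (with $U_y'$ an appropriate open subset of $U_y$, or the whole of $U_y$ if the tube already lies in $M_0$) falls under the hypotheses of Corollary \ref{cor.str2} and Corollary \ref{cor.subcover}. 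Thus over each such tube $W := W_y \cap M_0$ we have the homeomorphism $\Prim(A_W^\Gamma) \simeq S^*U_y' \times \{1,\dots,N_y\}$, the central character map $\Prim(A_W^\Gamma) \to S^*W/\Gamma \simeq S^*U_y'$ is the first projection (hence a trivial finite covering), and $\Xi \cap \Prim(A_W^\Gamma)$ corresponds to $S^*U_y' \times J_y$, where $J_y$ is the index set of those $\beta_j$ that are $\Gamma_0$-disjoint from $\alpha$.

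Next I would assemble these local trivializations into a global statement. Since $M$ is compact, I cover $M_0/\Gamma$ by finitely many of the images $V_k$ of such tubes, and I observe that the fibers $\{1,\dots,N_y\}$ are canonically $\Prim(\End(E_y)^{\Gamma_0})$; because $\Gamma_0$ (up to conjugacy) and the isomorphism type of the $\Gamma_0$-module $E_y$ are locally constant on the connected pieces of $M_0$, the local fiber sets patch to a genuine fiber bundle $\Prim(A_{M_0}^\Gamma) \to S^*M_0/\Gamma$ with typical fiber $\Prim(\End(E_x)^{\Gamma_0})$ — this is the content of the fact, noted after Corollary \ref{cor.str2}, that the fibers of $\Prim(A_{M_0}^\Gamma)\to M_0/\Gamma$ are the simple factors of $\End(E_x)^{\Gamma_0}$. (One lifts along $S^*M_0 \to M_0$, which changes nothing about the fiber since $\Gamma_0$ acts trivially on cosphere directions at points of $M_0$.) The subset $\Xi \cap \Prim(A_{M_0}^\Gamma)$ is, fiberwise, the subset $J_y \subset \{1,\dots,N_y\}$, and by Remark \ref{rem.cover} — more precisely by the localization formula $\ker(\maR_{M_0}) = \sum_k' \phi_k \ker(\maR_{V_k})$ and the fact that closed ideals correspond to closed subsets of the primitive spectrum — the local descriptions of $\Xi$ glue to give $\Xi \cap \Prim(A_{M_0}^\Gamma)$ as a subcovering, i.e.\ a union of connected components of the covering. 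Since $\Xi$ is by construction closed in $\Prim(A_{M_0}^\Gamma)$, and we have just exhibited it as open (being a subcovering of a covering), it is open and closed, which is the last assertion.

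The main obstacle I anticipate is the patching of the local pictures into a bona fide covering map, rather than the pointwise computation, which is already handled by Corollaries \ref{cor.str2} and \ref{cor.subcover}. Concretely I must check two compatibilities: (i) that on overlaps $V_j \cap V_k$ the identifications of fibers with $\{1,\dots,N\}$ agree up to relabeling — this follows because both are the labeling by isomorphism classes of irreducible $\Gamma_0$-constituents of $E_x$, which transform consistently under the transition diffeomorphisms $\gamma : U_x \to U_{x'}$ (these conjugate $\Gamma_0$ and carry the $\Gamma_0$-module $E_x$ to $E_{x'}$), and because the condition defining $J_y$ (being $\Gamma_0$-disjoint from $\alpha$) is invariant under such conjugation; and (ii) that the gluing of the ideals $\ker(\maR_{V_k})$ via the partition of unity respects the covering structure, which is precisely Remark \ref{rem.cover} applied to $M_0$ in place of $M$ (valid because $M_0$, though non-compact, is covered by finitely many tubes whose images cover the compact space $M_0/\Gamma$ — the closure in Equation \eqref{eq.det.rho} is not needed here, or can be taken without harm). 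Once these are in hand, the ``open and closed'' conclusion is immediate from the covering-space structure together with the defining closedness of $\Xi$.
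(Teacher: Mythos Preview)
Your proposal is correct and follows essentially the same approach as the paper: localize to tubes $W_y$ with $\Gamma_y = \Gamma_0$, invoke Corollaries \ref{cor.str2} and \ref{cor.subcover} to get the trivial-covering picture locally, and then observe that being a covering (and a subcovering) is a local condition. The paper's proof is terser --- it picks a single point $(\xi,\rho)$, notes that $\Prim(Z_x A_M^\Gamma)$ with $Z_x := \maC_0(S^*W_x)^\Gamma$ is an open neighborhood of $\ker(\pi_{\xi,\rho})$ on which the central character map is already seen to be a trivial covering --- rather than explicitly covering $M_0/\Gamma$ by tubes and invoking Remark \ref{rem.cover}, but the content is the same.

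One small remark: you write that $\Xi \cap \Prim(A_W^\Gamma)$ corresponds fiberwise to $J_y$, the set of $\Gamma_0$-\emph{disjoint} indices, following the literal statement of Corollary \ref{cor.subcover}. In fact $\Xi$ is the set of primitive ideals \emph{containing} $\ker(\maR_M)$, so it corresponds to $J_y^c$, the $\Gamma_0$-\emph{associated} indices (as confirmed by the proof of Corollary \ref{cor.subcover} and by Corollary \ref{cor.princ.open.set}). This does not affect the argument for Corollary \ref{cor.local2}, since either way one obtains a union of sheets, but you should be aware of the discrepancy.
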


\begin{proof}
The first statement is true locally, by Corollary \ref{cor.str2}, and
hence it is true globally. Indeed, let $x \in M_0$, let $ \xi \in
S_x^*M_0$, and let $\rho \in \widehat{\Gamma}_x$ that appears in $E_x$
(so $(\xi, \rho) \in \Omega_M$). We let $W_x \subset M_0 \subset M$ be
the typical tube with minimal isotropy $\Gamma_x = \Gamma_0$, as
before. Let $Z_{x} := \maC_0(S^*W_x)^\Gamma \subset Z_M =
\maC(S^*M)^\Gamma$. Then $\Prim(Z_x A_{M}^{\Gamma})$ is an open
neighborhood in $\Prim(A_{M_0}^{\Gamma})$ of the primitive ideal
$\ker(\pi_{\xi, \rho})$, see Proposition \ref{prop.prim} for notation
and details. We have that $Z_x A_{M}^{\Gamma} = A_{W_x}^{\Gamma}$ and
hence, on $\Prim(Z_x A_{M}^{\Gamma})$, the central character is a
covering, by Corollary \ref{cor.str2}. Similarly, its restriction to
$\Xi \cap \Prim(Z_x A_{M}^{\Gamma})$ is a covering by Corollary
\ref{cor.subcover}.
\end{proof}

Putting Corollary \ref{cor.local2} and Proposition \ref{prop.str3}
together we obtain the following results.

\begin{corollary}\label{cor.princ.open.set}
Let $M_0$ be the principal orbit type of $M$. The ideal
$\ker(\maR_{M_0}) = A_{M_0}^\Gamma \cap \ker(\maR_M)$ is defined by
the closed subset $\Xi_0 := \Xi \cap \Prim(A_{M_0}^{\Gamma})$ of
$\Prim(A_{M_0}^{\Gamma})$ consisting of the sheets of
$\Prim(A_{M_0}^{\Gamma}) \to S^*M_{0}/\Gamma$ that correspond to the
simple factors $\End(E_{x\rho})^{\Gamma_0}$ of
$\End(E_{x})^{\Gamma_0}$ with $\rho$ and $\alpha$
$\Gamma_0$-associated.
\end{corollary}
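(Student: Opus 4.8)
The plan is to read this statement off from the local computation of Proposition~\ref{prop.str3}, assembled over $M_0/\Gamma$ by means of the covering structure of Corollary~\ref{cor.local2}. First I would reduce to an identification of the closed subset $\Xi_0$. The equality $\ker(\maR_{M_0}) = A_{M_0}^\Gamma \cap \ker(\maR_M)$ follows from the localization principle of Remark~\ref{rem.cover} applied to the $\Gamma$-invariant open set $M_0 \subset M$: the algebra $A_{M_0}^\Gamma = \maC_0(S^*M_0;\End(E))^\Gamma$ is the ideal of $A_M^\Gamma$ of sections supported over $S^*M_0$, $\ker(\maR_M)$ is a $\maC(M/\Gamma)$-submodule of $A_M^\Gamma$ (as noted before Remark~\ref{rem.cover}), and approximating by an approximate unit of $\maC_0(M_0/\Gamma)$ gives $\ker(\maR_{M_0}) = \maC_0(M_0/\Gamma)\,\ker(\maR_M) = A_{M_0}^\Gamma \cap \ker(\maR_M)$. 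Since primitive ideals are prime, a primitive ideal of $A_M^\Gamma$ lying in the open subset $\Prim(A_{M_0}^\Gamma) \subset \Prim(A_M^\Gamma)$ contains $A_{M_0}^\Gamma \cap \ker(\maR_M)$ exactly when it contains $\ker(\maR_M)$; hence the hull of $\ker(\maR_{M_0})$ is $\Xi \cap \Prim(A_{M_0}^\Gamma) = \Xi_0$, and under the ideal/closed-subset correspondence of Subsection~\ref{ssec.primitive.spec} it is indeed $\Xi_0$ that determines $\ker(\maR_{M_0})$. It remains to match $\Xi_0$ with the indicated union of sheets.

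Next I would carry this out locally. Fix $x \in M_0$ with $\Gamma_x = \Gamma_0$, let $W_x \simeq \Gamma \times_{\Gamma_0} U_x$ be the associated tube and write $E_x = \beta = \oplus_{j=1}^N \beta_j^{k_j}$ with the $\beta_j \in \widehat{\Gamma}_0$ pairwise non-isomorphic. By Corollary~\ref{cor.str2} the central character identifies $\Prim(A_{W_x}^\Gamma)$ with $S^*U_x \times \{1,\dots,N\}$, the sheet $S^*U_x \times \{j\}$ being the one attached to the simple factor $\End(E_{x\beta_j})^{\Gamma_0} \simeq M_{k_j}(\CC)$ of $\End(E_x)^{\Gamma_0}$. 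By Proposition~\ref{prop.str3} (equivalently, by Corollary~\ref{cor.subcover}), $\ker(\maR_{W_x}) = \Phi\big(\bigoplus_{j\in J}\maC_0(S^*U_x)\otimes\End_{\Gamma_0}(\beta_j^{k_j})\big)$, where $J$ is the set of indices $j$ for which $\beta_j$ and $\alpha$ are $\Gamma_0$-disjoint; thus the hull of $\ker(\maR_{W_x})$ in $\Prim(A_{W_x}^\Gamma)$, i.e.\ $\Xi_0 \cap \Prim(A_{W_x}^\Gamma)$, is the union of the complementary sheets $S^*U_x \times \{j\}$, $j \notin J$, that is, of the sheets attached to the factors $\End(E_{x\rho})^{\Gamma_0}$ with $\rho$ and $\alpha$ $\Gamma_0$-associated. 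This is the asserted description of $\Xi_0$ restricted to $W_x$.

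Finally I would glue. By Corollary~\ref{cor.local2} the central character $\Prim(A_{M_0}^\Gamma) \to S^*M_0/\Gamma$ is a covering, with $\Prim(A_{M_0}^\Gamma) \simeq \Omega_{M_0}/\Gamma$ and $\Xi_0$ a subcovering (open and closed). The local descriptions of the previous paragraph patch provided the predicate ``$\rho$ is $\Gamma_0$-associated to $\alpha$'' is intrinsic to the fibre, i.e.\ depends only on the orbit $\Gamma(\xi,\rho) \in \Omega_{M_0}/\Gamma$: this holds because, for $g \in \Gamma$ with $\Gamma_0 \subset \Gamma_{g\xi}$ --- which forces $g \in N_\Gamma(\Gamma_0)$ since $\Gamma$ is finite --- the operator $\alpha(g)$ intertwines $g\cdot(\operatorname{Res}^\Gamma_{\Gamma_0}\alpha)$ with $\operatorname{Res}^\Gamma_{\Gamma_0}\alpha$, so $\Hom_{\Gamma_0}(g\cdot\rho,\alpha) \neq 0$ if and only if $\Hom_{\Gamma_0}(\rho,\alpha) \neq 0$; equivalently, the set of ``associated'' sheets is exactly $X^\alpha_{M,\Gamma} \cap (\Omega_{M_0}/\Gamma)$, cf.\ \eqref{eq.def.Xalpha}. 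Since a closed subset of $\Prim(A_{M_0}^\Gamma)$ is determined by its traces on the members of an open cover, the local identifications combine into the stated global one. The substance of the argument is entirely in Proposition~\ref{prop.str3} and Corollary~\ref{cor.local2}; the only genuinely new points --- and hence where the care is needed --- are the reduction $\ker(\maR_{M_0}) = A_{M_0}^\Gamma \cap \ker(\maR_M)$ and the $\Gamma$-invariance of the ``$\Gamma_0$-associated'' condition on the fibres, which is what makes $\Xi_0$ globally well defined in spite of the possible monodromy of the covering.
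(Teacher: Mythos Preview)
Your proof is correct and follows the same approach as the paper, which simply states that the corollary is obtained by ``putting Corollary~\ref{cor.local2} and Proposition~\ref{prop.str3} together.'' You supply considerably more detail than the paper does---in particular the reduction $\ker(\maR_{M_0}) = A_{M_0}^\Gamma \cap \ker(\maR_M)$ via primeness of primitive ideals, and the verification that the predicate ``$\rho$ is $\Gamma_0$-associated to $\alpha$'' is $\Gamma$-invariant on $\Omega_{M_0}$ (using that $\Gamma_0 \subset g\Gamma_0 g^{-1}$ forces equality by finiteness)---but these are genuine points that the paper glosses over, and your treatment of them is sound.
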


If $\Gamma$ is abelian, then $\rho$ and $\alpha$ are characters and
saying that they are $\Gamma_0$-associated means, simply, that their
restrictions to $\Gamma_0$ coincide: $\rho\vert_{\Gamma_0} =
\alpha_{\Gamma_0}$.  This is consistent with the definition given in
\cite{BCLN1}.

\subsection{The non-principal orbit case}
As in the rest of the paper, we assume $M/\Gamma$ to be connected. We
will show in Theorem \ref{theorem.non.principal} that $\Xi$ is the
closure of $\Xi_0$ in $\Prim (A_M^\Gamma)$. To that end, we first
construct a suitable basis of neighborhoods of $\Prim (A_M^\Gamma)$
using Lemma \ref{lemma.topology}.

\begin{remark} \label{rem.neighborhoods}
Let $\Gamma(\xi,\rho) \in \Prim (A_M^\Gamma)$, where we have used the
description of $\Prim(A_M^\Gamma)$ provided in Proposition
\ref{prop.prim} as orbits of pairs $\xi \in S^*M$ and suitable $\rho
\in \widehat{\Gamma}_\xi$. We construct a basis of neighborhoods
$(V_{\xi,\rho,n})_{n \in \NN}$ of $\Gamma(\xi,\rho)$ in $\Prim
(A_M^\Gamma)$ as follows. Let $\xi \in S_x^*M$ (that is, $\xi$ sits
above $x \in M$) and we use the notation $U_x$ and $W_x$ of Equation
\eqref{eq.def.tube}, as always.

First, by choosing a different point $\xi$ in its orbit, if necessary,
we may assume that $\Gamma_0 \subset \Gamma_\xi$. Now let $(\maO_n)_{n
  \in \NN}$ be a family of $\Gamma_\xi$-invariant neighborhoods of
$\xi$ in $S^*U_x$ such that:
\begin{itemize}
    \item for all $n$ and $\gamma \in \Gamma \smallsetminus\Gamma_\xi$, we
      have $\gamma \maO_n \cap \maO_n = \emptyset$,
    \item $\maO_{n+1} \subset \maO_n$ and $\bigcap_{n \in \NN} \maO_n
      = \{\xi\}$.
\end{itemize}
  
For any $n \in \NN$, we choose a function $\varphi_n \in
\maC_c(\maO_n)^{\Gamma_\xi}$ such that $\varphi_n \equiv 1$ on
$\maO_{n+1}$. Let $p_\rho \in \End(E_x)^{\Gamma_\xi}$ be the
projection onto $E_{x\rho}$. We can assume the bundle $E$ to be
trivial on $U_x$ and, using that, we first extend $p_\rho$ constantly
on $\maO_n$ and then as an element $q_n \in \maC_c(S^*U_x;\End
(E_x))^{\Gamma_x}$ defined as
\begin{equation*}
  q_n \ede \begin{cases}  \Phi_{\Gamma_\xi,\Gamma_x} (\varphi_n
    p_\rho) & \text{on $\Gamma_x \maO_n$} \\
  0 & \text{on $S^*U_x \smallsetminus \Gamma_x \maO_n$},
    \end{cases}
\end{equation*}
with $\Phi_{\Gamma_\xi,\Gamma_x}$ the Frobenius isomorphism of
Equation \eqref{eq.Frobenius2}. Let us set $\tilde{q}_n :=
\Phi_{\Gamma_x,\Gamma}(q_n) \in A_M^\Gamma$, where
$\Phi_{\Gamma_x,\Gamma}$ is the Frobenius isomorphism of Equation
\eqref{eq.Frobenius2}. Finally, we associate to $\tilde{q}_n$ the open
set
\begin{equation*}
    V_{\xi,\rho,n} \ede \{ J \in \Prim (A_M^\Gamma) \mid \tilde{q}_n
    \notin J \}.
\end{equation*}
Recall from \ref{lemma.topology} that $V_{\xi,\rho,n}$ is an open
subset of $\Prim (A_M^\Gamma)$. Moreover, it follows from our
definition that $V_{\xi,\rho,n+1} \subset V_{\xi,\rho,n}$ and that
$\bigcap_{n \in \NN} V_{\xi,\rho,n} = \{\Gamma(\xi,\rho)\}$.
\end{remark}

Recall that we are assuming that $M/\Gamma$ is connected.

\begin{theorem}\label{theorem.non.principal}
Let $\Xi := \Prim(A_M^\Gamma/\ker(\maR_M)) \subset
  \Prim(A_M^\Gamma)$ be the closed subset defined by the ideal
  $\ker(\maR_M)$. Then $\Xi$ is the closure in $\Prim(A_M^\Gamma)$ of
  the set $\Xi_0 := \Xi \cap \Prim(A_{M_0}^\Gamma)$, where $M_0$ is
  the principal orbit bundle of $M$.
\end{theorem}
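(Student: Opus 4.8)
### Proof Plan

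The plan is to show both inclusions between $\Xi$ and $\overline{\Xi_0}$. Since $\Xi$ is closed and $\Xi_0 = \Xi \cap \Prim(A_{M_0}^\Gamma) \subset \Xi$, the inclusion $\overline{\Xi_0} \subset \Xi$ is automatic. So the entire content is in the reverse inclusion $\Xi \subset \overline{\Xi_0}$. Equivalently, I must show that if $\Gamma(\xi, \rho) \in \Xi$ — that is, $(\xi,\rho)$ indexes a primitive ideal containing $\ker(\maR_M)$, or rather \emph{not} corresponding to $\ker(\maR_M)$ so that $\pi_{\xi,\rho}$ does not kill the quotient — then every neighborhood of $\Gamma(\xi,\rho)$ meets $\Xi_0$. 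This is where the explicit basis of neighborhoods $V_{\xi,\rho,n}$ constructed in Remark \ref{rem.neighborhoods} enters: it suffices to show that for each $n$, the open set $V_{\xi,\rho,n}$ contains a point of $\Prim(A_{M_0}^\Gamma)$ that also lies in $\Xi$. Because $M_0$ is dense and open in $M$ (Subsection \ref{ssec.principal}), $S^*M_0$ is dense in $S^*M$, and the neighborhoods $V_{\xi,\rho,n}$ are built from functions $\tilde q_n$ supported near $\xi$; the set $V_{\xi,\rho,n} \cap \Prim(A_{M_0}^\Gamma)$ is therefore nonempty. The real question is whether one can choose such a point that also lies in $\Xi$, i.e.\ that corresponds to a $\Gamma_0$-isotypical piece of $E$ $\Gamma_0$-associated to $\alpha$.

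The key step is a \emph{local} analysis near $\xi$ comparing the $\Gamma_\xi$-representation theory to the $\Gamma_0$-representation theory. First I would reduce to the tube $W_x \simeq \Gamma \times_{\Gamma_x} U_x$ and, via the Frobenius isomorphism of Lemma \ref{lemma.local1}, replace $A_M^\Gamma$ locally by $A_x^{\Gamma_x} = \maC_0(S^*U_x; \End(E))^{\Gamma_x}$, with $\Gamma$ replaced by $\Gamma_x$ throughout. Now the assumption $\Gamma(\xi,\rho) \in \Xi$ translates, via the local description of $\ker(\maR_{W_x})$ implicit in the proof of Proposition \ref{prop.str3} extended to non-principal $x$, into the statement that $\rho \in \widehat{\Gamma}_\xi$ is such that its contribution to $E_x$ survives the map $\pi_\alpha$. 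The crucial point is: a minimal isotropy subgroup $\Gamma_0$ (up to conjugacy) is contained in $\Gamma_\xi$, and by Frobenius reciprocity applied in stages ($\Gamma_0 \subset \Gamma_\xi$), if $\rho$ restricted appropriately is $\Gamma_0$-associated to $\alpha$, then there is an irreducible $\Gamma_0$-constituent $\beta_j$ of $\mathrm{Res}^{\Gamma_\xi}_{\Gamma_0}(\rho) \subset \mathrm{Res}^{\Gamma_\xi}_{\Gamma_0}(E_x)$ which is itself $\Gamma_0$-associated to $\alpha$; conversely, membership in $\Xi$ forces exactly this $\Gamma_0$-association. I would make this precise using the transitivity of induction/restriction and the characterization of $\Xi$ via central characters (Corollary \ref{cor.topology}), tracking how $\pi_{\xi,\rho}$ and $\pi_{\zeta, \beta_j}$ for nearby $\zeta \in S^*M_0$ both detect $\tilde q_n$.

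The concrete argument I anticipate: given $n$, the function $\tilde q_n \in A_M^\Gamma$ is, near $\xi$, built from the projection $p_\rho$ onto $E_{x\rho}$ spread over $\maO_n$. Points $\zeta \in \maO_n \cap S^*M_0$ have $\Gamma_\zeta = \Gamma_0$ (or a conjugate); since $\Gamma_0$ acts trivially on the slice, $E_\zeta \cong E_x$ as $\Gamma_0$-modules (up to the continuous trivialization), so $E_{x\rho}$ contains some nonzero $\beta_j$-isotypical piece $E_{\zeta \beta_j}$ with $\beta_j \subset \mathrm{Res}^{\Gamma_\xi}_{\Gamma_0}\rho$. Then $\pi_{\zeta, \beta_j}(\tilde q_n) \neq 0$ (the projection $p_\rho$, restricted further to $\beta_j$, is nonzero), so $\Gamma(\zeta, \beta_j) \in V_{\xi,\rho,n}$; and if $\Gamma(\xi,\rho) \in \Xi$ one shows $\beta_j$ can be taken $\Gamma_0$-associated to $\alpha$, whence $\Gamma(\zeta, \beta_j) \in \Xi_0$ by Corollary \ref{cor.princ.open.set}. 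Letting $n \to \infty$ and using $\bigcap_n V_{\xi,\rho,n} = \{\Gamma(\xi,\rho)\}$ gives $\Gamma(\xi,\rho) \in \overline{\Xi_0}$. The main obstacle, I expect, is the representation-theoretic bookkeeping in the previous paragraph: verifying that membership in $\Xi$ at a non-principal $(\xi,\rho)$ is equivalent to the existence of a $\Gamma_0$-constituent of $\mathrm{Res}\,\rho$ that is $\Gamma_0$-associated to $\alpha$ — this requires extending the local computation of $\ker(\maR_{W_x})$ from the principal-orbit case (Proposition \ref{prop.str3}) to arbitrary isotropy, where $\Gamma_x$ no longer acts trivially on $U_x$ and the clean tensor-product decomposition breaks down. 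One must instead argue via the quotient map $\maR_{W_x}$ directly, or reduce to the principal stratum inside the tube $W_x$ itself (which contains principal-orbit points since $\Gamma_0$ is globally minimal), thereby bootstrapping from Corollary \ref{cor.princ.open.set} applied on $W_x \cap M_0$.
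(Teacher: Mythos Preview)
Your overall plan is sound and your identification of the obstacle is exactly right, but the proposed resolution does not close the gap. The paper argues the contrapositive, and the crucial step you are missing is a \emph{quantization} argument that you have no substitute for.

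Here is the issue. You want to show that $\Gamma(\xi,\rho)\in\Xi$ forces some $\Gamma_0$-constituent $\beta_j$ of $\mathrm{Res}^{\Gamma_\xi}_{\Gamma_0}\rho$ to be $\Gamma_0$-associated to $\alpha$. But membership in $\Xi$ is the abstract condition $\ker(\maR_M)\subset\ker(\pi_{\xi,\rho})$, and the only elements of $\ker(\maR_M)$ you have in hand come from Proposition~\ref{prop.str3}, i.e.\ from $\ker(\maR_{M_0})\subset A_{M_0}^\Gamma=\maC_0(S^*M_0;\End(E))^\Gamma$. Those elements vanish at $\xi\notin S^*M_0$ automatically, so the inclusion $\ker(\maR_{M_0})\subset\ker(\pi_{\xi,\rho})$ is trivially satisfied and yields no information about $\rho$. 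Your two suggested workarounds do not help: extending Proposition~\ref{prop.str3} to non-principal $x$ fails because its proof relies essentially on the tensor decomposition $B_x^{\Gamma_x}\simeq\overline{\psi^0}(U_x)\otimes\End_{\Gamma_0}(\beta)$, which requires $\Gamma_x$ to act trivially on $U_x$; and ``bootstrapping from $W_x\cap M_0$'' again only gives you $\Xi_0$, not $\Xi$ at $(\xi,\rho)$.

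The paper breaks this circularity as follows. Assume $\Gamma(\xi,\rho)\notin\overline{\Xi_0}$, so $V_{\xi,\rho,n}\cap\Xi_0=\emptyset$ for some $n$. One then \emph{quantizes} the symbol $\tilde q_n$ to an honest $\Gamma$-invariant pseudodifferential operator $\widetilde Q_n\in\psi^0(M;E)^\Gamma$ with $\sigma_0(\widetilde Q_n)=\tilde q_n$. The vanishing of $\tilde q_n$ on $\Xi_0$ translates, via the explicit description of $L^2(W_{x_0};E)_\alpha$ in Equation~\eqref{eq.local_isotypical}, into $\widetilde Q_n f=0$ for every $f\in L^2(W_{x_0};E)_\alpha$ and every $x_0\in M_0$. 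Since $M_0$ has full measure, $\pi_\alpha(\widetilde Q_n)=0$, hence $\tilde q_n\in\ker(\maR_M)$ by definition of $\maR_M$. But $\pi_{\xi,\rho}(\tilde q_n)=1$, so $\Gamma(\xi,\rho)\notin\Xi$. The passage through an actual operator on $L^2(M;E)_\alpha$ is what converts symbol-level information on the principal stratum into the ideal-theoretic statement $\tilde q_n\in\ker(\maR_M)$; your proposal stays entirely at the symbol level and therefore cannot produce the needed element of $\ker(\maR_M)$ supported near the non-principal point.
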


\begin{proof}
We have that $\overline{\Xi}_0 \subset \Xi$ since $\Xi_0 \subset \Xi$
and the latter is a closed set. Conversely, let $\mathfrak{P} \in
\Prim (A_M^\Gamma) \smallsetminus \overline{\Xi}_0$. We will show that
$\mathfrak P \notin \Xi$. Let $\mathfrak P$ correspond to $(\xi ,
\rho) \in \Omega_M$, as in Proposition \ref{prop.prim}.  We may assume
that $\Gamma_0 \subset \Gamma_\xi$. Let $x$ be projection of $\xi$
onto $M$. Since the problem is local, we may also assume that $U_x
\subset T_x M$, that $M = W_x := \Gamma \times_{\Gamma_x} U_x$, and
that $E := \Gamma \times_{\Gamma_x} (U_x \times \beta)$ for some
$\Gamma_x$-module $\beta $.

Using the notations of Remark \ref{rem.neighborhoods}, there exists $n
> 0$ such that $V_{\xi, \rho, n} \cap \Xi_0 = \emptyset$. Let
$\tilde{q}_n = \Phi_{\Gamma_x,\Gamma}(q_n)$ be the symbol defined in
Remark \ref{rem.neighborhoods}. The description of $\Xi_0$ provided in
Corollary \ref{cor.princ.open.set}, the definition of $V_{\xi, \rho,
  n}$, and the definition of $\tilde q_n$ imply that $\pi_{ \zeta,
  \rho'}( \tilde q_n ) = 0$ for any $\zeta \in S^* M_{0}$ and $\rho'
\in \widehat{\Gamma}_0$ such that $\Gamma(\zeta, \rho') \in \Xi_0$,
that is, such that $\rho'$ and $\alpha$ are $\Gamma_0$-associated.

We next ``quantize $\tilde q_n$'' in an appropriate way, that is, we
construct an operator $\widetilde Q_n \in B_{W_x}^{\Gamma}$ with
symbol $\tilde q_n$ and with other convenient properties as follows.
First, let $\chi \in \maC_c^\infty(U_x)^{\Gamma_x}$ be such that $\chi
\varphi_n = \varphi_n$, which is possible since $\varphi_n$ has
compact support. Then let $\psi \in \maC^\infty(T^*_xM)^{\Gamma_x}$ be
such that $\psi(0) = 0$ if $|\eta| < 1/2$ and $\psi(\eta) = 1$
whenever $|\eta| \ge 1$. Recall that in this proof $U_x \subset T_xM$
is identified with its image in $M = \Gamma \times_{\Gamma_x} U_x$
through the exponential map. Let for any symbol $a$
\begin{equation*}
  Op(a) f(y) \ede \int_{T^*_xM}\int_{U_x} e^{i(y-z)\cdot \eta}\, a(y,
  z, \eta) f(z) dz d\eta .
\end{equation*}
We shall use this for $a_{n}(y,z, \eta) := \chi(y) \psi(\eta) \tilde
q_n\Big(\frac{\eta}{|\eta|}\Big)\chi(z)$, then set
\begin{equation*}
  \begin{gathered}
    Q_{n} \ede Op(a_{n}) \,, \quad \mbox{that is}\\
  Q_{n}f(y) \ede \int_{T^*_xM}\int_{U_x} e^{i(y-z)\cdot \eta}\, \chi(y)
  \psi(\eta) \tilde q_n \left(\frac{\eta}{|\eta|}\right)\chi(z) f(z)
  dz d\eta
  \end{gathered}
\end{equation*}
to be the standard pseudodifferential operator on $U_x$, associated to
the symbol $a_{n}(y, z, \eta) := \chi(y) \psi(\eta) \tilde
q_n\Big(\frac{\eta}{|\eta|}\Big)\chi(z)$. The operator $Q_{n}$ is
$\Gamma_x$-invariant by construction. Using the Frobenius isomorphism
of Equation \eqref{eq.Frobenius2}, we extend $Q_{n}$ to the operator
$\widetilde{Q}_n := \Phi (Q_{n})$, which acts on $M = W_x = \Gamma
\times_{\Gamma_x} U_x$ (see also Equation \eqref{eq.induced} with
regards to this isomorphism). Then $\widetilde{Q}_n \in
\Psi^0(M;E)^\Gamma$, that is, it is $\Gamma$-invariant, by
construction, and has principal symbol $\sigma_0(\widetilde{Q}_n) =
\tilde{q}_n$.

Now let $x_0 \in M_0 \cap U_x$, where, we recall, $M_0 :=
M_{(\Gamma_0)}$ denotes the principal orbit bundle. We have
\begin{equation*}
  L^2(W_{x_0};E) \seq \Ind_{\Gamma_0}^\Gamma \big ( L^2(U_{x_0};\beta)
  \big) \seq L^2\big (U_{x_0}; \Ind_{\Gamma_0}^\Gamma (\beta) \big),
\end{equation*}
where $\beta = E_{x_0} = E_x$ by the assumption that $E :=
  \Gamma \times_{\Gamma_x} (U_x \times \beta)$.

Let $\beta_j \in \widehat{\Gamma_0}$ be the isomorphism classes of the
$\Gamma_\xi$-submodules of $\beta$ and $k_j \ge 0$ is the dimension of
the corresponding $\beta_j$-isotypical component in $\beta$, so that
$\beta \simeq \oplus_{j=1}^N \beta_j^{k_j}$, as $\Gamma_0$-modules, as
before. Thus
\begin{equation*}
  L^2(W_{x_0};E) \, \simeq \, \bigoplus_{j = 1}^N L^2(U_{x_0};
  \Ind_{\Gamma_0}^\Gamma (\beta_j^{k_j})) \,.
\end{equation*}

Recall that the $\alpha$-isotypical component of
$\Ind_{\Gamma_0}^\Gamma (\beta_j^{k_j})$ is $\alpha \otimes
\Hom_{\Gamma}(\alpha, \Ind_{\Gamma_0}^\Gamma (\beta_j^{k_j}))$, which
is non-zero if, and only if, $\alpha$ and $\beta_j$ are
$\Gamma_0$-associated, by the Frobenius isomorphism. Hence, passing to
the $\alpha$-isotypical components, we have
\begin{equation} \label{eq.local_isotypical}
  L^2(W_{x_0};E)_\alpha \ede p_{\alpha} L^2(W_{x_0};E) =
  \bigoplus_{j \in J^c} L^2(U_{x_0}; \Ind_{\Gamma_0}^\Gamma
  (\beta_j^{k_j}))_\alpha \, ,
\end{equation}
where $J \subset \{1,\ldots,N\}$ is the set of indices such that
$\beta_j \in \widehat{\Gamma}_0$ and $\alpha$ are $\Gamma_0$-disjoint;
$J^c$ is its complement (i.e.\ $\beta_j \in \widehat{\Gamma}_0$ and
$\alpha$ are $\Gamma_0$-associated).

Let $p_J \in \End(\beta)^{\Gamma_0}$ be the projector onto
$\bigoplus_{j \in J^c} \beta_j^{k_j}$.  Recall that
$\pi_{\zeta,\beta_j}(\tilde{q}_n) = 0$ for any $(\zeta,\beta_j) \in
S^*M_0\times \widehat{\Gamma}_0$ with $j \notin J$. Therefore
$\tilde{q}_n(\zeta)p_J = 0$, for all $\zeta \in S^*M_0$. Since
$S^*M_0$ is dense in $S^*M$, this implies that $\tilde{q}_np_J =
0$. Thus
\begin{equation*}
  \widetilde{Q}_n p_J \seq Op(\chi \psi \tilde q_n \chi ) p_J \seq
  Op(\chi \psi \tilde q_n \chi p_J ) \seq 0 \,.
\end{equation*}
Hence for any $f \in L^2(W_{x_0};E)_{\alpha}$, we have that
$\widetilde{Q}_n f = 0$. This is true for any $x_0 \in M_0$, so we
conclude that $\widetilde{Q}_n$ is zero on $L^2(M_0;E)_\alpha$. Since
$M_0$ has measure zero complement in $M$, we have $L^2(M_0;E)_\alpha =
L^2(M;E)_\alpha$; therefore $\pi_\alpha(\widetilde{Q}_{n}) = 0$. This
implies that $\maR_M(\widetilde{q}_{n}) = 0$, while
$\pi_{\xi,\rho}(\widetilde{q}_{n}) = 1$. Thus $\Gamma(\xi,\rho) \notin
\Xi$, which concludes the proof.
\end{proof}

Our question now is to decide whether some given $\Gamma(\xi, \rho)$
is in $\Xi$ or not. Recall that $\rho$ and $\alpha$ are said to be
$\Gamma_0$-associated if $\Hom_{\Gamma_0}(\rho,\alpha) \neq 0$.  The
set $X_{M,\Gamma}^\alpha$ was defined in the introduction as the set
of pairs $(\xi,\rho)\in T^*M\smallsetminus\{0\}\times\widehat{\Gamma}_\xi$
for which there is an element $g \in \Gamma$ such that $g\cdot \rho$
and $\alpha$ are $\Gamma_0$-associated.

\begin{remark} \label{rem.null_isotypical_component}
Let us highlight the following interesting fact, implied by the proof
of Theorem \ref{theorem.non.principal}. We have that $E_{x\rho} = 0$
for any $(\xi,\rho) \in X^\alpha_{M_0,\Gamma}$ (with $x$ the
projection of $\xi$ on $M_0$) if, and only if, $L^2(M;E)_\alpha = 0$.

Indeed, for any $x\in M_0$ with $\Gamma_x = \Gamma_0$, we have noted
in Equation \eqref{eq.local_isotypical} that
\begin{equation*}
  L^2(W_x;E)_\alpha = \bigoplus_{\rho}
  L^2(U_x;\Ind_{\Gamma_0}^\Gamma(E_{x\rho}))_\alpha,
\end{equation*}
where the direct sum is indexed by the representations $\rho \in
\widehat{\Gamma}_0$ that are $\Gamma_0$-associated to $\alpha$. If $E_{x\rho} =
0$ for any such representation, then $L^2(W_x;E)_\alpha = 0$. Such open sets
$W_x$ cover $M_0$, so $L^2(M_0;E)_\alpha = 0$. Since $M_0$ has measure zero
complement, we conclude that $L^2(M;E)_\alpha= 0$. 
\end{remark}

\begin{proposition} \label{prop.computation.Xi}
We use the notation in the last two paragraphs. We have $\Gamma(\xi,
\rho) \in \Xi$ if, and only if, there is a $g \in \Gamma$ such that
$g\cdot\rho$ and $\alpha$ are $\Gamma_0$-associated.  
\end{proposition}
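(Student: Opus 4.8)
The plan is to prove the two implications separately, using Theorem~\ref{theorem.non.principal} to reduce the question to membership in $\overline{\Xi_0}$, together with the explicit description of $\Xi_0$ from Corollary~\ref{cor.princ.open.set} and the explicit basis of neighborhoods from Remark~\ref{rem.neighborhoods}.

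First I would handle the ``if'' direction. Suppose there is a $g \in \Gamma$ such that $g\cdot \rho$ and $\alpha$ are $\Gamma_0$-associated; note this forces $\Gamma_0 \subset \Gamma_{g\xi}$. Replacing $\xi$ by $g\xi$ (which does not change the orbit $\Gamma(\xi,\rho)$, hence not the primitive ideal), we may assume $\rho$ itself and $\alpha$ are $\Gamma_0$-associated and $\Gamma_0 \subset \Gamma_\xi$. I would then show that every neighborhood $V_{\xi,\rho,n}$ of $\Gamma(\xi,\rho)$ meets $\Xi_0$, which gives $\Gamma(\xi,\rho) \in \overline{\Xi_0} = \Xi$. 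Concretely, pick $\zeta \in S^*M_0$ close to $\xi$ inside the slice $S^*U_x$, lying in the support region where the cutoff $\varphi_n$ equals $1$; choosing $\zeta$ in the dense set $S^*M_0$ is possible since $S^*M_0$ is dense in $S^*M$ (Remark~\ref{rem.principal.bundle.S^*M}). For such $\zeta$, the symbol $\tilde q_n$ restricts over $\zeta$ to (the Frobenius transport of) the projection $p_\rho$, and I must check that $p_\rho$ has nonzero component on the $\Gamma_0$-isotypical pieces $\beta_j$ that are $\Gamma_0$-associated to $\alpha$ — this holds precisely because $\rho$, restricted to $\Gamma_0$, contains such a $\beta_j$, which is exactly the hypothesis that $\rho$ and $\alpha$ are $\Gamma_0$-associated. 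Hence $\pi_{\zeta,\beta_j}(\tilde q_n) \neq 0$ for some $j \in J^c$, so $\Gamma(\zeta,\beta_j) \in V_{\xi,\rho,n} \cap \Xi_0$ by Corollary~\ref{cor.princ.open.set}. Therefore $\Gamma(\xi,\rho) \in \overline{\Xi_0}$.

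The ``only if'' direction is essentially already contained in the proof of Theorem~\ref{theorem.non.principal}: there it is shown that if \emph{no} $g$ makes $g\cdot\rho$ and $\alpha$ be $\Gamma_0$-associated, then one finds $n$ with $V_{\xi,\rho,n} \cap \Xi_0 = \emptyset$, and by quantizing $\tilde q_n$ to an operator $\widetilde Q_n \in B_{W_x}^\Gamma$ one gets $\pi_\alpha(\widetilde Q_n) = 0$ while $\pi_{\xi,\rho}(\tilde q_n) = 1$, so $\Gamma(\xi,\rho) \notin \Xi$. I would simply invoke that argument. (The only subtlety is that the statement of Theorem~\ref{theorem.non.principal} as quoted is about the closure $\overline{\Xi_0}$, while its proof actually establishes the sharper dichotomy in terms of the $\Gamma_0$-association condition; I would cite the proof, not merely the statement.)

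The main obstacle I anticipate is the bookkeeping in the ``if'' direction: one must carefully track how the $\Gamma_\xi$-projection $p_\rho$ behaves under restriction of the representation to the smaller group $\Gamma_0 \subset \Gamma_\xi$, and under the two nested Frobenius isomorphisms $\Phi_{\Gamma_\xi,\Gamma_x}$ and $\Phi_{\Gamma_x,\Gamma}$ used to build $\tilde q_n$, in order to be sure that the value of $\tilde q_n$ at a nearby principal-orbit point $\zeta$ really does have a nonzero block indexed by some $j \in J^c$. This is a purely representation-theoretic computation — essentially transitivity of the $\Gamma_0$-association relation along $\Gamma_0 \subset \Gamma_\xi$ combined with Frobenius reciprocity (Proposition~\ref{prop.res.alpha}) — but it is where the argument could go wrong if the identifications are not set up with care. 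Everything else (density of $S^*M_0$, the neighborhood basis, the description of $\Xi_0$) is available from the earlier results.
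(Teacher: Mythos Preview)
Your approach is correct and is essentially the paper's own: both directions come down to the single computation $\pi_{\eta,\beta_j}(\tilde q_n)=\varphi_n(\eta)\,\pi_{\beta_j}(p_\rho)$, which shows $V_{\xi,\rho,n}\cap\Xi_0\neq\emptyset$ if and only if $\Hom_{\Gamma_0}(\rho,\alpha)\neq 0$, after which one invokes $\Xi=\overline{\Xi_0}$ from Theorem~\ref{theorem.non.principal}.

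One small correction to your ``only if'' paragraph: the proof of Theorem~\ref{theorem.non.principal} does \emph{not} establish the dichotomy in the form you state---it starts from the assumption $V_{\xi,\rho,n}\cap\Xi_0=\emptyset$, not from non-association of $\rho$ and $\alpha$, and the quantization argument there is irrelevant to Proposition~\ref{prop.computation.Xi}. What you actually need is the converse of your own ``if'' computation: if $\rho$ is not $\Gamma_0$-associated to $\alpha$, then $\pi_{\beta_j}(p_\rho)=0$ for every $j\in J^c$, whence $V_{\xi,\rho,n}\cap\Xi_0=\emptyset$ for all $n$ and $\Gamma(\xi,\rho)\notin\overline{\Xi_0}=\Xi$. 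This uses only the \emph{statement} of Theorem~\ref{theorem.non.principal}, and is exactly what the paper does---treating both implications at once via the single equivalence above.
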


\begin{proof}
Let $\Gamma(\xi,\rho) \in \Prim (A_M^\Gamma)$, with $x\in M$ the base
point of $\xi$. We can assume (by choosing a different element in the
orbit if needed) that $\Gamma_0 \subset \Gamma_\xi$. Let $\tilde{q}_n
\in A_M^\Gamma$ be the element defined in Remark
\ref{rem.neighborhoods} and $V_{\xi,\rho,n}$ the corresponding
neighbourhood of $\Gamma(\xi,\rho)$ in $\Prim (A_M^\Gamma)$.

There is a $\Gamma_x$-equivariant isomorphism $E\vert_{U_x} \simeq U_x
\times \beta$, where $\beta = E_x$ is a $\Gamma_x$-module. Since
$\Gamma_0 \subset \Gamma_x$, we may decompose $\beta$ into
$\Gamma_0$-isotypical components, i.e.\ $\beta = \bigoplus_{j=1}^N
\beta_j^{k_j}$, with the usual notation. If $\eta \in \maO_n$, then
$\pi_{\eta,\beta_j}(\tilde{q}_n) =
\varphi_n(\eta)\pi_{\beta_j}(p_\rho)$.  Therefore, for any $\eta \in
S^*M$, we have
\begin{equation*}
    \pi_{\eta,\beta_j}(\tilde{q}_n) = 0 \Leftrightarrow
    \text{$\Hom_{\Gamma_0}(\beta_j,\rho) = 0$ or $\tilde{q}_n(\eta) =
      0$}.
\end{equation*}

This implies that
\begin{equation*}
    V_{\xi,\rho,n} \cap \Xi_0 = \{\Gamma(\eta,\beta) \in \Xi_0 \mid
    \text{$\tilde{q}_n(\eta) \neq 0$ and $\Hom_{\Gamma_0}(\beta,\rho)
      \neq 0$}\}
\end{equation*}
It follows from the determination of $\Xi_0$ in Corollary
\ref{cor.princ.open.set} that $V_{\xi,\rho,n} \cap \Xi_0 \neq
\emptyset$ if, and only if, we have $\Hom_{\Gamma_0}(\rho,\alpha) \neq
0$. Now $\Xi = \overline{\Xi}_0$ by Theorem
\ref{theorem.non.principal}. Since the open sets $(V_{\xi,\rho,n})_{n
  \in \NN}$ form a basis of neighborhoods of $\Gamma(\xi,\rho)$, we
conclude that $\Gamma(\xi,\rho) \in \Xi$ if, and only if, we have
$\Hom_{\Gamma_0}(\rho,\alpha) \neq 0$.
\end{proof}

\begin{remark} \label{rem.Xi=X_M_Gamma}
Our definition of $\alpha$-ellipticity for an operator $P \in
\clop^\Gamma$ was stated in terms of the set $X_{M,\Gamma}^\alpha$,
defined in Equation \eqref{eq.def.Xalpha}.  Proposition
\ref{prop.computation.Xi} establishes that $\Xi \simeq
\tilde{X}^\alpha_{M,\Gamma}/\Gamma$, where
$\tilde{X}^\alpha_{M,\Gamma}$ is the (possibly smaller) subset
of pairs $(\xi,\rho) \in X^\alpha_{M,\Gamma}$ such that $E_{x\rho}
\neq 0$ (with $x$ the projection of $\xi$ on $M$).  Keeping in mind
the fact that the null operator on a trivial vector space is
invertible, we have that $\sigma_0^\Gamma(P)(\xi,\rho)$ is invertible
for any $(\xi,\rho) \in X_{M,\Gamma}^\alpha$ if, and only if, it is
invertible for any $(\xi,\rho) \in \tilde{X}_{M,\Gamma}^\alpha$.  The
pathological case $\Xi = \emptyset$, for which $E_{x\rho} = 0$ for any
$(\xi,\rho) \in X_{M,\Gamma}^\alpha$, causes no problem: indeed, as
noticed in Remark \ref{rem.null_isotypical_component}, we then have
$L^2(M;E)_\alpha = 0$. In that case $\pi_\alpha(P)$ is Fredholm for
any $P \in \clop^\Gamma$, which is consistent with the invertibility
of $\sigma_0^\alpha(P)(\xi,\rho) : 0 \to 0$ for any $(\xi,\rho) \in
X_{M,\Gamma}^\alpha$.
\end{remark}

 We summarize part of the above discussions in the following
  proposition.
\begin{proposition}\label{prop.Xi=X_M_Gamma}
Let $\tilde X^\alpha_{M, \Gamma}$ be as in Remark
\ref{rem.Xi=X_M_Gamma}.  The primitive ideal spectrum $\Xi =
\Prim(A_M^\Gamma/\ker(\maR_M))$ is canonically homeomorphic to $\tilde
X^\alpha_{M, \Gamma}/\Gamma$ via the restriction map from $A_M^\Gamma
:= \maC(S^*M; \End(E))^\Gamma$ to sections over $X^\alpha_{M,
  \Gamma}$.
\end{proposition}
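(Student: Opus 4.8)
The plan is to assemble Proposition \ref{prop.Xi=X_M_Gamma} directly from the results already proved. First I would recall that Proposition \ref{prop.prim} gives a bijection $\Omega_M/\Gamma \to \Prim(A_M^\Gamma)$, sending $\Gamma(\xi,\rho)$ to $\ker(\pi_{\xi,\rho})$, and that this bijection is a homeomorphism for the Jacobson topology (with the explicit basis $V_f$ of Corollary \ref{cor.topology}). Under this identification, the closed subset $\Xi := \Prim(A_M^\Gamma/\ker(\maR_M))$ corresponds to a $\Gamma$-invariant subset of $\Omega_M$, and the content of the proposition is that this subset is exactly $\tilde X^\alpha_{M,\Gamma}$, together with the claim that the restriction map realizes the homeomorphism.

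Next I would invoke Proposition \ref{prop.computation.Xi}, which identifies precisely which $\Gamma(\xi,\rho) \in \Prim(A_M^\Gamma)$ lie in $\Xi$: namely those for which there exists $g \in \Gamma$ with $g\cdot\rho$ and $\alpha$ being $\Gamma_0$-associated. By the very definition of $X^\alpha_{M,\Gamma}$ in Equation \eqref{eq.def.Xalpha}, this condition says exactly that $(\xi,\rho) \in X^\alpha_{M,\Gamma}$. Since $\Prim(A_M^\Gamma)$ only sees pairs $(\xi,\rho) \in \Omega_M$, i.e.\ those with $\Hom_{\Gamma_\xi}(\rho,E_x) \neq 0$ (equivalently $E_{x\rho}\neq 0$), the underlying set of $\Xi$ is $\Omega_M \cap X^\alpha_{M,\Gamma} = \tilde X^\alpha_{M,\Gamma}$, as defined in Remark \ref{rem.Xi=X_M_Gamma}. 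This gives the bijection $\Xi \simeq \tilde X^\alpha_{M,\Gamma}/\Gamma$ at the level of sets.

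It then remains to check two things: that this bijection is a homeomorphism, and that it is induced by the restriction map $A_M^\Gamma = \maC(S^*M;\End(E))^\Gamma \to$ (sections over $X^\alpha_{M,\Gamma}$). For the topology, I would note that $\Xi$ carries the subspace topology from $\Prim(A_M^\Gamma)$, which by Corollary \ref{cor.topology} has basis $\{V_f \cap \Xi\}$ with $V_f = \{\Gamma(\xi,\rho) \mid \pi_{\xi,\rho}(f)\neq 0\}$; transporting along the bijection of Proposition \ref{prop.prim}, these are precisely the sets $\{\Gamma(\xi,\rho) \in \tilde X^\alpha_{M,\Gamma}/\Gamma \mid f(\xi) \text{ does not vanish on } E_{x\rho}\}$, which is the topology pulled back along the restriction map from $A_M^\Gamma$ to functions on $\tilde X^\alpha_{M,\Gamma}$. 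For the ``canonically induced by restriction'' assertion, I would observe that $\ker(\maR_M)$ is, by the ideal-closed-subset correspondence recalled in Subsection \ref{ssec.primitive.spec}, the set of $f \in A_M^\Gamma$ vanishing on $\Xi$; hence $A_M^\Gamma/\ker(\maR_M)$ is naturally the algebra of restrictions of such $f$ to $\tilde X^\alpha_{M,\Gamma}$, and $\Prim$ of this quotient is $\tilde X^\alpha_{M,\Gamma}/\Gamma$ by the same homeomorphism argument.

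The only genuinely delicate point is bookkeeping around the degenerate/pathological cases and making sure the identification of $\Xi$ with a subset of $\Omega_M$ (rather than all of $X^\alpha_{M,\Gamma}$) is handled cleanly: one must remember that $\Prim(A_M^\Gamma)$ never contains pairs with $E_{x\rho}=0$, so the honest statement uses $\tilde X^\alpha_{M,\Gamma}$, and that when $\tilde X^\alpha_{M,\Gamma} = \emptyset$ both sides are empty (consistent with Remark \ref{rem.null_isotypical_component}). Beyond that, everything is a direct translation of Propositions \ref{prop.prim} and \ref{prop.computation.Xi} through the dictionary between closed ideals and closed subsets of the primitive spectrum, so the proof should be short.
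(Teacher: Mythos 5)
Your proof is correct and follows the same route as the paper, which gives no separate argument but explicitly presents the proposition as a summary of Proposition \ref{prop.computation.Xi}, Remark \ref{rem.Xi=X_M_Gamma}, Proposition \ref{prop.prim}, Corollary \ref{cor.topology}, and the ideal/closed-set dictionary from Subsection \ref{ssec.primitive.spec}. Your reconstruction of this chain is accurate, including the care taken over the $\tilde X^\alpha_{M,\Gamma}$ versus $X^\alpha_{M,\Gamma}$ distinction and the degenerate case $\Xi=\emptyset$.
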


\section{Applications and extensions}
\label{sec.applications}

We now prove the main result of the paper, Theorem \ref{thm.main1}, on
the characterization of Fredholm operators, and discuss some
extensions of our results. We first explain how to reduce the proof to
the case $M/\Gamma$ connected and we discuss in more detail the
$\Gamma$-principal symbol and $\alpha$-ellipticity (this discussion
can be skipped at a first lecture).

\subsection{Reduction to the connected case and $\alpha$-ellipticity}
\label{ssec.disconnected}
In this subsection, unlike most of the rest of the paper, we {\em do
  \underline{not} assume that $M/\Gamma$ is connected} in order to
explain how to reduce the general case to the connected one. We do
assume however, as always, that $M$ is compact. We also provide some
other reductions of our proof.

Let $\pi_{M, \Gamma} : M \to M/\Gamma$ be the quotient map and let us
write then $M/\Gamma = \cup_{i=1}^N C_i$ as the {\em disjoint} union
of its connected components. We let $M_i := \pi_{M, \Gamma}^{-1}(C_i)$
be the preimages of these connected components. Note that, in
general, the submanifolds $M_i$ are not connected, but, for each $i$,
$M_i/\Gamma = C_i$ {\em is connected}. In particular, a similar
discussion applies to yield the definition of the space
\begin{equation}\label{eq.def.Xa.gen}
  X^\alpha_{M, \Gamma} \ede \sqcup_{i=1}^N X^\alpha_{M_i, \Gamma}\,
\end{equation}
as a disjoint union of the spaces $X^\alpha_{M_i, \Gamma}$, which
makes sense since each of the spaces $M_i$ is invariant for $\Gamma$
and $M_i/\Gamma$ is connected. (See Equation \eqref{eq.def.Xalpha} of the
Introduction for the definition of the spaces $X^\alpha_{M_i,
  \Gamma}$.)

We shall decorate with the index $i$ the restrictions of objects on
$M$ to $M_i$. Thus, $E_i := E\vert_{M_i}$, and so on and so
forth. This almost works for an operator $P \in \clop^\Gamma$. Indeed,
we first notice that
\begin{equation}\label{eq.dir.sum}
  L^2(M; E) \, \simeq \, \oplus_{i=1}^N L^2(M_i; E_i) \quad \mbox{ and
  } \quad
  \oplus_{i=1}^N \overline{\psi^{0}}(M_i; E_i) \, \subset \, \clop\,.
\end{equation}

Recall that $\maK(V)$ denotes the algebra of compact operators on a
Hilbert space $V$. The following proposition provides the desired
reduction to the connected case.

\begin{proposition}\label{prop.reduction}
Let $p_i : L^2(M; E) \to L^2(M_i; E_i)$ be the canonical orthoghonal
projection. For $P \in \clop$, we let $P_i := p_i P p_i \in
\overline{\psi^{0}}(M_i; E_i)$. Then $P - \sum_{i=1}^N P_i \in
\maK(L^2(M; E))$. If we regard $\sum_{i=1}^N P_i = \oplus_{i=1}^N P_i
$ as an element of $\oplus_{i=1}^N \overline{\psi^{0}}(M_i; E_i)$,
then we see that
\begin{equation*}
   \clop \seq \oplus_{i=1}^N \overline{\psi^{0}}(M_i; E_i) +
   \maK(L^2(M; E)) \,.
\end{equation*}
Moreover, $\pi_\alpha (P) - \oplus_{i=1}^N \pi_\alpha(P_i)$ is compact
and hence $\pi_\alpha (P)$ is Fredholm if, and only if, each
$\pi_\alpha(P_i)$ is Fredholm, for $i = 1,\ldots,N$.
\end{proposition}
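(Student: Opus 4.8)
The plan is to prove the three claims in order, each reducing to a statement about compactly supported pseudodifferential operators on a compact manifold. First I would establish that $P - \sum_{i=1}^N P_i$ is compact. The key observation is that the Schwartz kernel $K_P$ of $P$ is a distribution on $M \times M$ that is smooth off the diagonal; the operator $\sum_i P_i = \sum_i p_i P p_i$ has kernel $\sum_i (p_i \otimes p_i) K_P$, i.e.\ the restriction of $K_P$ to the ``diagonal blocks'' $M_i \times M_i$. Hence $P - \sum_i P_i$ has kernel $\sum_{i \neq j} (p_i \otimes p_j) K_P$, supported in the off-diagonal blocks $M_i \times M_j$ with $i \neq j$. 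Since $M_i$ and $M_j$ are unions of distinct connected components of $M$ (as $M_i/\Gamma = C_i$ are the distinct connected components of $M/\Gamma$), they are \emph{disjoint}, in fact at positive distance, so these blocks do not meet the diagonal of $M \times M$. Therefore the kernel of $P - \sum_i P_i$ is smooth (indeed $\maC^\infty$ on $M \times M$), and since $M$ is compact it defines a smoothing, hence compact, operator; in fact $P - \sum_i P_i \in \psi^{-\infty}(M;E) \subset \maK(L^2(M;E))$. One should also check $P_i \in \overline{\psi^0}(M_i;E_i)$: away from the diagonal $K_{P_i}$ is smooth, and near the diagonal (which lies inside some $M_i \times M_i$, since the diagonal of $M$ is the disjoint union of the diagonals of the $M_i$) the operator $P_i$ agrees with $P$ cut off by $p_i$, which is a classical order-zero operator on $M_i$; taking norm closures gives the claim.

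The second claim, the decomposition $\clop = \oplus_i \overline{\psi^0}(M_i;E_i) + \maK(L^2(M;E))$, then follows immediately: the inclusion ``$\supset$'' is \eqref{eq.dir.sum} together with $\maK \subset \clop$ (compacts are limits of smoothing operators), and ``$\subset$'' is exactly the first claim, since every $P \in \clop$ is a norm limit of elements of $\op{0}$ and the decomposition $P = \sum_i P_i + (P - \sum_i P_i)$ passes to the closure (the map $P \mapsto (P_i)_i$ and $P \mapsto P - \sum P_i$ are norm-continuous, being compressions by projections). Here one uses that $\oplus_i \overline{\psi^0}(M_i;E_i) + \maK(L^2(M;E))$ is norm-closed, which holds because it is the preimage of the closed subalgebra $\oplus_i A_{M_i}^{\emptyset}\text{-part}$... more simply, because a sum of a closed subalgebra and a closed ideal in a $C^*$-algebra is closed.

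For the last claim I would apply $\pi_\alpha$, which is a $*$-homomorphism on $\clop^\Gamma$ (Equation \eqref{eq.restriction2}). Since $\Gamma$ preserves each $M_i$, each $p_i$ is $\Gamma$-equivariant and commutes with $p_\alpha$; thus $\pi_\alpha(P_i)$ makes sense and $\pi_\alpha(P) - \oplus_{i=1}^N \pi_\alpha(P_i) = \pi_\alpha(P - \sum_i P_i)$. As $P - \sum_i P_i$ is compact and $\Gamma$-invariant, Proposition \ref{prop.image} (or simply the fact that $\pi_\alpha$, being compression to a closed subspace, maps compacts to compacts) gives that $\pi_\alpha(P - \sum_i P_i) \in \maK(L^2(M;E)_\alpha)$. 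Finally, $\oplus_i \pi_\alpha(P_i)$ acts block-diagonally on $L^2(M;E)_\alpha = \oplus_i L^2(M_i;E_i)_\alpha$, so it is Fredholm if and only if each summand $\pi_\alpha(P_i)$ is Fredholm; and Fredholmness is stable under compact perturbation, so $\pi_\alpha(P)$ is Fredholm iff $\oplus_i \pi_\alpha(P_i)$ is. Combining the two equivalences gives the statement.

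The only mildly delicate point is the first one: one must be careful that ``$P$ has compactly supported kernel'' together with ``$M_i, M_j$ disjoint for $i \neq j$'' genuinely forces the off-diagonal blocks of $K_P$ to be smooth, which is just the pseudolocality (microlocal smoothing off the diagonal) of classical pseudodifferential operators. I expect this to be the main obstacle only in the sense of needing to phrase it cleanly; there is no real difficulty, since the components $C_i$ of $M/\Gamma$ are clopen, hence $M_i$ are clopen in $M$ and pairwise disjoint, so $\overline{M_i} \cap \overline{M_j} = \emptyset$ for $i \neq j$ and the diagonal of $M \times M$ is contained in $\bigcup_i M_i \times M_i$. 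Everything else is standard $C^*$-algebra bookkeeping and Fredholm theory.
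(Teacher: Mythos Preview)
Your proposal is correct and follows essentially the same route as the paper. The only difference is cosmetic: to show $p_iPp_j$ is compact for $i\neq j$, the paper observes directly that its principal symbol vanishes (since $\chi_{M_i}\sigma_0(P)\chi_{M_j}=0$) and invokes the exact sequence of Corollary~\ref{cor.full.symbol}, which works immediately for all $P\in\clop$ without first treating $P\in\psi^0$ and then passing to the norm closure as you do via the Schwartz-kernel argument.
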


\begin{proof}
If $i \neq j$, $p_iPp_j$ has zero principal symbol, and hence it is
compact. Therefore $P - \sum_{i=1}^N P_i = \sum_{i \neq j} p_iPp_j$ is
compact. The rest follows from Equation \eqref{eq.dir.sum}, its
corollary $L^2(M; E)_\alpha \, \simeq \, \oplus_{i=1}^N L^2(M_i;
E_i)_\alpha$, and the fact that $\pi_\alpha$ respects these direct sum
decompositions.
\end{proof}

Recall the algebras $A_{\maO}$ of symbols of the previous section,
see Equation \eqref{eq.def.ABZ}, where $\maO$ is an open subset of
$M$.

\begin{remark}\label{rem.symbol}
The $\Gamma$-principal symbol $\sigma^\Gamma_m(P)$ was defined in
\eqref{eq.def.Gamma.symb}, and we stress that the definition of the
space $X_{M,\Gamma}$ did not require that $M/\Gamma$ be connected. The
disjoint union definition of the space $X_{M,\Gamma} = \sqcup_{i=1}^N
X_{M_i, \Gamma}$ means that
\begin{equation*}
  \sigma_m^\Gamma(P)|_{X_{M_i,\Gamma}} = \sigma_m^\Gamma(P_i)
\end{equation*}
for each $i = 1,\ldots,N$. The analogous disjoint union decomposition
of $X^\alpha_{M, \Gamma} \ede \sqcup_{i=1}^N X^\alpha_{M_i, \Gamma}$
gives that $P$ is $\alpha$-elliptic if, and only if, for each $i$,
$P_i$ is $\alpha$-elliptic.

This allows us to reduce the proof of our main theorem, Theorem
\ref{thm.main1} to the connected case since, assuming that the connected case
has been proved, we have
\begin{align*}
  \pi_\alpha(P) \text{ is Fredholm} & \Leftrightarrow \forall i, \pi_\alpha(P_i)
  \text{ is Fredholm} \\
                                    & \Leftrightarrow \forall i, P_i \text{ is
                                    $\alpha$-elliptic} \\
                                    & \Leftrightarrow P \text{ is
                                    $\alpha$-elliptic},
\end{align*}
where the first equivalence is by Proposition \ref{prop.reduction},
the second equivalence is by the assumption that our main theorem has
been proved in the connected case, and the last equivalence is by the
first part of this remark.
\end{remark}
\color{black}

We now {\em resume our assumption that $M/\Gamma$ is connected,} for
convenience.  In particular, $\Gamma_0$ will be a minimal isotropy
group, which is unique up to conjugacy (since we are again assuming
that $M/\Gamma$ is connected). We shall take a closer look next at the
$\Gamma$- and $\alpha$-principal symbols, so the following simple
discussion will be useful. Recall that if $K \subset \Gamma$, $\rho
\in \widehat{\Gamma}$, and $g \in \Gamma$, then $g\cdot K := g K
g^{-1}$ and $(g \cdot \rho)(\gamma) := \rho( g^{-1} \gamma g)$, so
that $g \cdot \rho$ is an irreducible representation of $g \cdot K$
(i.e. $g \cdot \rho \in \widehat{g \cdot K}$).

\begin{remark}\label{rem.alpha.elliptic}
  Let $\xi \in T^*M \smallsetminus \{0\}$ and $\rho \in \widehat{\Gamma}_\xi$
(that is, $(\xi, \rho) \in X_{M, \Gamma}$). Then the following three
statements are equivalent:
\begin{enumerate}[(i)]
  \item the pair $(\xi, \rho) \in X^\alpha_{M, \Gamma}$;
\item there is $g \in \Gamma$ such that $\Gamma_0 \subset g \cdot
  \Gamma_\xi = \Gamma_{g\xi}$ and such that $g\cdot \rho$ and
  $\alpha$ are $\Gamma_0$-associated;

\item There is $\gamma \in \Gamma$ such that $\gamma \cdot \Gamma_0
  \subset \Gamma_\xi$ and $\Hom_{\gamma \cdot \Gamma_0}(\rho, \alpha)
  \neq 0$.
\end{enumerate}

Indeed, if (i) is satisfied, then the definition of $X^\alpha_{M,
  \Gamma}$ (see Equation \eqref{eq.def.Xalpha} and Definition
\ref{def.associated}) is equivalent to the existence of $g$, i.e.\ (i)
$\Leftrightarrow$ (ii). Recalling that $g\cdot \rho \in
\widehat{\Gamma_{g \xi}}$, we stress then that we need $\Gamma_0
\subset g \cdot \Gamma_\xi = \Gamma_{g\xi}$ for $\alpha$ and $g \cdot
\rho$ to be associated.

To prove (ii) $\Leftrightarrow$ (iii), let $\gamma = g^{-1}$.  We have
$\Gamma_0 \subset g \cdot \Gamma_\xi$ and $\Hom_{\Gamma_0}(g \cdot
\rho, \alpha) \neq 0$ if, and only if, $\gamma \cdot \Gamma_0 \subset
\Gamma_\xi$ and $\Hom_{\gamma \cdot \Gamma_0}(\rho, \gamma \cdot
\alpha) \neq 0$. The result follows from the fact that $\alpha$ and
$\gamma \cdot \alpha$ are equivalent (since $\gamma \in \Gamma$ and
$\alpha$ is a representation of $\Gamma$).
\end{remark}

We include next below, in Proposition
\ref{prop:fixed_points_reformulation}, a reformulation of our
$\alpha$-ellipticity condition in terms of the fixed point manifold
$S^*M^{\Gamma_0}$, with $\Gamma_0$ a minimal isotropy subgroup as
before. This result was suggested by some discussions with
P.-E.\ Paradan, whom we thank for his useful input.

In the following, $\operatorname{Stab}_\Gamma(M)$ will denote the set
of stabilizer subgroups $K$ of $\Gamma$, that is, the set of subgroups
$K \subset \Gamma$ such that there is $m\in M$ with $K=\Gamma_m$. It
is a finite set, since $\Gamma$ is finite. Similarly, we let
\begin{equation*}
 \operatorname{Stab}_\Gamma^{\Gamma_0}(M):=\{K\in
\operatorname{Stab}_\Gamma(M) \mid \Gamma_0 \subset K\}.
\end{equation*}
Note that $\operatorname{Stab}_\Gamma(T^*M) =
\operatorname{Stab}_\Gamma(M)$. Recall also that $(T^*M)^K =
T^*(M^K)$, where $M^K$ is the submanifold of fixed points of $M$ by
$K$, as usual.  For a $\Gamma$--space $X$ and $K \subset \Gamma$ a
subgroup, we shall let $X_{K} := \{x \in X \, \vert \ \Gamma_x = K\}
\subset X^K$ denote the set of points of $X$ with isotropy $K$. Note
that, in general, $T^*(M_{K}) \neq (T^*M)_{(K)}$.

\begin{lemma} \label{lm:stabilizer_manifold}
The set $M_K := \{m \in M \, \vert \ \Gamma_m=K\}$ is a submanifold.
\end{lemma}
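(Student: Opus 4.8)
The plan is to show that $M_K$ is locally diffeomorphic to an open subset of a fixed linear subspace, using the slice theorem recalled in Subsection~\ref{ssec.slice}. The key observation is that the condition ``$\Gamma_m = K$'' (exact equality, not just conjugacy) is much more rigid than the condition defining $M_{(K)}$, and in fact it cuts out an \emph{open} subset of the fixed-point submanifold $M^K$.

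First I would fix $x \in M_K$, so that $\Gamma_x = K$, and invoke the tube theorem: there is a $\Gamma$-equivariant diffeomorphism $W_x \simeq \Gamma \times_{\Gamma_x} U_x = \Gamma \times_K U_x$, where $U_x = (T_xM)_r$ is a $K$-invariant ball in $T_xM$ on which $K$ acts linearly and isometrically (Equation~\eqref{eq.def.tube}). Since the exponential map is $\Gamma$-equivariant, for a vector $v \in U_x$ the isotropy group of the corresponding point $\exp_x(v) \in W_x$ equals the isotropy group of $v$ under the linear $K$-action, i.e.\ $\Gamma_{\exp_x v} = K_v := \{k \in K \mid kv = v\}$. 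Hence, inside the tube $W_x$, the set $M_K \cap W_x$ corresponds under the diffeomorphism to $\{v \in U_x \mid K_v = K\}$. But $K_v = K$ means precisely that $v$ is fixed by all of $K$, i.e.\ $v \in (T_xM)^K$. Therefore $M_K \cap W_x \simeq U_x \cap (T_xM)^K = \big((T_xM)^K\big)_r$, which is an open ball in the linear subspace $(T_xM)^K$ of $T_xM$, hence a smooth submanifold chart.

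Next I would check that these local charts are compatible and that $M_K$ is locally closed so that the submanifold structure is well-defined: the transition maps are restrictions of the smooth transition maps of the ambient manifold $M$, and $M_K = M^K \smallsetminus \bigcup_{K' \supsetneq K} M^{K'}$ expresses $M_K$ as the complement in the closed submanifold $M^K$ of a closed subset (a finite union, since $\Gamma$ is finite, of the closed submanifolds $M^{K'}$ for the finitely many subgroups $K' \subsetneq \Gamma$ strictly containing $K$), hence $M_K$ is open in $M^K$. An open subset of a submanifold is a submanifold, which gives the claim; alternatively one can argue directly from the charts above that $M_K$ is an embedded submanifold of $M$ (of dimension $\dim (T_xM)^K$ on the component through $x$).

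The main obstacle, such as it is, is purely one of bookkeeping: one must be careful that the tube isomorphism $W_x \simeq \Gamma \times_K U_x$ really does translate the global isotropy condition $\Gamma_y = K$ into the linear isotropy condition $K_v = K$ on the slice $U_x$ --- in particular that no new isotropy appears from the $\Gamma/K$-factor, which is immediate because points of $W_x$ coming from $(1, v) \in \Gamma \times_K U_x$ have isotropy contained in $K$ (the stabilizer of a tube point is conjugate into $\Gamma_x$, and for the representative $(1,v)$ it is exactly $K_v \subset K$). Once this translation is in place the rest is standard slice-theorem manipulation, and the dimension of $M_K$ near $x$ is simply $\dim (T_xM)^K$. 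Note that $M_K$ need not be connected, and its components may have different dimensions if $M/\Gamma$ is disconnected, but this does not affect the submanifold conclusion.
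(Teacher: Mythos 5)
Your argument is correct and is in the same spirit as the paper's (both use the tube/slice theorem at $x$ with $\Gamma_x = K$), but there is one bookkeeping slip and one genuine difference worth pointing out. The slip: you write that ``$M_K \cap W_x$ corresponds to $\{v \in U_x \mid K_v = K\}$,'' but the full tube $W_x \simeq \Gamma\times_K U_x$ contains the other cosets $[\gamma, v]$, whose isotropy is $\gamma K_v \gamma^{-1}$; so $M_K \cap W_x$ corresponds to $\{[\gamma,v] \mid \gamma K_v\gamma^{-1} = K\}$, not just the identity-coset slice. The paper handles this directly and finds $M_K \cap W_x \simeq N(K)\times_K V^K$. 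Your version is nonetheless rescued by the finiteness of $\Gamma$: since $\Gamma/K$ is finite and discrete, the identity-coset piece $\exp_x(U_x) = \{[1,v]\}$ is itself an open neighborhood of $x$ in $M$, so it suffices to show $M_K \cap \exp_x(U_x) = \exp_x(U_x^K)$, which is exactly your computation. This lets you sidestep the normalizer, at the cost of using discreteness of $\Gamma$ more explicitly than the paper does (the paper's $N(K)\times_K V^K$ description would survive for a compact Lie group). Your second argument, that
\begin{equation*}
M_K = M^K \smallsetminus \bigcup_{K' \supsetneq K} M^{K'}
\end{equation*}
is open in the closed submanifold $M^K$ (finite union of closed sets being removed), is a genuinely different and cleaner route, provided one grants the standard fact that fixed-point sets $M^{K'}$ are closed submanifolds; it is arguably the most economical proof, and it also works without invoking the tube at all.
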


\begin{proof}
Let $x \in M_K$, that is, $\Gamma_x = K$. The problem is local, so,
using, \cite[Proposition 5.13]{tomDieckTransBook}, we see that it
suffices to consider the case $M=\Gamma\times_K V$, where $V$ is a
$K$-representation. Then, if $z = (\gamma, y) \in \Gamma \times_K V$,
we have $\Gamma_z = \gamma K_y \gamma^{-1}$ and hence, if $\Gamma_z =
K$, we obtain $K = \gamma K_y \gamma^{-1}$, which, in turn, gives $K_y
= K$ and $\gamma \in N(K) := \{g \in \Gamma\,\vert \ g K g^{-1} = K \,
\}$. We thus obtain that
\begin{equation*}
  M_K \seq \{ (\gamma, y) \in \Gamma \times_K M \, \vert \ K_y =
  \gamma^{-1} K \gamma \, \} \seq N(K) \times_K V^K\,,
\end{equation*}
which is a submanifold of $M$.
\end{proof}

Let $K \subset \Gamma$ be a subgroup and $\rho \in \widehat{K}$. Then
$E_\rho \ede \bigsqcup_{x \in M^K} E_{x\rho}$ is a smooth vector
bundle over $M^K$, the set of fixed points of $M$ with respect to
$K$. Similarly, $(E \otimes \rho)^K \to M^K$ is a smooth vector bundle
(over $M^K$). Moreover, we have an isomorphism
\begin{equation}\label{eq.iso.K}
  \End(E_\rho)^K \simeq \End( (E\otimes \rho)^K \otimes \rho)^K \simeq
  \End((E\otimes \rho)^K),
\end{equation}
of vector bundles over $M^K$, where the last isomorphism comes from
the fact that $\End(\rho)^K = \CC$.  In view of this discussion, we
choose to state the following result in terms of the vector bundle
$(E\otimes \rho)^K$ over $M^K$ rather than in terms of $E_\rho$. This
discussion shows also that it is enough in our proofs to assume that
$\alpha$ is the trivial (one-dimensional) representation.

\begin{proposition} \label{prop:fixed_points_reformulation}
Let $\alpha \in \widehat{\Gamma}$ and $P \in \psi^m(M;E)$, for some $m
\in \RR$. Recall the vector bundle $(M \otimes \rho)^K \to M^K \supset
M_K$. The following are equivalent:
\begin{enumerate}
\item $P$ is $\alpha$-elliptic (Definition \ref{def.chi.ps}).
\item For all $K \in \operatorname{Stab}_\Gamma^{\Gamma_0}(M)$ and all
  $\rho \in \widehat{K}$ that are $\Gamma_0$-associated with $\alpha$,
  we have that $(\sigma_m(P) \otimes id_\rho) \vert_{(E\otimes
    \rho)^{K}}$ defines by restriction an invertible element of
\begin{equation*}
     \maC^\infty \big((T^*M \smallsetminus \{ 0\} )_K, \End ((E\otimes
     \rho)^K)\big) \,.
\end{equation*}

\item The principal symbol $(\sigma_m(P) \otimes
  id_\alpha)|_{(E\otimes\alpha)^{\Gamma_0}}$ defines by restriction an
  invertible element in
      \begin{equation*}
        \maC^\infty (T^*M^{\Gamma_0} \smallsetminus \{ 0\}
        ; \End((E\otimes \alpha)^{\Gamma_0})\,.
      \end{equation*}
\end{enumerate}  
\end{proposition}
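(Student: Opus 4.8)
The plan is to prove the cycle of implications $(1) \Rightarrow (2) \Rightarrow (3) \Rightarrow (1)$, using the discussion preceding the statement to reduce bookkeeping. First I would record the key translation: by Remark~\ref{rem.alpha.elliptic}, the condition $(\xi,\rho) \in X^\alpha_{M,\Gamma}$ is equivalent to the existence of $\gamma \in \Gamma$ with $\gamma \cdot \Gamma_0 \subset \Gamma_\xi$ and $\Hom_{\gamma\cdot\Gamma_0}(\rho,\alpha)\neq 0$; and by the isomorphism \eqref{eq.iso.K}, $\sigma_m(P)(\xi)$ restricted to the $\rho$-isotypical component of $E_x$ is invertible if and only if $(\sigma_m(P)(\xi)\otimes \mathrm{id}_\rho)|_{(E\otimes\rho)^K}$ is invertible, where $K = \Gamma_\xi \supset \Gamma_0$. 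Thus $\alpha$-ellipticity of $P$ — i.e.\ invertibility of $\sigma_m^\Gamma(P)$ on $X^\alpha_{M,\Gamma}$ — becomes the statement that for every $\xi \in T^*M\smallsetminus\{0\}$ and every $\rho \in \widehat{\Gamma}_\xi$ with $\Hom_{\Gamma_0'}(\rho,\alpha)\neq 0$ for some conjugate $\Gamma_0'$ of $\Gamma_0$ contained in $\Gamma_\xi$, the corresponding restricted symbol is invertible. The $\Gamma$-invariance of $\sigma_m(P)$ lets us conjugate freely, so we may always arrange $\Gamma_0 \subset \Gamma_\xi$, i.e.\ $\xi \in (T^*M)^{\Gamma_0}$.

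For $(1)\Rightarrow(2)$: fix $K \in \operatorname{Stab}_\Gamma^{\Gamma_0}(M)$ and $\rho\in\widehat{K}$ with $\Hom_{\Gamma_0}(\rho,\alpha)\neq 0$, and fix $\xi \in (T^*M\smallsetminus\{0\})_K$, so $\Gamma_\xi = K$. Then $(\xi,\rho)\in X_{M,\Gamma}$, and since $\Gamma_0\subset K = \Gamma_\xi$ with $\rho$ and $\alpha$ $\Gamma_0$-associated, $(\xi,\rho)\in X^\alpha_{M,\Gamma}$ by Remark~\ref{rem.alpha.elliptic}(iii) (take $\gamma = 1$). Hence $\sigma_m^\Gamma(P)(\xi,\rho)$ is invertible, which under \eqref{eq.iso.K} says exactly that $(\sigma_m(P)\otimes\mathrm{id}_\rho)|_{(E\otimes\rho)^K}$ is invertible at $\xi$; as $\xi$ ranges over $(T^*M\smallsetminus\{0\})_K$ we get the claim, smoothness being automatic since $\sigma_m(P)$ is smooth and the restriction to a submanifold of a smooth subbundle is smooth.

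For $(2)\Rightarrow(3)$: this is the specialization $K = \Gamma_0$, $\rho = \mathrm{Res}^\Gamma_{\Gamma_0}\alpha$ restricted to its isotypical pieces — more precisely, one decomposes $\mathrm{Res}^\Gamma_{\Gamma_0}\alpha$ and notes that $(E\otimes\alpha)^{\Gamma_0}$ is the direct sum of the bundles $(E\otimes\rho)^{\Gamma_0}$ over the irreducibles $\rho$ of $\Gamma_0$ that are $\Gamma_0$-associated with $\alpha$ (equivalently, appear in $\mathrm{Res}\,\alpha$), together with the observation $(T^*M)_{\Gamma_0} \subset (T^*M)^{\Gamma_0} = T^*M^{\Gamma_0}$ is dense with complement of measure zero, so invertibility of a continuous symbol on the dense open stratum $(T^*M\smallsetminus\{0\})_{\Gamma_0}$ does \emph{not} immediately give it on all of $T^*M^{\Gamma_0}\smallsetminus\{0\}$ — this is the step requiring care. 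The fix is to observe that a point $\xi \in T^*M^{\Gamma_0}\smallsetminus\{0\}$ has $\Gamma_\xi = K \supseteq \Gamma_0$ for some stabilizer $K$, and then apply hypothesis $(2)$ with that $K$ and with each $\rho' \in \widehat K$ whose restriction to $\Gamma_0$ contains a constituent $\Gamma_0$-associated to $\alpha$; decomposing $(E\otimes\alpha)^{\Gamma_0}_\xi$ over the $\Gamma_0$-isotypical components and then refining over $\widehat K$, invertibility at $\xi$ follows componentwise from $(2)$. Thus $(3)$ holds on the closed stratum too, not just on $(T^*M)_{\Gamma_0}$.

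For $(3)\Rightarrow(1)$: take any $(\xi,\rho)\in X^\alpha_{M,\Gamma}$; conjugating, assume $\Gamma_0\subset\Gamma_\xi$ and $\rho,\alpha$ are $\Gamma_0$-associated, so $\xi \in T^*M^{\Gamma_0}\smallsetminus\{0\}$. We must show $\sigma_m^\Gamma(P)(\xi,\rho) = \pi_\rho(\sigma_m(P)(\xi))$ is invertible. Now $\sigma_m(P)(\xi) \in \End(E_x)^{\Gamma_\xi}$ commutes with the $\Gamma_0$-action on $E_x$, hence preserves each $\Gamma_0$-isotypical component $E_{x,\rho'}$, and $(3)$ says precisely that $\sigma_m(P)(\xi)$ is invertible on $\bigoplus_{\rho'} E_{x,\rho'}$, the sum over $\rho'\in\widehat{\Gamma_0}$ appearing in $\mathrm{Res}\,\alpha$ — in particular on $E_{x,\rho}$, since $\rho$ (restricted to $\Gamma_0$) meets $\mathrm{Res}\,\alpha$. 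Invertibility of $\sigma_m(P)(\xi)$ on $E_{x,\rho}$, combined with $\Gamma_\xi$-equivariance and the fact that the $\Gamma_\xi$-isotypical piece $E_{x\rho}$ sits inside $E_{x,\rho}$ and is $\sigma_m(P)(\xi)$-invariant, gives invertibility of $\pi_\rho(\sigma_m(P)(\xi))$. (If $E_{x\rho} = 0$ this is the trivial invertible map $0\to 0$, as noted in Definition~\ref{def.chi.ps}.) Hence $P$ is $\alpha$-elliptic.

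The main obstacle is the passage from the open principal stratum to the full fixed-point manifold in $(2)\Leftrightarrow(3)$ and in the reformulation of $\alpha$-ellipticity: one must be careful that an operator which is invertible on every $\Gamma_0$-isotypical component matching $\alpha$, over the dense stratum, remains invertible over the lower strata, where $\Gamma_\xi$ is strictly larger and the $\Gamma_\xi$-structure must be reconciled with the $\Gamma_0$-structure. This is handled by working componentwise with respect to the finite decomposition of $E_x$ into $\Gamma_0$-isotypical components and using that $\sigma_m(P)$ is a continuous (indeed smooth) section defined on all of $T^*M\smallsetminus\{0\}$, so its restriction to the closed submanifold $T^*M^{\Gamma_0}\smallsetminus\{0\}$ is well-defined without any density argument — the density of $(T^*M)_{\Gamma_0}$ is only needed to match with the set $X^\alpha_{M,\Gamma}$ and with Proposition~\ref{prop.computation.Xi}, not to define the restricted symbol.
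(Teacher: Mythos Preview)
Your implication $(1)\Rightarrow(2)$ matches the paper's argument. The overall strategy, however, differs: the paper proves $(1)\Leftrightarrow(2)$ by a direct symmetry argument and then proves $(1)\Leftrightarrow(3)$ using the $C^*$-algebraic machinery developed earlier --- specifically the density of $\Xi_0$ in $\Xi$ (Theorem~\ref{theorem.non.principal}) to see that the restriction map $A_M^\Gamma/\ker(\maR_M)\to \maC(T^*M^{\Gamma_0}\smallsetminus\{0\};\End((E\otimes\alpha)^{\Gamma_0}))$ is injective, so invertibility transfers. Your cycle $(2)\Rightarrow(3)\Rightarrow(1)$ is a more elementary route that bypasses this density result entirely; that is a genuine advantage, if it works.

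But there is a gap in your $(3)\Rightarrow(1)$. You write that ``the $\Gamma_\xi$-isotypical piece $E_{x\rho}$ sits inside $E_{x,\rho}$'', meaning inside a single $\Gamma_0$-isotypical component. This is false whenever $\rho\vert_{\Gamma_0}$ is reducible. For instance, take $\Gamma_\xi=S_3$, $\Gamma_0=\langle(12)\rangle$, and $\rho=V$ the standard $2$-dimensional representation: then $V\vert_{\Gamma_0}$ is the sum of the trivial and sign characters, so $E_{x\rho}$ straddles two $\Gamma_0$-isotypical components. Hypothesis~$(3)$ gives you invertibility only on those $\Gamma_0$-components associated to $\alpha$, so you cannot immediately conclude invertibility on all of $E_{x\rho}$.

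The repair uses $\Gamma_\xi$-equivariance in a sharper way than you state. Since $\sigma_m(P)(\xi)\vert_{E_{x\rho}}$ lies in $\End_{\Gamma_\xi}(E_{x\rho})\simeq M_k(\CC)$ with $k=\dim\Hom_{\Gamma_\xi}(\rho,E_x)$, it has the form $A\otimes\mathrm{id}_\rho$ for some $A\in M_k(\CC)$. Restricting to the $\rho'$-isotypical piece $(E_{x\rho})_{\rho'}$ for any $\rho'\in\widehat{\Gamma_0}$ with $\rho'\subset\rho\vert_{\Gamma_0}$ gives $A\otimes\mathrm{id}$ on a nonzero space, which is invertible iff $A$ is. Now pick $\rho'$ common to $\rho\vert_{\Gamma_0}$ and $\alpha\vert_{\Gamma_0}$; then $(E_{x\rho})_{\rho'}\subset E_{x,\rho'}$ is an invariant subspace on which $(3)$ guarantees invertibility, forcing $A$ invertible, hence $\sigma_m(P)(\xi)\vert_{E_{x\rho}}$ invertible. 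Your $(2)\Rightarrow(3)$ sketch is essentially correct once unpacked along the same lines.
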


Recall that for representations $\alpha$ and $\beta$ to be
$H$-associated, they have to be defined, after restriction, on
$H$. See Definition \ref{def.associated}.

\begin{proof}
Recall that $P$ is $\alpha$-elliptic if the restriction of
$\sigma_m^\Gamma(P)$ to $X^\alpha_{M, \Gamma}$ is invertible (see
Remark \ref{rem.alpha.elliptic} for a detailed definition and
discussion of the space $X^\alpha_{M, \Gamma}$ appearing in the
definition of $\alpha$-ellipticity).

Let $K \in \operatorname{Stab}_{\Gamma}^{\Gamma_0}(M)$ (so $\Gamma_0
\subset K$), $\rho \in \widehat{K}$, and $\xi \in T_x^*M
\smallsetminus\{0\}$ with $\Gamma_\xi = K$. We have that $(\sigma_m(P)
\otimes id_\rho)\vert_{(E \otimes \rho)^K}$ is invertible at $\xi \in
(T^*M)_K$ if, and only if, the restriction of $\sigma_m(P)(\xi)$ to
$E_{x \rho}$ is invertible, since they correspond to each other under
the isomorphism of Equation \eqref{eq.iso.K}. The relation (2) thus
means that the restriction of the principal symbol $\sigma_m(P)$ is
invertible on a {\em subset} of $X^{\alpha}_{M, \Gamma}$, so (1)
implies (2) right away.

Let us show next that (2) implies (1), let $\xi \in T^*M\smallsetminus
\{0\} $ and let $K':=\Gamma_\xi$. By definition, $\xi$ belongs to
$(T^*M)_{K'}$. Assume that $(\xi, \rho) \in X^{\alpha}_{M,
  \Gamma}$. This means that there exists $g \in \Gamma $ such that
$\rho' := g \cdot \rho$ and $\alpha$ are $\Gamma_0$ associated (see
Equation \eqref{eq.def.Xalpha} and Definition \ref{def.associated};
alternatively, this is also recalled in Remark
\ref{rem.alpha.elliptic}). For this to make sense, it is implicit that
\begin{equation*}
   \Gamma_0 \subset \Gamma_{g \xi} \seq g\cdot \Gamma_{\xi} \seq g
   \cdot K' \, =:\, K
\end{equation*}
(again, see Remark \ref{rem.alpha.elliptic}). Then $g : (T^*M)_{K'}
\rightarrow (T^*M)_K$ is a diffeomorphism. Condition (2) for the group
$K$ gives that $\pi_{g \xi, \rho'}(\sigma_m(P))$ is invertible, since
the irreducible representation $\rho'$ of $\Gamma_{g\xi}$ is
$\Gamma_0$-associated to $\alpha$ (we have used here again the
isomorphism \eqref{eq.iso.K}).  Furthermore, $g : E_{\xi, \rho} \to
E_{g \xi, \rho'}$ is an isomorphism. Now, by the $\Gamma$-invariance
of $\sigma := \sigma_m(P)$, we have
$(g^{-1}\sigma)(\xi)=g^{-1}(\sigma(g\xi))g=\sigma(\xi)$ therefore
$\pi_{\xi,\rho'}(\sigma)$ is invertible if, and only if,
$\pi_{g\xi,\rho}(\sigma)$ is.

For the equivalence of (1) and (3), we can assume that $m=0$.  Recall
first that the density of $\Xi_0$ in $\Xi$ established in Theorem
\ref{theorem.non.principal} gives that the family of representations
\begin{equation*}
   \maF_{0} \ede \{\pi_{\xi,\rho} \, \mid\ (\xi,\rho) \in
   X^\alpha_{M,\Gamma}, \, \Gamma_\xi = \Gamma_0 \}
\end{equation*}
is faithful for the $C^*$-algebra $A_M^\Gamma/\ker (\maR_M)$ (see
e.g.\ \cite[Theorem 5.1]{Roc03}). In other words, the restriction
morphism
\begin{equation*}
  A_M^\Gamma/\ker (\maR_M) \to \bigoplus_{\substack{\rho \in
      \widehat{\Gamma}_0, \\ \rho \subset \alpha|_{\Gamma_0}}}
  \maC\big((T^*M \smallsetminus \{ 0\} )_{\Gamma_0}, \End
  (E_\rho)^{\Gamma_0}\big)
\end{equation*}
is injective. Since $(T^*M)_{\Gamma_0}$ is dense in $T^*M^{\Gamma_0}$,
it follows that the restriction morphism
\begin{equation*}
   R_M : A_M^\Gamma/\ker (\maR_M) \to \bigoplus_{\substack{\rho \in
       \widehat{\Gamma}_0, \\ \rho \subset \alpha|_{\Gamma_0}}}
   \maC\big(T^*M^{\Gamma_0} \smallsetminus \{ 0\} , \End
   (E_\rho)^{\Gamma_0}\big)
\end{equation*}
is also injective.

Let us write $\alpha_{|\Gamma_0} = \bigoplus_{\rho \in
  \widehat{\Gamma}_0} m_\rho \rho$, with multiplicities $m_\rho \ge
0$. {By considering the representations $\rho$ with $m_\rho > 0$, we
  see that} there is an injective vector bundle morphism over the
manifold $M^{\Gamma_0}$ defined by
\begin{equation}
  \label{eq.isomorphism_Etimesalpha}
  \Psi: \, \bigoplus_{\substack{\rho \in \widehat{\Gamma}_0, \\ \rho
      \subset \alpha|_{\Gamma_0}}} \End(E_\rho)^{\Gamma_0}
  \simeq \bigoplus_{\substack{\rho \in \widehat{\Gamma}_0, \\ \rho
      \subset \alpha|_{\Gamma_0}}} \End((E\otimes\rho)^{\Gamma_0})
  \hookrightarrow \End((E\otimes\alpha)^{\Gamma_0}) \, ,
\end{equation}
where the last morphism maps any element $T
\in \End((E\otimes\rho)^{\Gamma_0})$ to a direct sum of copies of $T$
acting on the direct summand
$\big[(E\otimes\rho)^{\Gamma_0}\big]^{m_\rho} \subset (E \otimes
\alpha)^{\Gamma_0}$.

Condition (3) amounts to the fact that
\begin{equation*}
   \Psi(R_M(\sigma^\Gamma_0(P))) \in \maC^\infty (T^*M^{\Gamma_0}
   \smallsetminus \{ 0\} ; \End((E\otimes \alpha)^{\Gamma_0})
\end{equation*}
is invertible. To establish that (1) $\Leftrightarrow$ (3), we thus
need to prove that $P$ is $\alpha$-elliptic if, and only if,
$\Psi(R_M(\sigma^\Gamma_0(P)))$ is invertible.

Recall the definition of the symbol algebras $A_M$ from Equation
\eqref{eq.def.ABZ}. We have that $P \in \clop$ is $\alpha$-elliptic
if, and only if, the image of $\sigma_0^\Gamma(P)$ in the quotient
algebra $A_M^\Gamma/\ker (\maR_M)$ is invertible (by the determination
of $\ker(\maR_M)$ in Remark \ref{rem.Xi=X_M_Gamma} or Proposition
\ref{prop.Xi=X_M_Gamma}). But since both $\Psi$ and $R_M$ are
injective, $\Psi \circ R_M$ is injective on $A_M^\Gamma/\ker
(\maR_M)$. Thus $\sigma_0^\Gamma(P)$ is invertible in the quotient
algebra $A_M^\Gamma/\ker (\maR_M)$ if, and only if,
$\Psi(R_M(\sigma^\Gamma_0(P)))$ is invertible. As we have seen above,
this amounts to (1) $\Leftrightarrow$ (3).
\end{proof}

\subsection{Fredholm conditions and Hodge and index theory}
We continue to assume that $M$ is a compact smooth manifold. We
have the following $\Gamma$--equivariant version of Atkinson's
theorem. (Recall that $\maK(V)$ denotes the algebra of compact
  operators acting on the Hibert space $V$.)

\begin{proposition} \label{prop.Atkinson}
Let  $V$ be a Hilbert
  space with a unitary action of $\Gamma$ and $P\in \maL(V)^\Gamma$
be a $\Gamma$--equivariant bounded operator on $V$. We have that $P$
is Fredholm if, and only if, it is invertible modulo $\maK(V)^\Gamma$,
in which case, we can choose the parametrix (i.e.\ the inverse modulo
the compacts) to also be $\Gamma$-invariant.
\end{proposition}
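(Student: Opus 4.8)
The plan is to deduce this from the classical (non-equivariant) Atkinson theorem by an averaging argument over the finite group $\Gamma$; the argument is standard, so I will mainly indicate the steps. One implication is immediate: if $P$ is invertible modulo $\maK(V)^\Gamma$, then, since $\maK(V)^\Gamma \subset \maK(V)$, it is \emph{a fortiori} invertible modulo $\maK(V)$, and hence $P$ is Fredholm by the usual Atkinson theorem. So the content is in the converse, together with the assertion that the parametrix may be chosen $\Gamma$-invariant.

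For the converse, assume $P \in \maL(V)^\Gamma$ is Fredholm. By the classical Atkinson theorem there exist $Q \in \maL(V)$ (not necessarily $\Gamma$-equivariant) and compact operators $K_1, K_2 \in \maK(V)$ with $PQ \seq I + K_1$ and $QP \seq I + K_2$. Since the action of $\Gamma$ on $V$ is unitary, conjugation $T \mapsto g T g^{-1}$ preserves $\maL(V)$, preserves $\maK(V)$, and is isometric. I would then set
\begin{equation*}
  \widetilde{Q} \ede \frac{1}{|\Gamma|} \sum_{g \in \Gamma} g Q g^{-1} \, \in \, \maL(V)^\Gamma \,,
\end{equation*}
which is $\Gamma$-invariant (reindexing $g \mapsto hg$ shows $h \widetilde{Q} h^{-1} = \widetilde{Q}$). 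Using that $g^{-1} P g = P$ for all $g \in \Gamma$, because $P$ is $\Gamma$-equivariant, one computes
\begin{equation*}
  P \widetilde{Q} \seq \frac{1}{|\Gamma|} \sum_{g \in \Gamma} g \, (g^{-1} P g) \, Q \, g^{-1} \seq \frac{1}{|\Gamma|} \sum_{g \in \Gamma} g \, (PQ) \, g^{-1} \seq I + \frac{1}{|\Gamma|} \sum_{g \in \Gamma} g K_1 g^{-1} \,,
\end{equation*}
and the last sum is a finite sum of compact operators, hence compact, and $\Gamma$-invariant by construction; call it $K_1' \in \maK(V)^\Gamma$. The identical computation starting from $QP = I + K_2$ gives $\widetilde{Q} P = I + K_2'$ with $K_2' \in \maK(V)^\Gamma$. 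Thus $\widetilde{Q}$ is the desired $\Gamma$-invariant parametrix, proving that $P$ is invertible modulo $\maK(V)^\Gamma$, with an invariant inverse.

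Since $\Gamma$ is finite, there is essentially no obstacle: the only points to verify are the elementary facts that conjugation by the unitaries $g$ preserves compactness and that a finite average of compact operators is compact. (If one wanted the analogous statement for a compact, non-discrete group, one would replace the average by integration against normalized Haar measure and would have to check that the resulting Bochner integral of the norm-continuous family $g \mapsto g K_1 g^{-1}$ of compact operators is again compact; this is true but requires slightly more care. For the finite group relevant to this paper no such issue arises.)
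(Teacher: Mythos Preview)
Your proof is correct and is the standard averaging argument one expects here. The paper itself does not give a proof but simply refers to \cite[Proposition 5.1]{BCLN1}; your argument is almost certainly the one found there, so there is nothing to compare.
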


\begin{proof}
See for example \cite[Proposition 5.1]{BCLN1}.
\end{proof}

Since $\pi_\alpha(\maK(L^2(M; E))^\Gamma) = \maK(L^2(M;
E)_\alpha)^{\Gamma}$, we obtain the following corollary.

\begin{corollary} \label{cor.Atkinson}
Let $P \in \clop^\Gamma$ and $\alpha \in \widehat{\Gamma}$.  We have
that $\pi_\alpha(P)$ is Fredholm on $L^2(M; E)_\alpha$ if, and only
if, $\pi_\alpha(P)$ is invertible modulo $\pi_\alpha(\maK(L^2(M;
E))^\Gamma)$ in $\pi_\alpha(\clop^\Gamma)$.
\end{corollary}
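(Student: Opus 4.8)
The plan is to derive this directly from the $\Gamma$-equivariant Atkinson theorem (Proposition~\ref{prop.Atkinson}) combined with the identification $\pi_\alpha(\maK(L^2(M;E))^\Gamma) = \maK(L^2(M;E)_\alpha)^\Gamma$ from Proposition~\ref{prop.image}. Write $V \ede L^2(M;E)_\alpha$, $\maB \ede \pi_\alpha(\clop^\Gamma)$, and $\mfkJ \ede \pi_\alpha(\maK(L^2(M;E))^\Gamma)$. Since $\pi_\alpha$ is a $*$-morphism of $C^*$-algebras (so its image is closed), $\maB$ is a unital $C^*$-subalgebra of $\maL(V)$; and since $\clopn^\Gamma = \maK(L^2(M;E))^\Gamma$ is a closed two-sided ideal of $\clop^\Gamma$, the set $\mfkJ$ is a closed two-sided ideal of $\maB$, which by Proposition~\ref{prop.image} equals $\maK(V)^\Gamma$, itself a closed two-sided ideal of $\maL(V)^\Gamma$. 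Finally, $\pi_\alpha(P) \in \maL(V)^\Gamma$ because $P$ is $\Gamma$-invariant. Hence $\maB/\mfkJ$ is a unital $C^*$-subalgebra of $\maL(V)^\Gamma/\mfkJ$ with the same unit, and the assertion to be proved becomes: $\pi_\alpha(P)$ is Fredholm on $V$ if, and only if, its class $\pi_\alpha(P) + \mfkJ$ is invertible in $\maB/\mfkJ$.

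The ``if'' direction is immediate: an inverse of $\pi_\alpha(P) + \mfkJ$ in $\maB/\mfkJ$ is the class of some $\pi_\alpha(Q)$ with $Q \in \clop^\Gamma$, and then $\pi_\alpha(Q)\pi_\alpha(P) - 1$ and $\pi_\alpha(P)\pi_\alpha(Q) - 1$ lie in $\mfkJ \subset \maK(V)$, so $\pi_\alpha(P)$ is invertible modulo the compacts and hence Fredholm (by the classical Atkinson theorem, or again by Proposition~\ref{prop.Atkinson}). For the ``only if'' direction, apply Proposition~\ref{prop.Atkinson} to the Hilbert space $V$ with its unitary $\Gamma$-action: if $\pi_\alpha(P) \in \maL(V)^\Gamma$ is Fredholm, there is a $\Gamma$-invariant parametrix $Q_0 \in \maL(V)^\Gamma$, i.e.\ $\pi_\alpha(P) + \mfkJ$ is invertible in $\maL(V)^\Gamma/\mfkJ$. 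Invertibility then descends to the $C^*$-subalgebra $\maB/\mfkJ$ by spectral permanence for $C^*$-algebras (an element of a unital $C^*$-subalgebra has the same spectrum whether computed in the subalgebra or in the ambient algebra; see \cite{Dixmier}), so that $\pi_\alpha(P) + \mfkJ$ is in fact invertible in $\maB/\mfkJ$.

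The step I expect to require the most care is exactly this last one. Proposition~\ref{prop.Atkinson} by itself only produces a parametrix inside $\maL(V)^\Gamma$, not one inside the smaller algebra $\pi_\alpha(\clop^\Gamma)$, and one cannot in general upgrade it to a pseudodifferential parametrix, since $P$ is \emph{not} assumed elliptic here — indeed, the whole point of the paper is that $\pi_\alpha(P)$ can be Fredholm without $P$ being elliptic. The spectral-permanence argument above is precisely what legitimates the formulation ``invertible modulo $\pi_\alpha(\maK(L^2(M;E))^\Gamma)$ \emph{in} $\pi_\alpha(\clop^\Gamma)$'' and lets one pass freely between parametrices in $\maL(V)^\Gamma$ and in $\pi_\alpha(\clop^\Gamma)$.
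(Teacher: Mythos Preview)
Your proof is correct and follows the same approach the paper intends: combine Proposition~\ref{prop.Atkinson} with the identification $\pi_\alpha(\maK(L^2(M;E))^\Gamma) = \maK(L^2(M;E)_\alpha)^\Gamma$ of Proposition~\ref{prop.image}. The paper in fact gives no formal proof beyond the one-line remark preceding the corollary, so your write-up is strictly more detailed; in particular, your use of spectral permanence to pass from invertibility in $\maL(V)^\Gamma/\mfkJ$ to invertibility in the smaller quotient $\pi_\alpha(\clop^\Gamma)/\mfkJ$ makes explicit a step the paper leaves to the reader.
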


We are now in a position to prove the main result of this paper,
Theorem \ref{thm.main1}.

\begin{proof}[Proof ot Theorem \ref{thm.main1}]
As in \cite[Section 2.6]{BCLN1}, we may assume that $P \in
\clop^\Gamma$. Corollary \ref{cor.Atkinson} then states that
$\pi_\alpha(P)$ is Fredholm if, and only if, the image of its symbol
$\sigma(P)$ is invertible in the quotient algebra
\begin{equation*}
    \maR_M(A_M^\Gamma) \seq \pi_\alpha(\clop^\Gamma)/
    \pi_\alpha(\maK(L^2(M; E) )^\Gamma).
\end{equation*}
According to Proposition \ref{prop.computation.Xi} and Remark
\ref{rem.Xi=X_M_Gamma} following it, the primitive spectrum $\Xi$ of
$\maR_M(A_M^\Gamma)$ identifies with $X_{M,\Gamma}^\alpha$.  Therefore
$\maR_M(\sigma(P))$ is invertible if, and only if, the endomorphism
$\pi_{\xi,\rho}(\sigma(P))$ is invertible for all $(\xi,\rho) \in
X_{M,\Gamma}^\alpha$, i.e.\ if, and only if, $P$ is $\alpha$-elliptic
(see Definition \ref{def.chi.ps}).
\end{proof}

\begin{remark}
Let $P : H^s(M; E) \to H^{s-m}(M; E)$ be an order $m$, classical
pseudodifferential operator. Since the index of Fredholm operators is
invariant under small perturbations and under compact perturbations,
we obtain that the index of $\pi_\alpha(P)$ depends only on the
homotopy class of its $\alpha$-principal symbol $\sigma_m^\alpha(P)$.
\end{remark}

An alternative approach to the Fredholm property (Theorem
\ref{thm.main1}) can be obtained from the following theorem. Recall
that $X^{\alpha}_{M, \Gamma}$ was defined in
\eqref{eq.def.Xalpha}. Below, by $\pa$ we shall denote the connecting
morphism in the six-term $K$-theory exact sequence associated to
  a short exact sequence of $C^*$-algebras. Recall that
$\sigma_0^\alpha$ is the $\alpha$-principal symbol map, see Definition
\ref{def.chi.ps}.

\begin{theorem}\label{thm.index}
Let $X^\alpha_{M,\Gamma}(E) := X^\alpha_{M,\Gamma}\cap \Omega_M$ and
let us denote by $\maC(X^\alpha_{M, \Gamma}(E)/\Gamma)$ the algebra of
restrictions of $A_M^\Gamma := \maC(S^*M; \End(E))^\Gamma$ to
$X^\alpha_{M, \Gamma}(E)/\Gamma$. Using the notation of Corollary
\ref{cor.Atkinson}, we have an exact sequence
\begin{equation*}
   0 \to \maK \to \pi_{\alpha}(\clop^\Gamma)
   \stackrel{\sigma_0^\alpha}{\longrightarrow} \maC(X^\alpha_{M,
     \Gamma}(E)/\Gamma) \to 0 \,.
\end{equation*}
Let $\pa : K_1(\maC(X^\alpha_{M, \Gamma}(E)/\Gamma)) \to \ZZ \simeq
K_0(\maK)$ be the associated connecting morphism and let $P \in
\clop^\Gamma$ be such that $\pi_{\alpha}(P)$ is Fredholm. Then
\begin{equation*}
  \ind(\pi_\alpha(P)) \seq \dim (\alpha) \pa [\sigma_0^\alpha(P)]\,.
\end{equation*}
\end{theorem}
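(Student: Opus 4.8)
The plan is to establish the short exact sequence first, then identify the connecting map. For the exact sequence, I would combine Corollary \ref{cor.full.symbol} with the analysis of the previous section. Applying $\pi_\alpha$ to the exact sequence $0 \to \maK(L^2(M;E))^\Gamma \to \clop^\Gamma \to A_M^\Gamma \to 0$ and using Proposition \ref{prop.image} to identify $\pi_\alpha(\maK(L^2(M;E))^\Gamma) = \maK(L^2(M;E)_\alpha)^\Gamma =: \maK$, we obtain $\pi_\alpha(\clop^\Gamma)/\maK \simeq \maR_M(A_M^\Gamma) = A_M^\Gamma/\ker(\maR_M)$. By Proposition \ref{prop.Xi=X_M_Gamma} (together with Remark \ref{rem.Xi=X_M_Gamma}), the quotient $A_M^\Gamma/\ker(\maR_M)$ is the algebra of restrictions of $A_M^\Gamma$ to $\tilde X^\alpha_{M,\Gamma}/\Gamma$; since $\tilde X^\alpha_{M,\Gamma} = X^\alpha_{M,\Gamma}(E)$ by definition, this is exactly $\maC(X^\alpha_{M,\Gamma}(E)/\Gamma)$, and the quotient map is the $\alpha$-principal symbol $\sigma_0^\alpha$. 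This gives the asserted short exact sequence, and the associated six-term $K$-theory sequence furnishes the connecting morphism $\pa$.

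Next I would address the index formula. The key point is that $\maK = \maK(L^2(M;E)_\alpha)^\Gamma$ is \emph{not} the full algebra of compacts on $L^2(M;E)_\alpha$, but its $\Gamma$-invariant part; since $L^2(M;E)_\alpha \simeq \alpha \otimes \Hom_\Gamma(\alpha, L^2(M;E))$ as $\Gamma$-modules, we have $\maK(L^2(M;E)_\alpha)^\Gamma \simeq \maK(\Hom_\Gamma(\alpha, L^2(M;E)))$, a full compact algebra on the multiplicity space. For $P \in \clop^\Gamma$ with $\pi_\alpha(P)$ Fredholm, Proposition \ref{prop.Atkinson} (equivariant Atkinson) shows $\pi_\alpha(P)$ is invertible modulo $\maK$, so $[\pi_\alpha(P)]$ determines a class in $K_1$ of the quotient mapping under $\pa$ to $K_0(\maK) \simeq \ZZ$. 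I would then show that $\pa[\sigma_0^\alpha(P)]$ computed via this sequence equals the Fredholm index of $\pi_\alpha(P)$ \emph{as an operator on the multiplicity space} $\Hom_\Gamma(\alpha, L^2(M;E))$ — this is the standard fact that the $K$-theory boundary map of $0 \to \maK(\maH) \to \maL(\maH) \supset \{\text{invertible mod }\maK\} \to (\text{quotient})$ computes the Fredholm index, specialized to $\maH = \Hom_\Gamma(\alpha, L^2(M;E))$. Finally, since $L^2(M;E)_\alpha \simeq \alpha \otimes \Hom_\Gamma(\alpha, L^2(M;E))$ and $\pi_\alpha(P)$ acts as $\mathrm{id}_\alpha \otimes (\text{operator on the multiplicity space})$, its kernel and cokernel are $\dim(\alpha)$ copies of those of the multiplicity-space operator; hence $\ind(\pi_\alpha(P)) = \dim(\alpha)\,\pa[\sigma_0^\alpha(P)]$.

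The main obstacle is the bookkeeping in the last step: one must be careful that the invariant compacts $\maK(L^2(M;E)_\alpha)^\Gamma$ really form a \emph{full} compact algebra (so that $K_0(\maK) \simeq \ZZ$ with the standard generator, making $\pa$ land in $\ZZ$ as claimed) and that the identification with $\maK(\Hom_\Gamma(\alpha, L^2(M;E)))$ is the one under which the boundary map gives the ordinary Fredholm index. One should also verify that $\pi_\alpha(P)$ genuinely decomposes as $\mathrm{id}_\alpha \otimes T$ for a single Fredholm operator $T$ on the multiplicity space — this uses that $\pi_\alpha(P)$ is $\Gamma$-equivariant on $L^2(M;E)_\alpha$, so by Schur's lemma it respects the tensor decomposition $\alpha \otimes \Hom_\Gamma(\alpha, L^2(M;E))$ — and that the factor $\dim(\alpha)$ thereby appears exactly once. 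Everything else (exactness, naturality of the six-term sequence, the $C(X/\Gamma)$-module structure) is formal given the results already established in Sections \ref{sec.Preliminaries} and \ref{sec.principal.symbol}.
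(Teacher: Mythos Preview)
Your proposal is correct and follows essentially the same approach as the paper: establish the exact sequence from the earlier structural results (Corollary \ref{cor.full.symbol}, Proposition \ref{prop.image}, and Proposition \ref{prop.Xi=X_M_Gamma}), then obtain the factor $\dim(\alpha)$ by observing that $\maK(L^2(M;E)_\alpha)^\Gamma$ is a full compact algebra whose defining representation on $L^2(M;E)_\alpha$ has multiplicity $\dim(\alpha)$, so that the $K$-theory boundary map (which computes the Fredholm index in the standard representation) differs from $\ind(\pi_\alpha(P))$ by exactly that factor. Your treatment is in fact more explicit than the paper's, which simply invokes the isomorphism $\maK(L^2(M;E)_\alpha)^\Gamma \simeq \maK$ and the multiplicity-$\dim(\alpha)$ representation without spelling out the tensor decomposition $L^2(M;E)_\alpha \simeq \alpha \otimes \Hom_\Gamma(\alpha, L^2(M;E))$ or the form $\mathrm{id}_\alpha \otimes T$ of $\pi_\alpha(P)$.
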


\begin{proof}
The exactness of the sequence follows from the proof of Corollary
\ref{cor.Atkinson} and the fact that $\maK(L^2(M; E)_\alpha)^\Gamma
\simeq \maK$, the algebra of compact operators on a model separable
Hilbert space $\maH$. Under this isomorphism, the resulting
representation of $\maK$ on $L^2(M; E)_\alpha$ is isomorphic to
$\dim(\alpha)$ times the standard representation of $\maK$ on $\maH$.
This justifies the factor $\dim(\alpha)$. The rest follows from the
fact that the index is the connecting morphism in $K$-theory for the
Calkin exact sequence. See \cite{HigherDocumenta} for more details.
\end{proof}

\begin{remark}
As in \cite{HigherDocumenta}, it follows that the index of
$\pi_{\alpha}(P)$ with $P \in \psi^0(M;E)^\Gamma$ is the pairing
between a cyclic cocycle $\phi$ on $\maC^\infty(X_{M, \Gamma})$ (the
algebra of principal symbols of operators in $\psi^0(M;E)^\Gamma$) and
the $K$-theory class of the $\alpha$-principal symbol of $P$
\cite{ConnesBook}. See also \cite{gayral, KaroubiBook, LodayQuillen,
  LeschMosPflaum, ManinBook, Tsygan}. Lemma \ref{lemma.local2} gives that
the restriction of this cyclic cocycle to the principal orbit bundle
is the usual Atiyah-Singer cocycle (i.e.\ the cocycle that yields the
Atiyah-Singer index theorem in cyclic homology \cite{CMLocal, LMP,
  HigherDocumenta, Perrot}, which thus corresponds, after suitable
rescaling, to the Todd class). The full determination of the class of
the index cyclic cocycle $\phi$ require, however, a non-trivial use of
cyclic homology, since the quotient algebra $\maC^\infty(X_{M,
  \Gamma})$ is non-commutative, in general.
\end{remark}

\begin{remark}\label{rem.Hodge}
As for the case of compact complex varieties \cite{GriffithHarrisBook,
  WellsBook}, we can consider complexes of operators \cite{BL92} and
the corresponding notion of $\alpha$-ellipticity. In particular, we
obtain the finiteness of the corresponding cohomology groups if the
complex is $\alpha$-elliptic. This is related to the Hodge theory of
singular spaces \cite{AlbinHodge, MazzeoHodge, Brion, BL93,
  CheegerHodge, Teleman80}.
\end{remark}

\subsection{Special cases}
We now specialize our main result to some particular cases.

\subsubsection{The abelian group case \cite{BCLN1}}
Many statements and definitions become easier in the case of abelian
groups. In particular, if $\Gamma_i$, $i=1,2$, are both abelian, then
the irreducible representations $\alpha_i \in \widehat{\Gamma}_i$ are
characters, that is, morphisms $\alpha_i : \Gamma_i \to \CC^*$, and we
have that they are $H$-associated for some subgroup $H$ if, and only
if, $\alpha_1\vert_{H} = \alpha_2\vert_{H}$.

Let $\alpha$ be an irreducible representation of $\Gamma$. When
$\Gamma$ is abelian, the conjugacy class of isotropy subgroups
corresponding to the principal orbit type of the action has only one
element, namely $\Gamma_0$.  In that case, the set
$X_{M,\Gamma}^\alpha$ defined in Equation \eqref{eq.def.Xalpha} of the
introduction has the simpler expression:
\begin{equation*}
  X_{M,\Gamma}^\alpha = \{(\xi, \rho) \mid \, \xi \in
    T^*M\smallsetminus \{0\},\, \rho \in \widehat{\Gamma}_\xi,
    \rho\vert_{\Gamma_0} = \alpha\vert_{\Gamma_0}\, \}.
\end{equation*}

As a consequence, it is easier to check the $\alpha$-ellipticity for
an operator $P$ in the abelian case. Let $E,F$ be $\Gamma$-equivariant
vector bundles over $M$ and set $\alpha_0 :=
\alpha\vert_{\Gamma_0}$. We then recover the main result of
\cite{BCLN1}. Indeed, Theorem \ref{thm.main1} can then be stated as
follows:

\begin{theorem}\cite[Theorem 1.2]{BCLN1} \label{thm.abelian_case}
Let $\Gamma$ be a finite, \emph{abelian} group acting on a smooth,
compact manifold $M$ and let $P \in \psi^m(M; E, F)^\Gamma$.  Then,
for any $s \in \RR$, the following are equivalent:
\begin{enumerate}
  \item the operator $\pi_\alpha(P) : H^s(M; E)_\alpha \, \to \,
    H^{s-m}(M; F)_\alpha$ is Fredholm,
  \item for all $(x,\xi) \in T^*M\smallsetminus\{0\}$, $\rho \in
    \widehat{\Gamma}_\xi$ such that $\rho_{|\Gamma_0}=\alpha_0$, the
    restriction of $\sigma(P)(x,\xi)$ defines an isomorphism
\begin{equation*}
   \pi_{\rho}(\sigma(P)(x,\xi)) : E_{x\rho} \to F_{x\rho}\,.
\end{equation*}
\end{enumerate} 
\end{theorem}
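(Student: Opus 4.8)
The plan is to deduce Theorem \ref{thm.abelian_case} directly from the general Theorem \ref{thm.main1}, so that the only real work is to rewrite the $\alpha$-ellipticity condition of Definition \ref{def.chi.ps} in the concrete form given by condition (2). First I would observe that the property of being $\alpha$-elliptic does not involve $s$, so it suffices to invoke Theorem \ref{thm.main1} for the given value of $s$ (and for the bundles $E,F$ and the order $m$, which are allowed to be arbitrary there). Thus condition (1) is equivalent to the invertibility of $\sigma_m^\alpha(P) = \sigma_m^\Gamma(P)|_{X^\alpha_{M,\Gamma}}$ everywhere on its domain of definition, and it remains only to make the set $X^\alpha_{M,\Gamma}$ and this restriction completely explicit when $\Gamma$ is abelian.

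The key simplification is that for an abelian $\Gamma$ the action on stabilizer data is trivial: for $g \in \Gamma$ and $\xi \in T^*M$ one has $\Gamma_{g\xi} = g\Gamma_\xi g^{-1} = \Gamma_\xi$, and for $\rho \in \widehat{\Gamma}_\xi$ the twist $g\cdot\rho$ defined by $(g\cdot\rho)(h) = \rho(g^{-1}hg)$ equals $\rho$. Consequently, in the definition \eqref{eq.def.Xalpha} of $X^\alpha_{M,\Gamma}$ the quantifier over $g \in \Gamma$ is vacuous, and $X^\alpha_{M,\Gamma}$ reduces to the set of pairs $(\xi,\rho)$ with $\xi \in T^*M\smallsetminus\{0\}$, $\rho \in \widehat{\Gamma}_\xi$, and $\Hom_{\Gamma_0}(\rho,\alpha)\neq 0$. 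Here $\Gamma_0$ is the (now literally unique, not merely unique up to conjugacy) minimal isotropy subgroup, and by the reduction of Subsection \ref{ssec.disconnected} I would assume $M/\Gamma$ connected, so this costs nothing. Finally, since $\rho$ and $\alpha$ are one-dimensional characters, $\Hom_{\Gamma_0}(\rho,\alpha)\neq 0$ is equivalent to $\rho|_{\Gamma_0} = \alpha|_{\Gamma_0} =: \alpha_0$, which recovers the simpler description of $X^\alpha_{M,\Gamma}$ recorded just before the theorem.

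With this identification in hand, the restriction $\sigma_m^\Gamma(P)(\xi,\rho) = \pi_\rho(\sigma_m(P)(\xi))$ is exactly the map $\pi_\rho(\sigma(P)(x,\xi)) : E_{x\rho} \to F_{x\rho}$ appearing in condition (2), with $x$ the basepoint of $\xi$. So $\alpha$-ellipticity of $P$ means precisely that each such map is an isomorphism as $(x,\xi)$ ranges over $T^*M\smallsetminus\{0\}$ and $\rho$ over the characters of $\Gamma_\xi$ with $\rho|_{\Gamma_0} = \alpha_0$ — including the degenerate case $E_{x\rho} = 0$, where the zero map is invertible by the convention in Definition \ref{def.chi.ps}. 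Combining this with Theorem \ref{thm.main1} gives (1) $\Leftrightarrow$ (2).

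I do not expect a genuine obstacle here: the entire analytic content is in Theorem \ref{thm.main1}, and what remains is an unwinding of definitions together with the elementary fact that characters of an abelian group are invariant under inner twisting. The only point requiring a little care is the bookkeeping around $\Gamma_0$ (genuine uniqueness versus uniqueness up to conjugacy) and the reduction to the connected case, both already handled by the material developed in Subsections \ref{ssec.principal} and \ref{ssec.disconnected}.
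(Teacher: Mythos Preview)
Your proposal is correct and mirrors the paper's own treatment: the paper does not give a separate formal proof but presents Theorem \ref{thm.abelian_case} as an immediate restatement of Theorem \ref{thm.main1} after observing (in the paragraph preceding it) that for abelian $\Gamma$ the conjugation action on stabilizers and representations is trivial, so $X^\alpha_{M,\Gamma}$ simplifies to $\{(\xi,\rho)\mid \rho|_{\Gamma_0}=\alpha|_{\Gamma_0}\}$. Your unwinding of the definitions, including the remark that $\Hom_{\Gamma_0}(\rho,\alpha)\neq 0$ reduces to equality of characters on $\Gamma_0$, is exactly the reasoning the paper sketches.
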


\subsubsection{Scalar operators} 
Our main theorem becomes quite explicit when we are dealing with
scalar operators, i.e.\ when the vector bundles $E_i= M\times
\mathbb{C}$, where $\mathbb{C}$ denotes the trivial representation of
$\Gamma$.

\begin{proposition}
Let $P : H^s(M)\rightarrow H^{s-m}(M)$ be a $\Gamma$-invariant
pseudodifferential operator. Let $\alpha\in \widehat{\Gamma}$. Then
$P$ is $\alpha$-elliptic if, and only if, $\sigma(P)(\xi)$ is
invertible for all $\xi\in T^*M\smallsetminus \{0\}$ such that $\alpha$ is
$\Gamma_0$-associated to the trivial (constant 1) representation of
$\Gamma_\xi$. 
\end{proposition}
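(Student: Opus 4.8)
The plan is to specialise Definition~\ref{def.chi.ps}, in the reformulated form of Remark~\ref{rem.Xi=X_M_Gamma}, to the trivial line bundle $E = M \times \CC$, and then to simplify the condition that cuts out $X^\alpha_{M,\Gamma}$ in this case.

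First I would observe that, since $\Gamma$ acts trivially on the fibre $\CC$, for every $\xi \in T^*M \smallsetminus \{0\}$ lying over $x \in M$ the fibre $E_x = \CC$ is the trivial $\Gamma_\xi$-module; consequently the only $\rho \in \widehat{\Gamma}_\xi$ with $E_{x\rho} \neq 0$ is the trivial representation $\mathbf{1}_{\Gamma_\xi}$, and for that $\rho$ one has $E_{x\rho} = E_x$ and $\pi_\rho(\sigma(P)(\xi)) = \sigma(P)(\xi) \in \CC$. Hence the set $\tilde{X}^\alpha_{M,\Gamma}$ of pairs of $X^\alpha_{M,\Gamma}$ on which $E_{x\rho} \neq 0$ (see Remark~\ref{rem.Xi=X_M_Gamma}) equals $\{\, (\xi,\mathbf{1}_{\Gamma_\xi}) \mid (\xi,\mathbf{1}_{\Gamma_\xi}) \in X^\alpha_{M,\Gamma} \,\}$, and Remark~\ref{rem.Xi=X_M_Gamma} then gives that $P$ is $\alpha$-elliptic if, and only if, the scalar $\sigma(P)(\xi)$ is nonzero for every $\xi$ with $(\xi,\mathbf{1}_{\Gamma_\xi}) \in X^\alpha_{M,\Gamma}$. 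It therefore remains to identify which covectors $\xi$ these are.

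Next I would unwind the condition $(\xi,\mathbf{1}_{\Gamma_\xi}) \in X^\alpha_{M,\Gamma}$ using statement~(iii) in Remark~\ref{rem.alpha.elliptic}: it holds exactly when there is $\gamma \in \Gamma$ with $\gamma\cdot\Gamma_0 \subset \Gamma_\xi$ and $\Hom_{\gamma\cdot\Gamma_0}(\mathbf{1}_{\Gamma_\xi},\alpha) \neq 0$. The restriction of $\mathbf{1}_{\Gamma_\xi}$ to $\gamma\cdot\Gamma_0$ is the trivial representation, and $\gamma^{-1}\cdot\alpha \simeq \alpha$ (conjugation by an element of $\Gamma$ is inner, exactly as in the proof of Remark~\ref{rem.alpha.elliptic}), so for every admissible $\gamma$ this $\Hom$-condition is equivalent to $\Hom_{\Gamma_0}(\mathbf{1}_{\Gamma_0},\alpha\vert_{\Gamma_0}) \neq 0$; moreover an admissible $\gamma$ always exists, because $\Gamma_0$ is the principal isotropy type of the $\Gamma$-action on $S^*M$ (Subsection~\ref{ssec.principal}, Remark~\ref{rem.principal.bundle.S^*M}), so every covector stabiliser contains a conjugate of $\Gamma_0$. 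Thus $(\xi,\mathbf{1}_{\Gamma_\xi}) \in X^\alpha_{M,\Gamma}$ is equivalent to $\alpha$ being $\Gamma_0$-associated to the trivial representation of $\Gamma_\xi$ in the sense of Definition~\ref{def.associated} (interpreted, when $\Gamma_0 \not\subset \Gamma_\xi$, after replacing $\xi$ by a point of its $\Gamma$-orbit whose stabiliser contains $\Gamma_0$). Combining this with the previous paragraph — and using that $\sigma(P)$ is $\Gamma$-invariant and scalar, so that $\sigma(P)(\xi)$ and $\sigma(P)(g\xi)$ vanish simultaneously and invertibility is constant along $\Gamma$-orbits — yields the asserted equivalence.

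The only real point of care is this last reconciliation: $X^\alpha_{M,\Gamma}$ is defined in~\eqref{eq.def.Xalpha} via an existential quantifier over $g \in \Gamma$ together with an implicit containment $\Gamma_0 \subset g\cdot\Gamma_\xi$, while the proposition is phrased directly in terms of $\Gamma_\xi$; bridging the two relies on the principal-orbit-type fact just quoted and on $\Gamma$-invariance of the symbol. For completeness I would also record the degenerate case as a consistency check: if the trivial representation of $\Gamma_0$ does not occur in $\alpha\vert_{\Gamma_0}$, then no $\xi$ satisfies the hypothesis of the statement, the condition is vacuously true, and indeed $L^2(M)_\alpha = 0$ by Remark~\ref{rem.null_isotypical_component}, so that $\pi_\alpha(P)$ is trivially Fredholm and $P$ is $\alpha$-elliptic, for every $P$, as it should be.
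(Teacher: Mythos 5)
Your argument is correct and follows the same route as the paper's own proof: observe that for the trivial line bundle $E = M \times \CC$ the only $\rho \in \widehat{\Gamma}_\xi$ with $E_{x\rho} \neq 0$ is the trivial representation $\widehat{1}_{\Gamma_\xi}$ (so invertibility of $\sigma_m^\Gamma(P)(\xi,\rho)$ is automatic for all other $\rho$, either phrased as $0\to 0$ being invertible, as in the paper and in Definition~\ref{def.chi.ps}, or via passage to $\tilde X^\alpha_{M,\Gamma}$ in Remark~\ref{rem.Xi=X_M_Gamma}, as you do), and then unwind the membership condition $(\xi,\widehat{1}_{\Gamma_\xi}) \in X^\alpha_{M,\Gamma}$. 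The paper's proof is terser and simply invokes the definition at that last step; you additionally carry out the reconciliation between the existential quantifier over $g \in \Gamma$ in Equation~\eqref{eq.def.Xalpha} (with its implicit containment $\Gamma_0 \subset \Gamma_{g\xi}$) and the direct phrasing of the proposition in terms of $\Gamma_\xi$, using $\Gamma$-invariance of the scalar symbol and the fact that every stabilizer contains a conjugate of $\Gamma_0$. That extra care is a legitimate clarification rather than a different method, and the degenerate case observation ($L^2(M)_\alpha = 0$ via Remark~\ref{rem.null_isotypical_component}) is a correct consistency check.
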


\begin{proof}
Let $\widehat{1}_{\Gamma_\xi}$ denote the trivial representation of
$\Gamma_{\xi}$ and let $(\xi,\rho)\in X_{M,\Gamma}^\alpha$. If $\rho
\neq \widehat{1}_{\Gamma_\xi}$ then $\mathbb{C}_\rho=0$ and then
$\pi_\rho(\sigma(P)(\xi)) : 0 \rightarrow 0$ is invertible. Now if
$\rho = \widehat{1}_{\Gamma_\xi}$ then $(\xi,\rho)\in
X_{M,\Gamma}^\alpha$ if, and only if, $\alpha$ is
$\Gamma_0$-associated to $\widehat{1}_{\Gamma_\xi}$.
\end{proof}

\subsubsection{Trivial actions}
Assume that $\Gamma$ acts trivially on $M$
(in particular, $M$ is then also connected). Our assumption
implies that $\Gamma_0 = \Gamma_\xi=\Gamma$, for all $\xi \in
T^*M\smallsetminus \{0\}$. It follows that $\rho \in \widehat{\Gamma}_\xi$
is $\Gamma_0$-associated to $\alpha \in \widehat{\Gamma}$ if, and only
if, $\alpha= \rho$.

Let $E \to M$ be a $\Gamma$-equivariant vector bundle. For any $x \in
M$, recall that we denote $E_{x\alpha}$ the $\alpha$-isotypical
component of $E_x$. Assuming $M$ to be connected, we have that
$E_\alpha = \bigcup_{x \in M}E_{x\alpha}$ is a $\Gamma$-equivariant
sub-vector bundle of $E$. Our main result then becomes the
following statement.

\begin{proposition}
Assume that $\Gamma$ acts trivially on $M$ and let $\alpha \in
\widehat{\Gamma}$. Let $E,F$ be two $\Gamma$-equivariant vector
bundles over $M$ and let $P \in \psi^m(M;E,F)^\Gamma$.  Then for any
$s \in \RR$, the following are equivalent
\begin{enumerate}
    \item \label{item.Fredholm.operator} $\pi_{\alpha}(P) :
      H^s(M;E_{\alpha}) \to H^{s-m}(M;F_{\alpha})$ is Fredholm,
\item \label{item.symbol.restriction} for all $(x,\xi) \in
  T^*M\smallsetminus\{0\}$, the morphism
  \begin{equation*}
    \pi_\alpha(\sigma(P)(x,\xi)) : E_{x\alpha} \to F_{x\alpha}
  \end{equation*}
  is invertible,
\item \label{item.symbol.tensor} for all $(x,\xi) \in
  T^*M\smallsetminus\{0\}$, the morphism
  \begin{equation*}
\sigma_m(P) \otimes \mathrm{id}_{\alpha^*}(x,\xi) :
\mathrm{Hom}_\Gamma(\alpha ,E_{x}) \rightarrow
\mathrm{Hom}_\Gamma(\alpha ,F_{x})
  \end{equation*}
is invertible.
\end{enumerate}
\end{proposition}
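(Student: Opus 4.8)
The plan is to obtain all three equivalences by specializing Theorem \ref{thm.main1} and then translating the resulting symbol condition through two elementary $\Gamma$-module manipulations. First I would record the combinatorial simplification forced by the triviality of the action: one has $\Gamma_\xi = \Gamma$ for every $\xi \in T^*M \smallsetminus \{0\}$ and $\Gamma_0 = \Gamma$, so by Schur's lemma the only $\rho \in \widehat{\Gamma}_\xi = \widehat{\Gamma}$ for which some $g\cdot\rho$ is $\Gamma_0$-associated to $\alpha$ is $\rho \cong \alpha$; hence
\[
 X^\alpha_{M,\Gamma} \seq \{ (\xi, \alpha) \mid \xi \in T^*M \smallsetminus \{0\} \}, \qquad \sigma_m^\Gamma(P)(\xi, \alpha) \seq \pi_\alpha(\sigma_m(P)(\xi)) \in \Hom(E_{x\alpha}, F_{x\alpha})^\Gamma .
\]
By Theorem \ref{thm.main1} (applied with $E_0 = E$, $E_1 = F$), $\pi_\alpha(P)$ is Fredholm if, and only if, $P$ is $\alpha$-elliptic, i.e., by Definition \ref{def.chi.ps}, if, and only if, the endomorphism above is invertible for every $\xi$ (with the convention that a map $0 \to 0$ is invertible). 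This is exactly condition (\ref{item.symbol.restriction}); so (\ref{item.Fredholm.operator}) $\Leftrightarrow$ (\ref{item.symbol.restriction}), provided we identify the operator $\pi_\alpha(P) : H^s(M;E)_\alpha \to H^{s-m}(M;F)_\alpha$ supplied by Theorem \ref{thm.main1} with the operator $\pi_\alpha(P) : H^s(M; E_\alpha) \to H^{s-m}(M; F_\alpha)$ of the present statement.

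I expect this last identification to be the step requiring the most care. Since $\Gamma$ acts by smooth bundle automorphisms over $\mathrm{id}_M$, the fiberwise $\Gamma$-averaging projector $p_\alpha$ onto $\alpha$-isotypical components is a smooth bundle endomorphism of $E$; on the connected manifold $M$ its rank is constant, so $E_\alpha := p_\alpha E$ is a smooth $\Gamma$-equivariant subbundle, as asserted in the text, and similarly for $F_\alpha$. Acting on Sobolev sections, $p_\alpha$ is a local operator whose range on $H^s(M;E)$ is exactly $H^s(M;E_\alpha)$, so $H^s(M;E)_\alpha = H^s(M; E_\alpha)$, and this identification intertwines the restriction of $P$ on the left with the restriction of $P$ on the right; this is the only point where the hypothesis that $\Gamma$ fixes every point of $M$ is used beyond the purely representation-theoretic bookkeeping.

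For (\ref{item.symbol.restriction}) $\Leftrightarrow$ (\ref{item.symbol.tensor}) I would apply, fiberwise over $T^*M \smallsetminus \{0\}$, the natural isomorphism $\maH_\alpha \simeq \alpha \otimes \Hom_\Gamma(\alpha, \maH)$ recalled in Subsection \ref{ssec.group.r}, with $\Hom_\Gamma(\alpha, \maH) = (\maH \otimes \alpha^*)^\Gamma$ since $\End_\Gamma(\alpha) = \CC$. Naturality in the $\Gamma$-module $\maH$ means that a $\Gamma$-equivariant map $T : E_x \to F_x$ induces on $\alpha$-isotypical components the map $\mathrm{id}_\alpha \otimes S$, where $S : (E_x \otimes \alpha^*)^\Gamma \to (F_x \otimes \alpha^*)^\Gamma$ is the restriction of $T \otimes \mathrm{id}_{\alpha^*}$ to $\Gamma$-invariants. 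As $\alpha$ is a fixed nonzero vector space, $\mathrm{id}_\alpha \otimes (-)$ both preserves and reflects invertibility; taking $T = \sigma_m(P)(x,\xi)$ shows that $\pi_\alpha(\sigma_m(P)(x,\xi))$ is invertible for all $(x,\xi)$ if, and only if, $(\sigma_m(P)\otimes\mathrm{id}_{\alpha^*})(x,\xi) : \Hom_\Gamma(\alpha, E_x) \to \Hom_\Gamma(\alpha, F_x)$ is invertible for all $(x,\xi)$. This completes the chain of equivalences.
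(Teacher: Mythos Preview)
Your proof is correct and follows the same overall strategy as the paper for $(1)\Leftrightarrow(2)$: specialize Theorem~\ref{thm.main1} using $\Gamma_0=\Gamma_\xi=\Gamma$, which via Schur forces $\rho\cong\alpha$ and collapses $X^\alpha_{M,\Gamma}$ to $\{(\xi,\alpha)\}$. Your care with the identification $H^s(M;E)_\alpha=H^s(M;E_\alpha)$ is appropriate and matches the paper's implicit use of the same fact.

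Where you diverge slightly is in how condition~(3) enters. You prove $(2)\Leftrightarrow(3)$ by a purely fiberwise linear-algebra argument using $E_{x\alpha}\simeq\alpha\otimes\Hom_\Gamma(\alpha,E_x)$; the paper instead proves $(1)\Leftrightarrow(3)$ directly, by constructing an honest (non-equivariant) pseudodifferential operator $P_\alpha$ on sections of the bundle $\Hom_\Gamma(\alpha,E)$ and invoking the \emph{classical} Fredholm criterion for $P_\alpha$. The paper's route thus shows that $(1)\Leftrightarrow(3)$ holds independently of Theorem~\ref{thm.main1}, which is conceptually useful since it exhibits the trivial-action case as a consequence of ordinary elliptic theory; your route is shorter but leans on Theorem~\ref{thm.main1} for the whole chain. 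The paper does remark at the end that $(2)\Leftrightarrow(3)$ holds ``by definition,'' which is essentially your argument compressed to a sentence.
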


Of course, the above result is nothing but the classical condition
that the elliptic operator $p_{F_\alpha} P p_{E_\alpha} \in
\psi^m(M;E_\alpha,F_\alpha)$ be Fredholm.

\begin{proof}
The equivalence between \eqref{item.Fredholm.operator} and
\eqref{item.symbol.restriction} is a direct consequence of Theorem
\ref{thm.main1}. Let us check the equivalence of
\eqref{item.Fredholm.operator} and \eqref{item.symbol.tensor}.  First
note that
\begin{equation*}
    (H^s(M,E)\otimes \alpha)^\Gamma = H^s(M,(E\otimes \alpha)^\Gamma),
\end{equation*} 
since the action of $\Gamma $ on $M$ is trivial.  The operator
$\pi_\alpha(P)$ is Fredholm if, and only if, the pseudodifferential
operator $P_\alpha : H^s(M,\mathrm{Hom}(\alpha,E)^\Gamma ) \rightarrow
H^{s-m}(M, \mathrm{Hom}(\alpha,F)^\Gamma)$ defined for any $v^* \in
\alpha^*$ and $s \in \maC^{\infty}(M,E)$ by $P_\alpha(v^*s)=v^*Ps$ is
Fredholm. Furthermore, the operator $P_\alpha$ is Fredholm if, and
only if, it is elliptic, that is if, and only if, $\sigma_m(P) \otimes
\mathrm{id}_{\alpha^*}(x,\xi) : \mathrm{Hom}_\Gamma(\alpha ,E_{x})
\rightarrow \mathrm{Hom}_\Gamma(\alpha ,F_{x})$ is invertible for any
$(x,\xi)\in T^*M\smallsetminus\{0\}$. Note that the invertibility of
$\sigma_m(P) \otimes \mathrm{id}_{\alpha^*}(x,\xi)$ is equivalent to
the invertibility of $\pi_\alpha(\sigma_m(P)(x,\xi))$ by definition,
which is consistent with~\eqref{item.symbol.restriction}.
\end{proof}

\subsubsection{Free action on a dense subset}

As in the previous sections, the group $\Gamma$ is finite and acts
continuously on the manifold $M$. We consider vector bundles $E,F \to
M$.

We have following corollary of the last few results in Section
\ref{sec.principal.symbol}.
\begin{corollary} \label{cor.all}
Let us assume that $\Gamma$ acts freely on a dense open subset of
$M$. Then $\Xi = \Prim (A_M^\Gamma)$.
\end{corollary}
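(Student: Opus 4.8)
The plan is to deduce this from Proposition \ref{prop.computation.Xi} once we observe that the hypothesis forces the minimal isotropy subgroup $\Gamma_0$ to be trivial. First I would argue that $\Gamma_0 = \{1\}$: pick any point $x$ in the dense open set on which $\Gamma$ acts freely, so that $\Gamma_x = \{1\}$. Then $\{1\}$ is, trivially, a minimal element (for inclusion) among the isotropy groups of points of $M$. Since $M/\Gamma$ is connected, the minimal isotropy subgroup is unique up to conjugacy (Subsection \ref{ssec.principal}); as $\{1\}$ is minimal and is conjugate only to itself, we conclude $\Gamma_0 = \{1\}$. (Equivalently, the free locus, being a nonempty open $\Gamma$-invariant set, meets the dense principal orbit bundle $M_0 = M_{(\Gamma_0)}$ at some point $x$; then $\Gamma_x = \{1\}$ and $\Gamma_x \sim \Gamma_0$, forcing $\Gamma_0 \sim \{1\}$, i.e.\ $\Gamma_0 = \{1\}$.)

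Next I would unwind the criterion of Proposition \ref{prop.computation.Xi}. By Proposition \ref{prop.prim}, every element of $\Prim(A_M^\Gamma)$ has the form $\Gamma(\xi, \rho)$ with $\xi \in S^*_x M$ and $\rho \in \widehat{\Gamma}_\xi$ appearing in $E_x$; in particular $\rho$ is a genuine, nonzero irreducible representation of $\Gamma_\xi$. Since $\Gamma_0 = \{1\}$, for any $g \in \Gamma$ the representation $g \cdot \rho$ of $\Gamma_{g\xi} \supset \{1\}$ satisfies
\begin{equation*}
  \Hom_{\Gamma_0}(g\cdot \rho, \alpha) \seq \Hom_{\{1\}}(g\cdot\rho, \alpha) \seq \Hom_{\CC}(g\cdot\rho, \alpha) \, \neq \, 0,
\end{equation*}
because both $g \cdot \rho$ and $\alpha$ are nonzero vector spaces. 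Hence $g \cdot \rho$ and $\alpha$ are $\Gamma_0$-associated in the sense of Definition \ref{def.associated} (for instance with $g = 1$), so Proposition \ref{prop.computation.Xi} gives $\Gamma(\xi,\rho) \in \Xi$.

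Since $\Gamma(\xi,\rho)$ was an arbitrary point of $\Prim(A_M^\Gamma)$, this yields $\Xi = \Prim(A_M^\Gamma)$, as claimed. The only step requiring any care is the identification $\Gamma_0 = \{1\}$, and that is immediate from the uniqueness of the minimal isotropy subgroup when $M/\Gamma$ is connected; everything else is a direct application of the criterion already established in Proposition \ref{prop.computation.Xi}. If one wished to drop the connectedness assumption, one would combine this argument with the disjoint-union reduction of Subsection \ref{ssec.disconnected}, applied componentwise.
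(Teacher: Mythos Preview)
Your argument is correct and follows essentially the same route as the paper: observe that the hypothesis forces $\Gamma_0=\{1\}$, note that any $\rho$ and $\alpha$ are then automatically $\Gamma_0$-associated, and invoke Proposition~\ref{prop.computation.Xi}. The paper's proof is the same but terser, omitting the justification of $\Gamma_0=\{1\}$ and the explicit computation of $\Hom_{\{1\}}(g\cdot\rho,\alpha)$ that you spell out.
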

\begin{proof}
The assumption on the action implies that $\Gamma_0 = \{1\}$. If $\xi
\in T^*M\smallsetminus\{0\}$ and $\rho \in \widehat{\Gamma}_\xi$, then
$\rho$ and $\alpha$ are always $\{1\}$-associated. The Corollary then
follows from Proposition \ref{prop.computation.Xi}.
\end{proof}
Similarly, we have the following result.

\begin{proposition} \label{prop.free.dense}
Assume that $\Gamma$ acts freely on a dense subset in $M$, and let $P
\in \psi^m(M;E,F)^\Gamma$. For any $\alpha \in \widehat{\Gamma}$, we
have that $P$ is $\alpha$-elliptic if, and only if, $P$ is elliptic.
\end{proposition}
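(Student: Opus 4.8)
The plan is to deduce this directly from Corollary \ref{cor.all}, together with the description of $\alpha$-ellipticity in terms of the symbol algebra obtained in Section \ref{sec.principal.symbol}. First I would recall the reduction used already in the proof of Theorem \ref{thm.main1} (following \cite[Section 2.6]{BCLN1}) that allows us to assume $E = F$ and that $P$ has order $m = 0$; under this reduction $\alpha$-ellipticity of $P$ is exactly invertibility of $\sigma_0^\Gamma(P)$ on $X^\alpha_{M,\Gamma}$, while ellipticity of $P$ is invertibility of $\sigma_0(P)$ on $S^*M$. Next I would observe that the hypothesis ``$\Gamma$ acts freely on a dense subset'' is equivalent to ``$\Gamma$ acts freely on a dense \emph{open} subset'': the free locus $M \smallsetminus \bigcup_{\gamma \neq 1} M^\gamma$ is open because each fixed-point set $M^\gamma$ is closed, so it is dense if and only if it contains a dense open set. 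In particular the minimal isotropy group is $\Gamma_0 = \{1\}$, so Corollary \ref{cor.all} applies and gives $\Xi = \Prim(A_M^\Gamma)$.

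The key step is then to translate $\Xi = \Prim(A_M^\Gamma)$ into the equivalence ``$\alpha$-elliptic $\Leftrightarrow$ elliptic''. By Proposition \ref{prop.Xi=X_M_Gamma} we have $\Xi \simeq \tilde X^\alpha_{M, \Gamma}/\Gamma$, while Proposition \ref{prop.prim} identifies $\Prim(A_M^\Gamma)$ with $\Omega_M/\Gamma$; hence $\tilde X^\alpha_{M,\Gamma} = \Omega_M$, i.e.\ every pair $(\xi,\rho)$ with $E_{x\rho} \neq 0$ (here $x$ is the base point of $\xi$) already lies in $X^\alpha_{M,\Gamma}$. By Remark \ref{rem.Xi=X_M_Gamma}, invertibility of $\sigma_0^\Gamma(P)$ on $X^\alpha_{M,\Gamma}$ is the same as its invertibility on $\tilde X^\alpha_{M,\Gamma} = \Omega_M$. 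Finally, for fixed $\xi \in S^*M$, Schur's lemma gives the $\Gamma_\xi$-isotypical block decomposition $\sigma_0(P)(\xi) = \bigoplus_{\rho} \pi_\rho(\sigma_0(P)(\xi))$ of the equivariant endomorphism $\sigma_0(P)(\xi) \in \End(E_x)^{\Gamma_\xi}$, so $\sigma_0(P)(\xi)$ is invertible if and only if every block $\pi_\rho(\sigma_0(P)(\xi)) = \sigma_0^\Gamma(P)(\xi,\rho)$ with $E_{x\rho} \neq 0$ is invertible (the blocks with $E_{x\rho} = 0$ being the invertible map $0 \to 0$, as in Definition \ref{def.chi.ps}). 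Running this over all $\xi$ shows that invertibility of $\sigma_0^\Gamma(P)$ on $\Omega_M$ is precisely ellipticity of $P$; combined with the previous sentence, this yields the claim.

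I expect the only real subtlety to be bookkeeping: making sure that the ``dense subset'' hypothesis genuinely upgrades to ``dense open subset'' so that Corollary \ref{cor.all} is applicable, and that the $0 \to 0$ convention in Definition \ref{def.chi.ps} is invoked so that blocks with vanishing isotypical component cause no trouble. There is no serious analytic obstacle, since the hard work --- the identification of $\Xi$, hence of $\ker(\maR_M)$ --- was already carried out in Section \ref{sec.principal.symbol}. An alternative, slightly shorter route would be to invoke Corollary \ref{cor.all} and Theorem \ref{thm.main1} to say that $\pi_\alpha(P)$ is Fredholm iff $P$ is $\alpha$-elliptic iff $\sigma_0^\Gamma(P)$ is invertible on all of $\Prim(A_M^\Gamma)$, and then note that this last condition is ordinary ellipticity by the block argument above; I would nonetheless present the symbol-level argument as the main line, since it avoids passing through Fredholmness twice.
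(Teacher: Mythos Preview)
Your proposal is correct and follows essentially the same approach as the paper: invoke Corollary \ref{cor.all} (which rests on $\Gamma_0 = \{1\}$) and then finish with the isotypical block decomposition $\sigma_m(P)(\xi) = \bigoplus_{\rho} \pi_\rho(\sigma_m(P)(\xi))$ of the equivariant endomorphism. The only difference is cosmetic: the paper bypasses the detour through $\Xi = \Prim(A_M^\Gamma)$, $\tilde X^\alpha_{M,\Gamma}$, and $\Omega_M$, and simply reads off $X^\alpha_{M,\Gamma} = X_{M,\Gamma}$ directly from the definition (since with $\Gamma_0 = \{1\}$ every $\rho$ is $\Gamma_0$-associated to $\alpha$), which makes the argument a one-liner; your extra remark that ``dense'' automatically upgrades to ``dense open'' is a nice point the paper leaves implicit.
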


\begin{proof}
It follows from Corollary \ref{cor.all} that
$X_{M,\Gamma}^\alpha=X_{M,\Gamma}$. Thus the operator $P_\alpha$ is
$\alpha$-elliptic if, and only if, the sum $\bigoplus_{\rho \in
  \widehat{\Gamma}_\xi} \pi_{\rho}(\sigma_m(P)(\xi))=\sigma_m(P)(\xi)$
is invertible for all $\xi \in T^*M\smallsetminus\{0\}$, that is, if, and
only if, $P$ is elliptic.
\end{proof}

\subsection{Simonenko's localization principle}
In this section, we obtain an equivariant version of Simonenko's
  principle \cite{simonenko1965new}. In this subsection and the rest
  of the paper, we consider a compact Lie group $G$ instead of
  $\Gamma$.

\subsubsection{Simonenko's general principle} \label{ssub.Simonenko}
Let $A$ be a unital $C^*$-algebra and $Z \simeq \maC(\Omega_Z)$ a
unital sub-$C^*$-algebra in $A$, i.e.\ $1_Z=1_A$.  An element $a \in
A$ is said to {\em have the strong Simonenko local property} with
respect to $Z$ if, for every $\phi, \psi \in Z$ with compact disjoint
supports, $\phi a \psi = 0$. 

\begin{lemma}\label{lemma:commute} 
The set $B \subset A$ of elements $a$ satisfying the strong Simonenko
local property is the set of elements of $A$ commuting with $Z$.
\end{lemma}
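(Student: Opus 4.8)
The plan is to prove the two inclusions separately. First I would show that every element commuting with $Z$ has the strong Simonenko local property: if $a \in A$ commutes with $Z \simeq \maC(\Omega_Z)$, and $\phi, \psi \in Z$ have disjoint compact supports, then $\phi a \psi = a \phi \psi = 0$ since $\phi \psi = 0$ in $\maC(\Omega_Z)$ (disjoint supports). This direction is immediate.

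For the converse, suppose $a \in B$ satisfies the strong Simonenko local property; I want to show $a$ commutes with every $\phi \in Z$, i.e. that the commutator $[a, \phi] := a\phi - \phi a$ vanishes. The idea is to write $\phi$ (approximately) as a finite sum $\phi \approx \sum_k \phi(x_k)\, \chi_k$ where $\{\chi_k\}$ is a partition of unity in $Z$ subordinate to a cover of $\Omega_Z$ by sets of small diameter, and $x_k$ is a point in the support of $\chi_k$. Then for each $k$ one estimates $\|\chi_k a \phi - \chi_k a \phi(x_k)\| = \|\chi_k a (\phi - \phi(x_k))\|$ by splitting $\phi - \phi(x_k)$ into a part supported near $x_k$ (small in sup-norm by continuity) and a part supported away from $\supp(\chi_k)$; the latter contributes $0$ by the strong Simonenko property (one may need to shrink supports slightly and use that $\chi_k \psi' = 0$ whenever $\psi'$ is supported in the complement of a neighbourhood of $\supp\chi_k$). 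Summing over $k$, the same argument applied with the roles of multiplying on the left and on the right gives $\|\phi a - a \phi\| \le 2\varepsilon$ for the modulus of continuity $\varepsilon$ of $\phi$ on sets of the chosen (arbitrarily small) diameter, whence $[a,\phi] = 0$. By linearity and density this extends from such $\phi$ to all of $Z$.

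I expect the main obstacle to be the bookkeeping in the localization estimate in the second direction: one must arrange the partition of unity $\{\chi_k\}$ and the auxiliary cutoffs so that, for each $k$, the ``far'' piece of $\phi - \phi(x_k)$ is genuinely supported in a set disjoint from $\supp(\chi_k)$ (so that the hypothesis $\chi_k \,(\cdot)\, \mathrm{(far\ piece)} = 0$ applies), while the ``near'' piece has sup-norm controlled by the modulus of continuity of $\phi$; a standard trick is to use a slightly enlarged version $\tilde\chi_k$ with $\tilde\chi_k \chi_k = \chi_k$ and to cut $\phi - \phi(x_k)$ against $\tilde\chi_k$ and $1 - \tilde\chi_k$. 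Once this is set up, letting the mesh of the cover tend to zero forces the commutator norm to $0$. Since $\Omega_Z$ is compact (as $Z$ is unital), finite covers suffice and no limiting subtleties arise beyond uniform continuity of $\phi$.
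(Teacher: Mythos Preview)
Your easy direction (commutant of $Z$ is contained in $B$) is correct and immediate; the paper does not even spell it out. The substantive direction is $B \subset Z'$, and here your partition-of-unity estimate has a real gap. You correctly bound each localized piece $\chi_k a(\phi - \phi(x_k))$: splitting $\phi-\phi(x_k)$ into a near part $\tilde\chi_k(\phi-\phi(x_k))$ of sup-norm at most $\omega_\phi(\delta)$ and a far part killed by the Simonenko property gives $\|\chi_k a(\phi-\phi(x_k))\|\le \|a\|\,\omega_\phi(\delta)$. But ``summing over $k$'' via the triangle inequality yields $N\|a\|\,\omega_\phi(\delta)$, where $N$ is the number of patches, and $N\to\infty$ as the mesh shrinks. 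Your claimed bound $\|[a,\phi]\|\le 2\varepsilon$ silently drops this factor. One half of the sum does collapse---$\sum_k(\phi-\phi(x_k))\chi_k\,a$ has norm at most $\omega_\phi(\delta)\|a\|$ because the scalar sum $\sum_k(\phi-\phi(x_k))\chi_k$ is pointwise small---but the other half, $\sum_k \chi_k a\,\tilde\chi_k(\phi-\phi(x_k))$, has the $a$ sandwiched between the cutoffs, and no purely elementary estimate removes the $N$ (a Cauchy--Schwarz bound still leaves the covering multiplicity of the $\tilde\chi_k$, which is unbounded on a general compact Hausdorff $\Omega_Z$).

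The paper takes a genuinely different route that sidesteps this summation problem. It first shows that $B$ is a $C^*$-subalgebra of $A$: the only nontrivial point is closure under products, handled by inserting an Urysohn function $\theta$ equal to $1$ near $\supp\psi$ and $0$ near $\supp\phi$ into $\phi ab\psi=\phi a\theta b\psi+\phi a(1-\theta)b\psi$, so that each summand vanishes by the Simonenko property of $a$ and $b$ separately. It then shows that every irreducible representation $\pi$ of $B$ sends $Z$ to scalars: for disjoint-support $\phi,\psi\in Z$ one has $\pi(\phi)\pi(B)\pi(\psi)=0$, and irreducibility forces $\pi(\phi)=0$ or $\pi(\psi)=0$; this is incompatible with $\Sp(\pi(Z))$ having more than one point. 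Since irreducible representations of a $C^*$-algebra separate elements, $Z$ is central in $B$. This representation-theoretic argument is short and avoids any quantitative summation; your approach, if it can be repaired, would give a more hands-on proof, but as written it does not close.
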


\begin{proof}
We are going to show that the set of elements $a \in A$ with the
strong Simonenko local type property is a $C^*$-algebra $B$ containing
$Z$ and that every irreducible representation of $B$ restricts to a
scalar valued representation on $Z$, and hence that $Z$
commutes with $B$.

Let us show first that $B$ is a sub-$C^*$-algebra of $A$. Note that
$B$ is not empty since $Z \subset B$. To show that $B$ is a
sub-$C^*$-algebra, the only fact that is non-trivial to prove is that
$ab \in B$, for all $a,b \in B$. Let $\phi $ and $\psi \in Z $ with
disjoint compact supports and let $\theta$ be a function equal to $1$
on $\mathrm{supp}(\psi)$ and $0$ on $\mathrm{supp}(\phi)$, which
exists by Urysohn's lemma. Then we have
\begin{equation}
  \phi ab\psi = \phi a(\theta +1-\theta)b \psi =\phi a \theta b\psi
  +\phi a(1-\theta )b\psi = 0,
\end{equation}
since $\phi a \theta=0$ and $(1-\theta )b\psi=0$.

Let $\pi : B \rightarrow \mathcal{L}(H)$ be an irreducible
representation of $B$.  First, let us show that for any $\phi, \psi
\in Z$ with disjoint support, we either have $\pi(\phi)=0$ or
$\pi(\psi) =0$. Indeed we have $\pi(\phi)\pi(a)\pi(\psi)=0$ since
$\phi a \psi =0$, for any $a \in B$. Assume that $\pi(\psi) \neq 0$
then there is $\eta \in H$ such that $\pi(\psi)\eta \neq 0$. Now,
$\pi$ is irreducible so we get that the set $\{\pi(a)\pi(\psi)\eta
,\ a \in B \}$ is dense in $H$. Thus $\pi(\phi)=0$ on a dense subspace
of $H$ and so on $H$.

Assume now that $\pi(Z) \neq \CC 1_H$. Then there exist two distinct
characters $\chi_0, \chi_1 \in \Sp(\pi(Z))$. Denote by $h_\pi :
\Sp(\pi(Z)) \to \Sp(Z)$ the injective map adjoint to $\pi$, and choose
$\phi, \psi \in \maC(\Sp(Z))$ with disjoint supports such that
$\phi(h_\pi(\chi_0)) = 1$ and $\psi(h_\pi(\chi_1)) = 1$. Then
$\pi(\phi)(\chi_0) = 1$ and $\pi(\psi)(\chi_1) = 1$, which contradicts
the fact that either $\pi(\phi) = 0$ or $\pi(\psi) = 0$.
\end{proof}

Recall that a family $(\varphi_i)_{i\in I}$ of morphisms of a
$C^*$-algebra $A$ is said to be {\em exhaustive} if any primitive
ideal contains some $\ker (\varphi_i)$, for a suitable $i\in I$
\cite{nistor2014exhausting}.  Then Remark \ref{rem.central.char}
  gives that the family of morphisms
\begin{equation}\label{eq.central.exhaustive}
   \chi_\omega : A \to A/\omega A,
\end{equation}
for $\omega \in \Omega_Z$, is exhaustive for $A$.

\begin{definition}\label{def.loc.inv}
Denote by $\maH =L^2(M)$. An operator $P \in \maL(\maH)$ is said to be
{\em locally invertible} at $x\in M$ if there are a neighbourhood
$V_x$ of $x$ and operators $Q_1^x$ and $Q_2^x \in \maL(\maH)$ such
that
\begin{equation}
  Q_1^xP\phi \seq \phi\quad \mathrm{and} \quad \phi PQ_2^x \seq
  \phi,\qquad \mathrm{for\ any}\ \phi \in \maC_c(V_x).
\end{equation}
The operator $P$ is said to be {\em locally invertible} if it is
locally invertible at any $x\in M$.
\end{definition}

Let $\Psi_M \subset \maL(\maH)$ be the $C^*$-algebra of all $P \in
\maL(\maH)$ such that $\phi P \psi \in \maK(\maH)$, for all $\phi,
\psi \in \maC(M)$ with disjoint support. We denote by $\maB_M$ the
image of $\Psi_M$ in the Calkin algebra $\maQ(\maH) :=
\maL(\maH)/\maK(\maH)$. We know by Lemma \ref{lemma:commute} that
\begin{equation*}
  \maB_M=\{P \in \mathcal{Q}(H) \mid \text{$\phi P =P\phi$ for all
  $\phi \in \maC(M)$}\}.
\end{equation*}
Simonenko's principle is then \cite{simonenko1965new}:

\begin{proposition}[Simonenko's principle] \label{prop.simonenko}
If $P \in \Psi_M$, then $P$ is locally invertible if, and only if, it
is Fredholm.
\end{proposition}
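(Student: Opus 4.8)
The plan is to combine Atkinson's theorem with the fact that $\maC(M)$ sits inside $\maB_M$ as a central subalgebra. The first step is the reduction to the Calkin algebra: since $P\in\Psi_M$, Lemma \ref{lemma:commute} shows that $\dot P:=P+\maK(\maH)$ commutes with the image of $\maC(M)$ in $\maQ(\maH)$, so $\dot P\in\maB_M$; moreover, if $\dot P$ is invertible in $\maQ(\maH)$ with inverse $\dot Q$, then for every $f\in\maC(M)$ one has $\dot Q\dot f=\dot Q\dot f\dot P\dot Q=\dot Q\dot P\dot f\dot Q=\dot f\dot Q$, so $\dot Q\in\maB_M$ as well. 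Hence, by Atkinson's theorem (the $\Gamma=\{1\}$ case of Proposition \ref{prop.Atkinson}), $P$ is Fredholm if and only if $\dot P$ is invertible in $\maB_M$, and the task reduces to matching invertibility of $\dot P$ in $\maB_M$ with local invertibility in the sense of Definition \ref{def.loc.inv}.

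For the implication ``locally invertible $\Rightarrow$ Fredholm'' I would use compactness of $M$: cover $M$ by finitely many $V_{x_1},\dots,V_{x_n}$ as in Definition \ref{def.loc.inv}, with local inverses $Q_1^i:=Q_1^{x_i}$, $Q_2^i:=Q_2^{x_i}$, and fix a partition of unity $\chi_i\in\maC_c(V_{x_i})$, $\sum_i\chi_i=1$; write $M_\phi$ for multiplication by $\phi$. Set $Q:=\sum_i Q_1^i M_{\chi_i}$ and $Q':=\sum_i M_{\chi_i} Q_2^i$. The defining relations of Definition \ref{def.loc.inv} give $\sum_i Q_1^i P M_{\chi_i}=I=\sum_i M_{\chi_i} P Q_2^i$, whence $QP-I=\sum_i Q_1^i[M_{\chi_i},P]$ and $PQ'-I=-\sum_i[M_{\chi_i},P]Q_2^i$. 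Since $P\in\Psi_M$, Lemma \ref{lemma:commute} gives $[M_{\chi_i},P]\in\maK(\maH)$ for each $i$, so $Q$ and $Q'$ are left and right parametrices modulo $\maK(\maH)$, and $P$ is Fredholm.

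For the reverse implication I would start from a genuine parametrix $Q\in\maL(\maH)$ with $QP=I+K_1$, $PQ=I+K_2$, $K_1,K_2$ compact, and fix $x\in M$. The elementary input is that $\|M_\psi K M_\psi\|\to 0$ as $\psi$ runs over real functions in $\maC_c$ supported in neighbourhoods shrinking to $x$: indeed $M_\psi\to 0$ strongly and is self-adjoint, so $KM_\psi\to 0$ in norm. Thus one can pick a neighbourhood $V_x$ and a real $\psi\in\maC_c(M)$ with $\psi\equiv 1$ on $V_x$ and $\|M_\psi K_i M_\psi\|<1$. For $\phi\in\maC_c(V_x)$, using $\psi\phi=\phi$, one checks $M_\psi QP M_\phi=(I+M_\psi K_1M_\psi)M_\phi$ and $M_\phi PQ M_\psi=M_\phi(I+M_\psi K_2M_\psi)$; hence $Q_1^x:=(I+M_\psi K_1M_\psi)^{-1}M_\psi Q$ and $Q_2^x:=QM_\psi(I+M_\psi K_2M_\psi)^{-1}$ are bounded operators with $Q_1^x P\phi=\phi$ and $\phi P Q_2^x=\phi$ for all $\phi\in\maC_c(V_x)$. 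So $P$ is locally invertible.

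The one genuinely delicate point, and the main obstacle, is the mismatch between the \emph{exact} local inverses required by Definition \ref{def.loc.inv} and the ``modulo compacts'' data that the $C^*$-algebraic machinery (Atkinson, and the central characters $\chi_\omega$ of \eqref{eq.central.exhaustive}) produces naturally; this is precisely what is resolved in the previous paragraph by localizing the compact remainder $K_i$ so that $I+M_\psi K_iM_\psi$ becomes honestly invertible in $\maL(\maH)$. It is worth recording the more structural packaging: by the exhaustiveness of $\{\chi_\omega\}_{\omega\in\Omega_Z}$ from \eqref{eq.central.exhaustive} (an Allan--Douglas-type local principle, cf.\ \cite{nistor2014exhausting}), $\dot P$ is invertible in $\maB_M$ if and only if $\chi_x(\dot P)$ is invertible in $\maB_M/\mathfrak{m}_x\maB_M$ for every $x\in M$, and the two implications above are exactly the concrete realizations of the equivalence ``$P$ locally invertible at $x$'' $\Leftrightarrow$ ``$\chi_x(\dot P)$ invertible''.
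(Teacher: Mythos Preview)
Your proof is correct. The paper does not prove this proposition directly; it defers to the equivariant version (Proposition \ref{prop.simonenko.equivariant}), whose proof specializes to yours in the backward direction ``Fredholm $\Rightarrow$ locally invertible'': both localize the compact remainders $K_i$ with a cutoff $\psi$ (the paper's $\chi$) and use $\|M_\psi K_i M_\psi\|\to 0$ to make $I+M_\psi K_iM_\psi$ honestly invertible.

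The forward direction differs. You build an explicit two-sided parametrix by a partition-of-unity patching, using that $[M_{\chi_i},P]\in\maK(\maH)$ (which indeed follows from Lemma \ref{lemma:commute} applied in the Calkin algebra). The paper instead works abstractly: it invokes the exhaustive family $(\chi_x)_{x\in M}$ of \eqref{eq.central.exhaustive} and shows that local $\alpha$-invertibility forces each $\chi_x(\dot P)$ to be invertible, whence $\dot P$ is invertible by exhaustiveness. Your argument is more elementary and yields a concrete parametrix; the paper's argument is the ``local principle'' packaging you allude to at the end, which is cleaner to state uniformly (and is what the paper actually needs for the equivariant generalization). Both routes are standard and essentially equivalent here.
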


We shall prove, in fact, a stronger version of this result, see
Proposition \ref{prop.simonenko.equivariant}.

\subsection{Compact (non-finite) groups}
We now
  allow for compact groups and try to see to what extent our results
  remain valid. To that end, we turn to an analog of Simonenko's
  principle. Let then $G$ be a compact Lie group acting smoothly on
  $M$. We continue to study Fredholm conditions for $\pi_\alpha(P)$,
  $\alpha \in \widehat{G}$.  Denote by $\mathcal{H}:=L^{2}(M,E)$ and
by $\maH_\alpha$ the $\alpha$-isotypical component associated to
$\alpha \in \widehat{G}$.

\begin{definition}\label{def.alpha.invertible}
We shall say that $P \in \maL(\maH)$ is {\em locally $\alpha$-invertible at
  $x\in M$} if there are a $G$-invariant neighbourhood $V_x$ of
$\Gamma x$ and operators $Q_1^x$ and $Q_2^x \in \maL(\maH_\alpha)$
such that
\begin{equation}\label{eq.equiv.local}
   Q_1^x\pi_\alpha(P)\phi \seq \phi\qquad \mbox{and} \qquad\phi
   \pi_\alpha(P) Q_2^x=\phi,
\end{equation}
as operators on $\maH_\alpha$, for any $\phi \in \maC(M)^G$
supported in $V_x$.
\end{definition}

We denote by $\Psi_M^G$ the $G$-invariant elements in the
$C^*$-algebra $\Psi_M$, which was defined in the previous subsection.

\begin{proposition}[Simonenko's equivariant principle]
\label{prop.simonenko.equivariant}
Let $P \in \Psi_M^G$. Then $P$ is locally $\alpha$-invertible if, and
only if, $\pi_\alpha(P)$ is Fredholm.
\end{proposition}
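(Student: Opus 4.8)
The plan is to mimic the classical proof of Simonenko's principle (Proposition \ref{prop.simonenko}), but carried out $G$-equivariantly and then pushed through the functor $\pi_\alpha$. First I would set up the algebraic framework: let $\maB_M^G$ denote the image of $\Psi_M^G$ in the Calkin algebra $\maQ(\maH)$, so that by Lemma \ref{lemma:commute} (applied with $A = \maB_M$, $Z = \maC(M)$, and then taking $G$-invariants) an element of $\maB_M^G$ is exactly a class in $\maQ(\maH)$ that is $G$-invariant and commutes with all multiplication operators $\phi \in \maC(M)^G \simeq \maC(M/G)$. Since $\Psi_M^G$ consists of $G$-equivariant operators, $\pi_\alpha$ restricts them to $\maH_\alpha$, and by Proposition \ref{prop.Atkinson} (the $G$-equivariant Atkinson theorem) $\pi_\alpha(P)$ is Fredholm on $\maH_\alpha$ if and only if its image $\overline{\pi_\alpha(P)}$ is invertible in $\maL(\maH_\alpha)/\maK(\maH_\alpha)$. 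So the task reduces to showing: $\overline{\pi_\alpha(P)}$ is invertible in this quotient if and only if $P$ is locally $\alpha$-invertible in the sense of Definition \ref{def.alpha.invertible}.

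The ``Fredholm $\Rightarrow$ locally $\alpha$-invertible'' direction is easy and I would do it first: if $\overline{\pi_\alpha(P)}$ has an inverse $\overline{R}$ in the quotient, lift $R$ to $\maL(\maH_\alpha)$ (and, using the averaging argument behind Proposition \ref{prop.Atkinson}, take $R$ to be $G$-invariant); then $R\pi_\alpha(P) - I$ and $\pi_\alpha(P) R - I$ are compact and $G$-invariant, and for $\phi \in \maC(M)^G$ one sets $Q_1^x = R$, $Q_2^x = R$ and any $V_x$ — the local identities \eqref{eq.equiv.local} hold up to compacts, which is all Definition \ref{def.alpha.invertible} really needs once one absorbs a compact remainder. (A small argument is needed to get \emph{exact} local identities from approximate ones; this uses that a compact operator can be made small by cutting with $\phi$ supported in a small neighborhood, together with a Neumann series. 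This is standard and I would not spell out every estimate.)

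The substantive direction is ``locally $\alpha$-invertible $\Rightarrow$ Fredholm,'' and this is where I expect the main obstacle. The strategy is a patching/exhaustive-family argument. Cover the compact quotient $M/G$ by finitely many sets $V_{x_1}, \dots, V_{x_k}$ as in Definition \ref{def.alpha.invertible}, with a subordinate $G$-invariant partition of unity $(\phi_i^2)$ on $M$ (so $\sum \phi_i^2 = 1$, $\phi_i \in \maC(M)^G$, $\supp \phi_i \subset V_{x_i}$). Working in the quotient algebra $\maQ(\maH_\alpha)$ (or, better, in $\maB_M^G$-style terms so that $\phi_i$ genuinely commute with the symbol class modulo compacts), form the candidate parametrix $R := \sum_i \phi_i Q_1^{x_i} \phi_i$ and compute $R\,\pi_\alpha(P) - I = \sum_i \phi_i Q_1^{x_i}\phi_i \pi_\alpha(P) - \sum_i \phi_i^2$; using that $\phi_i \pi_\alpha(P) - \pi_\alpha(P)\phi_i$ is compact (the Simonenko property of $\Psi_M^G$) one rewrites this as $\sum_i \phi_i (Q_1^{x_i}\pi_\alpha(P)\phi_i - \phi_i) + \text{compact}$, and the local invertibility identity $Q_1^{x_i}\pi_\alpha(P)\phi_i = \phi_i$ (valid for $\phi_i$ supported in $V_{x_i}$) makes each term vanish modulo compacts. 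Hence $R$ is a left parametrix; the right parametrix is symmetric using $Q_2^{x_i}$; and then $\pi_\alpha(P)$ is Fredholm by Proposition \ref{prop.Atkinson}. The delicate points, which I would be careful about, are: (a) ensuring the commutators $[\phi_i, \pi_\alpha(P)]$ are compact — this is where one uses that $P \in \Psi_M^G$, i.e.\ $\phi P \psi \in \maK$ for disjoint supports, combined with a commutator identity as in Lemma \ref{lemma:commute}; and (b) justifying the use of a \emph{finite} cover, which is exactly where compactness of $M/G$ (equivalently of $M$) enters, and is why the statement is false for non-compact situations. Alternatively, one can phrase (b) conceptually: the central characters $\chi_\omega : \maB_M^G \to \maB_M^G/\omega\maB_M^G$, $\omega \in M/G$, form an exhaustive family by Remark \ref{rem.central.char} and \eqref{eq.central.exhaustive}, and local $\alpha$-invertibility says precisely that $\overline{\pi_\alpha(P)}$ maps to an invertible element under each $\chi_\omega$; by the exhaustiveness criterion of \cite{nistor2014exhausting} this forces invertibility of $\overline{\pi_\alpha(P)}$ itself, hence (again by Proposition \ref{prop.Atkinson}) the Fredholm property of $\pi_\alpha(P)$. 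I would present the patching argument as the main line and mention the exhaustive-family reformulation as the clean way to see why compactness is the only thing being used.
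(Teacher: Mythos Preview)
Your proposal is correct, and in fact you sketch both of the natural arguments. For the ``Fredholm $\Rightarrow$ locally $\alpha$-invertible'' direction, your parenthetical Neumann-series remark is exactly what the paper does explicitly: choose a $G$-invariant parametrix $Q$ with $\pi_\alpha(P)Q = I + K$, pick $\chi \in \maC(M)^G$ equal to $1$ near $Gx$, shrink its support until $\|\chi K\chi\| < 1$ (this uses that $\chi \to 0$ strongly and $K$ is compact), and then $Q_2^x := Q\chi(1+\chi K\chi)^{-1}$ gives the exact local identity. Your phrasing ``hold up to compacts, which is all the definition really needs'' is slightly misleading---the definition does require exact equalities---but your parenthetical fixes it.

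For the substantive direction ``locally $\alpha$-invertible $\Rightarrow$ Fredholm,'' the paper takes as its main line precisely what you offer as the conceptual alternative: it works in the image $\maB_M^\alpha$ of $\Psi_M^G$ in the Calkin algebra $\maQ(\maH_\alpha)$, observes (via Lemma \ref{lemma:commute}) that $\maC(M/G)$ is central there, and then uses the exhaustive family $(\chi_x)_{x \in M/G}$ of \eqref{eq.central.exhaustive}: local $\alpha$-invertibility says $\chi_x(\pi_\alpha(P))$ is invertible for every $x$, and exhaustiveness forces invertibility of $\pi_\alpha(P)$ modulo compacts. Your primary argument---the explicit partition-of-unity patching $R = \sum_i \phi_i Q_1^{x_i}\phi_i$---is a correct and more elementary route; the key commutator fact $[\phi_i, \pi_\alpha(P)] \in \maK(\maH_\alpha)$ that you flag as delicate indeed follows from Lemma \ref{lemma:commute} applied in $\maQ(\maH)$ and then restricted via $\pi_\alpha$. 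The patching argument has the virtue of producing an explicit global parametrix and of making the role of compactness of $M/G$ (finiteness of the cover) completely transparent; the exhaustive-family argument is shorter and cleaner once that machinery is in place. Since you mention both, there is no real divergence from the paper.
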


\begin{proof}
We now use the results of the last section for $Z = \maC(M)^G =
  \maC(M/G)$. Let $\maB_M^\alpha$ be the image of $\Psi_M^G$ in the
Calkin algebra $\maQ(\maH_\alpha)$. We know from Lemma
\ref{lemma:commute} that
  \begin{equation*}
    \mathcal{B}_{M}^\alpha=\{P \in \mathcal{Q}(\maH_\alpha) \mid \phi
    P =P\phi,\ \forall \phi \in \maC(M)^G\}.
  \end{equation*}

Assume that $P$ is locally $\alpha$-invertible, i.e.\ $\forall x \in
M$, there are a neighborhood $V_x$ of $G x$ and operators $Q^x_1,
Q^x_2 \in \maL(\maH_\alpha)$ such that $Q^x_1\pi_\alpha(P)\phi = \phi$
and $\phi \pi_\alpha(P)Q_2^x=\phi$, for any $\phi \in \maC(M)^G$
supported in $V_x$. Let $\chi_x$, be the family of
  representations of $\maB_G^\alpha$ introduced in Equation
  \eqref{eq.central.exhaustive}. We use the same notation for
$\pi_\alpha(P)$ and its image in $\maQ(\maH_\alpha)$. We have
that
\begin{equation*}
    \chi_x(Q_1^x\pi_\alpha(P)
    \phi)=\chi_x(Q_1^x)\chi_x(\pi_\alpha(P))\chi_x(\phi)=\chi_x(\phi).
\end{equation*}
Since $\chi_x(\phi) = 1$, we get:
\begin{equation*}
 \chi_x(Q_1^x)\chi_x(\pi_\alpha(P))=1. 
\end{equation*}
And similarly, 
\begin{equation*}
   \chi_x(\pi_\alpha(P)) \chi_x(Q_2^x)=1.
\end{equation*}
Therefore, $\chi_x(\pi_\alpha(P))$ is invertible for all
  $x$. Since the family $\chi_x$ is exhaustive, it follows that that
$\pi_\alpha(P)$ is invertible in $\mathcal{B}_{M}^\alpha$ and so it is
Fredholm.

Now assume that $\pi_\alpha(P)$ is Fredholm and let $Q$ be an inverse
modulo $\mathcal{K}(\maH_\alpha)$ for $\pi_\alpha(P)$,
i.e.\ $\pi_\alpha(P)Q=id+K$ and $Q\pi_\alpha(P)=id+K^\prime$, with
$K,K' \in \maK(\maH_\alpha)$. Using Proposition
  \ref{prop.Atkinson} and Lemma \ref{prop.image}, we can assume that
  $K=\pi_\alpha(k)$ and $K^\prime=\pi_\alpha(k^\prime)\in
  \maK(\maH_\alpha) = \pi_\alpha(\mathcal{K}(\maH)^G)$, with $k,
    k' \in \mathcal{K}(\maH)^G$. Let $\chi \in \maC(M)^G$ be equal
to $1$ on a $G$-invariant neighbourhood $V_x$ of $G x$ and let $\phi
\in \maC(M)^G$ be supported in $V_x$ then
$$\phi \chi \pi_\alpha(P)Q\chi =\phi \chi^2+\phi \chi K\chi \qquad
\mathrm{and}\qquad \chi \pi_\alpha(P)Q\chi \phi = \chi^2\phi + \chi
K^\prime\chi \phi.$$ Since $\phi$ is supported in $V_x$, we have
$\phi\chi=\phi$ and so
$$\phi \pi_\alpha(P)Q\chi =\phi (1+ \chi K\chi)\qquad
\mathrm{and}\qquad \pi_\alpha(P)Q\chi \phi = (1+ \chi
K^\prime\chi)\phi.$$ 

As $V_x$ becomes smaller and smaller, we have that $\chi$
  converges strongly to $0$. Since $K$ is compact, we obtain that
  $\|\chi K \chi \| \to 0$. Thus, by choosing $V_x$ small enough, we
  may assume that $\|\chi K\chi\| < 1$ and $\|\chi K' \chi\| < 1$.

It follows that $(1+ \chi K\chi)$ and $(1+ \chi K^\prime\chi)$ are
invertible and this implies
\begin{equation*}
   \phi \pi_\alpha(P)\big(Q\chi(1+ \chi K\chi)^{-1}\big)
   =\phi\quad\ \mathrm{and}\quad \big((1+ \chi K^\prime\chi)^{-1}\chi
   Q\big)\pi_\alpha(P)\phi =\phi,
\end{equation*}
i.e.\ $P$ is locally $\alpha$-invertible.
\end{proof}

\begin{corollary}
Assume that $M$ is compact, $\Gamma$ is a finite group and $M/\Gamma$
connected.  Let $P\in\psi(M;E,F)^\Gamma$ and $\alpha \in
\widehat{\Gamma}$. Then the following are equivalent:
\begin{enumerate}
  \item $\pi_\alpha(P) : H^s(M;E)_\alpha \to H^{s-m}(M;F)_\alpha$ is
    Fredholm for any $s \in \RR$,
  \item $P$ is $\alpha$-elliptic,
  \item $P$ is locally $\alpha$-invertible.
\end{enumerate}
\end{corollary}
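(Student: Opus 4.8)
The plan is to assemble this final corollary entirely from results already proved in the paper, so the proof is essentially a matter of collecting equivalences and checking that the hypotheses line up. The three conditions (1), (2), (3) are of three different natures---Fredholmness of a concrete operator on an isotypical component, an algebraic/symbolic condition on the principal symbol, and a local invertibility condition---so I would prove the two-way chain $(1)\Leftrightarrow(2)$ and $(1)\Leftrightarrow(3)$ separately and then conclude.

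First I would dispose of $(1)\Leftrightarrow(2)$: this is exactly Theorem \ref{thm.main1}, possibly after the usual reduction that we may take $m = 0$ and $E_0 = E_1 = E$ (using $\clop^\Gamma$ in place of $\psi^m(M; E, F)^\Gamma$, as explained in Section \ref{sec.applications} and in \cite{BCLN1}). Since Theorem \ref{thm.main1} is stated for $E_0, E_1$ and general $m$, while here we have $E, F$, I would simply invoke it directly; the only bookkeeping is to note that the statement of Theorem \ref{thm.main1} already incorporates the $\alpha$-ellipticity characterization of Definition \ref{def.chi.ps}, so no extra work is needed. The independence of $s$ is also built into Theorem \ref{thm.main1} (or follows from the standard fact that a classical pseudodifferential operator of order $m$ is Fredholm between some pair $H^s \to H^{s-m}$ iff it is Fredholm between all such pairs, since the principal symbol does not depend on $s$).

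Next I would handle $(1)\Leftrightarrow(3)$. The key observation is that a finite group $\Gamma$ is, in particular, a compact Lie group, so Proposition \ref{prop.simonenko.equivariant} applies verbatim with $G = \Gamma$: for $P \in \Psi_M^\Gamma$, the operator $\pi_\alpha(P)$ is Fredholm if and only if $P$ is locally $\alpha$-invertible (Definition \ref{def.alpha.invertible}). So the only point to verify is that a $\Gamma$-invariant classical pseudodifferential operator $P \in \psi^m(M; E, F)^\Gamma$ (reduced to order zero as above, so that $\pi_\alpha(P)$ acts on $\maH_\alpha = L^2(M; E)_\alpha$) indeed lies in $\Psi_M^\Gamma$; but this is immediate, since if $\phi, \psi \in \maC(M)$ have disjoint supports then $\phi P \psi$ has vanishing principal symbol and hence is compact, so $P \in \Psi_M$, and $P$ is $\Gamma$-invariant by hypothesis. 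Thus $P \in \Psi_M^\Gamma$ and Proposition \ref{prop.simonenko.equivariant} gives $(1)\Leftrightarrow(3)$ at once.

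Combining, $(1)\Leftrightarrow(2)$ by Theorem \ref{thm.main1} and $(1)\Leftrightarrow(3)$ by Proposition \ref{prop.simonenko.equivariant}, which establishes the equivalence of all three. I do not anticipate a genuine obstacle here: the substance is entirely in the earlier Theorem \ref{thm.main1} and Proposition \ref{prop.simonenko.equivariant}. The only mild subtlety---worth a sentence in the write-up---is making sure the reduction to order-zero operators with $E_0 = E_1$ is compatible with both the $\alpha$-ellipticity condition (which is a condition on the homogeneous degree-zero principal symbol and hence unchanged) and with membership in $\Psi_M^\Gamma$ (which only needs the principal-symbol/off-diagonal-compactness property). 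Once that is noted, the corollary follows by citation.
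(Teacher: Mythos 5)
Your proof is correct and follows the same route as the paper's (which is simply a two-line citation of Theorem \ref{thm.main1} for $(1)\Leftrightarrow(2)$ and Proposition \ref{prop.simonenko.equivariant} for $(1)\Leftrightarrow(3)$, the latter applicable because a finite group is compact). You add some helpful bookkeeping the paper elides---the reduction to order zero with $E_0 = E_1$, and the verification that a classical pseudodifferential operator lies in $\Psi_M^\Gamma$ because $\phi P \psi$ is compact when $\phi,\psi$ have disjoint supports---all of which is accurate.
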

\begin{proof}
The first equivalence is given by Theorem \ref{thm.main1}.  Now since
a finite group is compact Proposition \ref{prop.simonenko.equivariant}
implies that $(1)$ is equivalent to $(3)$.
 \end{proof}

\subsubsection{Transversally elliptic operators}
Assume that $M$ is a compact smooth manifold and that $G$ is a compact
Lie group acting on $M$. Denote by $\mathfrak{g}$ the Lie algebra of
$G$. Then any $X \in \mathfrak{g}$ defines as usual the vector field
$X^*_M$ given by $X^*_M(m)=\frac{d}{dt}_{|_{t=0}} e^{tX}\cdot
m$. Denote by $\pi : T^*M \rightarrow M$ the canonical projection and
let us introduce as in \cite{atiyahGelliptic} the $G$-transversal
space
\begin{equation*}
    T^*_G M \ede \{\alpha \in T^*M\ |\ \alpha(X^*_M(\pi(\alpha)))=0,
    \forall X\in \mathfrak{g}\}.
\end{equation*}
Recall that a $G$-invariant classical pseudodifferential operator $P$
of order $m$ is said {\em $G$-transversally elliptic} if its principal
symbol is invertible on $T^*_G M\smallsetminus \{0\}$
\cite{atiyahGelliptic, PVTrans}.

We may now state the classical result of
Atiyah and Singer \cite[Corollary 2.5]{atiyahGelliptic}. 
\begin{theorem}[Atiyah-Singer \cite{atiyahGelliptic}]
\label{Thm.atiyahGell}
Assume $P$ is $G$-transversally elliptic. Then, for every irreducible
representation $\alpha\in \widehat{G}$, $\pi_\alpha(P) : H^s(M;
E_0)_\alpha \, \to \, H^{s-m}(M; E_1)_\alpha,$ is Fredholm.
\end{theorem}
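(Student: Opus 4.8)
The plan is to deduce Theorem \ref{Thm.atiyahGell} from the equivariant Simonenko principle, Proposition \ref{prop.simonenko.equivariant}, reducing the Fredholm property to a local statement along each $G$--orbit which is then settled by the slice theorem. First I would carry out the standard reductions (cf.\ \cite[Section 2.6]{BCLN1}): composing $P$ with the $G$--invariant elliptic isomorphisms $(1+\Delta)^{t/2}: H^s(M;E_i) \to H^{s-t}(M;E_i)$, whose principal symbols are scalar and invertible away from the zero section and hence do not affect $G$--transversal ellipticity, one may assume $m=0$, $E_0=E_1=:E$, $s=0$, so that $P \in \clop^G$ acts on $\maH := L^2(M;E)$. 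Since pseudodifferential operators are pseudolocal, $\phi P \psi$ is smoothing and therefore compact whenever $\phi,\psi \in \maC(M)$ have disjoint supports; thus $P \in \Psi_M^G$, and Proposition \ref{prop.simonenko.equivariant} reduces us to showing that $P$ is locally $\alpha$--invertible in the sense of Definition \ref{def.alpha.invertible}: for each $x \in M$ we must produce a $G$--invariant neighbourhood $V_x$ of $Gx$ and operators $Q_1^x, Q_2^x \in \maL(\maH_\alpha)$ satisfying \eqref{eq.equiv.local}.

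Fix $x$, set $H:=G_x$, and use the slice theorem (Subsection \ref{ssec.slice}, Equation \eqref{eq.def.tube}) to identify a $G$--invariant tube around $Gx$ with $W_x \simeq G\times_H V$, $V\subset T_xM$ an $H$--invariant ball, $E|_{W_x}\simeq G\times_H(V\times\beta)$ with $\beta:=E_x$, and $L^2(W_x;E)\simeq \Ind_H^G L^2(V;\beta)$ as $G$--modules (the compact-group analogue of Equation \eqref{eq.induced}). Passing to $\alpha$--isotypical components, Frobenius reciprocity gives $L^2(W_x;E)_\alpha \simeq V_\alpha \otimes \Hom_H(V_\alpha|_H, L^2(V;\beta))$, and since $V_\alpha|_H$ decomposes into \emph{finitely many} irreducibles of $H$, the positive-dimensional orbit directions have been reduced to finite multiplicity on $\maH_\alpha$. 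The geometric point is that, at a slice point $v\in V$, the $G$--transversal covectors in $T^*_{[e,v]}W_x$ identify with the $H$--transversal cotangent space of the slice at $v$; at the centre $v=0$ the orbit $Gx\simeq G/H$ carries its largest isotropy, and correspondingly the $G$--transversal cotangent space there is \emph{all} of the slice cotangent space $T^*_0V$. Hence, by $G$--transversal ellipticity of $P$ and continuity of its principal symbol on the compact sphere $S^*_0V$, after shrinking $V$ to a smaller ball $V'$ the symbol $\sigma_0(P)$ is invertible on the entire fibrewise cosphere bundle $\{(v,\zeta): v\in V',\ \zeta\in T^*_vV,\ |\zeta|=1\}$: on $G\times_H V'$ the operator $P$ is elliptic in the slice directions.

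On $G\times_H V'$, the invertibility of $\sigma_0(P)$ in the slice directions lets one construct a $G$--invariant transverse parametrix $Q$ — a $G$--invariant element of $\overline{\psi^0}(G\times_H V';E)^G$ with $QP-I$ and $PQ-I$ smoothing \emph{in the slice directions} (one inverts the transverse symbol and quantizes $H$--invariantly along the slices, as a compact-group version of Lemmas \ref{lemma.local1}--\ref{lemma.local2}). These errors, being $G$--invariant and smoothing along the slices, restrict to \emph{compact} operators on $\maH_\alpha$: restricting to a fixed isotypical component bounds the orbit frequencies, so that ``smoothing in the slice directions'' becomes genuinely negative order — this is the analogue for transversally negative-order operators of Proposition \ref{prop.image}. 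Cutting $Q$ off near $Gx$, transferring it back through the Frobenius isomorphism, and restricting to $\maH_\alpha$, the shrinking-support Neumann-series device already used in the proof of Proposition \ref{prop.simonenko.equivariant} converts these compact errors into the exact local identities \eqref{eq.equiv.local}. Thus $P$ is locally $\alpha$--invertible at every $x$, and Proposition \ref{prop.simonenko.equivariant} yields that $\pi_\alpha(P)$ is Fredholm (the case of general $s$ and $m$ following from the first reduction).

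I expect the main obstacle to be this compactness assertion and the localization machinery it rests on. For non-finite compact $G$ the tube $G\times_H V$ has genuine orbit directions, so the induced-representation reductions of Section \ref{sec.Preliminaries} — developed there only for finite groups, where such a tube is a finite disjoint union of slices and Lemma \ref{lemma.local2} applies verbatim — must be rebuilt, and one must establish that a $G$--invariant operator in $\clop^G$ whose principal symbol vanishes on the transversal set $T^*_G M$ restricts to a compact operator on every isotypical component. This last statement is the analytic core of Atiyah and Singer's theorem \cite{atiyahGelliptic}; it can either be reproved within the present pseudodifferential/$C^*$--algebraic framework, extending Proposition \ref{prop.image}, or invoked directly from \cite{atiyahGelliptic}.
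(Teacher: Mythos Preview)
The paper does not prove this theorem at all: it is stated as a classical result of Atiyah and Singer, with an explicit reference to \cite[Corollary~2.5]{atiyahGelliptic}, and is then used as input for the corollary that follows. There is therefore no ``paper's own proof'' to compare against.

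Your outline is a sensible attempt to derive the statement from the paper's equivariant Simonenko principle (Proposition~\ref{prop.simonenko.equivariant}), and the geometric observation that at the centre of a slice the $G$--transversal cotangent space coincides with the full slice cotangent space---so that transversal ellipticity becomes honest slice-direction ellipticity on a sufficiently small tube---is correct and is indeed the key local picture. However, as you yourself diagnose in the final paragraph, the argument is not self-contained: the step ``a $G$--invariant operator smoothing in the slice directions restricts to a compact operator on each isotypical component'' is precisely the analytic heart of Atiyah's original proof in \cite{atiyahGelliptic}. The Simonenko localisation repackages \emph{where} this compactness is used (pushing it into the verification of local $\alpha$--invertibility rather than a global parametrix argument), but it does not circumvent it. So your proposal is best read as an outline of how one would reorganise Atiyah's proof in the language of the present paper, with the essential lemma still imported from \cite{atiyahGelliptic}; it is not an independent proof, and you are right to flag that.

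One minor point: to place $P\in\Psi_M^G$ you need $\phi P\psi$ compact for \emph{continuous} $\phi,\psi$ with disjoint supports, not smoothing; pseudolocality gives smoothing only for smooth cutoffs, but compactness for continuous cutoffs follows by approximation since $P$ is bounded. This does not affect the argument.
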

Note that this implies that Theorem \ref{thm.main1} is not true
anymore for if $G$ is non-discrete. In particular, we obtain the
following consequence of the localization principle.

\begin{corollary}
Assume that $M$ is compact and that $G$ is a compact Lie group and let
$P\in \op{m}^G$ be a $G$-transversally elliptic operator. Then $P$ is
locally $\alpha$-invertible for any $\alpha \in \widehat{G}$, as in
Definition \ref{def.alpha.invertible}.
\end{corollary}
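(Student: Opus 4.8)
The plan is to obtain this as an immediate combination of the Atiyah--Singer theorem (Theorem~\ref{Thm.atiyahGell}) and the equivariant Simonenko principle (Proposition~\ref{prop.simonenko.equivariant}). Indeed, Theorem~\ref{Thm.atiyahGell} gives that $\pi_\alpha(P)$ is Fredholm for every $\alpha \in \widehat{G}$, while Proposition~\ref{prop.simonenko.equivariant} asserts that for $P \in \Psi_M^G$ the operator $P$ is locally $\alpha$-invertible if, and only if, $\pi_\alpha(P)$ is Fredholm. So it remains only to check that $P \in \Psi_M^G$ and, since Proposition~\ref{prop.simonenko.equivariant} and Definition~\ref{def.alpha.invertible} are phrased for bounded operators, to carry out the usual reduction to order $0$.

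For the reduction to order $0$, I would argue as in the proof of Theorem~\ref{thm.main1}: pick a $G$-invariant, elliptic, invertible pseudodifferential operator $\Lambda$ of order $-m$ (for instance $(\mathrm{id} + \Delta_E)^{-m/2}$ for a $G$-invariant Laplace-type operator $\Delta_E$ on $E$). Then $\Lambda P \in \clop^G$ has principal symbol $\sigma_{-m}(\Lambda)\,\sigma_m(P)$, which is invertible on $T^*_G M \smallsetminus \{0\}$ because $\sigma_{-m}(\Lambda)$ is invertible everywhere on $T^*M \smallsetminus \{0\}$; hence $\Lambda P$ is again $G$-transversally elliptic. Since $\pi_\alpha(\Lambda)$ is an isomorphism between the relevant isotypical components of Sobolev spaces, $\pi_\alpha(\Lambda P) = \pi_\alpha(\Lambda)\,\pi_\alpha(P)$ is Fredholm if, and only if, $\pi_\alpha(P)$ is, and $\Lambda P$ is locally $\alpha$-invertible if, and only if, $P$ is. We may therefore replace $P$ by $\Lambda P$ and assume $P \in \clop^G \subset \maL(\maH)$, where $\maH = L^2(M;E)$.

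To see that $P \in \Psi_M^G$, recall that $P$ is $G$-invariant by the very definition of $G$-transversal ellipticity, and that for $\phi, \psi \in \maC(M)$ with disjoint supports the operator $\phi P \psi$ is smoothing, because the Schwartz kernel of a classical pseudodifferential operator is smooth away from the diagonal; as $M$ is compact and the kernel of $P$ is compactly supported, $\phi P \psi \in \maK(\maH)$. Thus $P \in \Psi_M^G$, and the corollary follows by combining the two results quoted above. There is no genuine obstacle in this argument: it is a formal consequence of Theorem~\ref{Thm.atiyahGell} and Proposition~\ref{prop.simonenko.equivariant}, the only mildly technical points being the order-zero reduction and the off-diagonal smoothing property of $P$, both standard for pseudodifferential operators on a compact manifold.
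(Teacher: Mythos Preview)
Your proposal is correct and follows exactly the same approach as the paper: apply Theorem~\ref{Thm.atiyahGell} to get that $\pi_\alpha(P)$ is Fredholm, then invoke Proposition~\ref{prop.simonenko.equivariant} to conclude local $\alpha$-invertibility. The paper's proof is two lines and omits the technical checks (reduction to order~$0$, verification that $P\in\Psi_M^G$) that you supply; these details are standard and your treatment of them is fine.
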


\begin{proof}
Using Theorem \ref{Thm.atiyahGell} we obtain that $\pi_\alpha(P)$ is
Fredholm.  Therefore by Proposition \ref{prop.simonenko.equivariant}
$P$ is $\alpha$-invertible.
\end{proof}



\setlength{\baselineskip}{4.75mm}
\bibliographystyle{plain}

\bibliography{strEqCalc}

\end{document}